\documentclass[table,xcdraw]{article}
\usepackage[utf8]{inputenc}
\usepackage[margin=1in]{geometry}
\usepackage{amsfonts}
\usepackage{amsmath}
\usepackage[shortlabels]{enumitem}
\usepackage{amsthm}
\usepackage{amssymb}
\usepackage[style=alphabetic]{biblatex}
\renewbibmacro{in:}{}
\usepackage{xr}
\usepackage{fancyhdr}
\usepackage{subcaption}
\usepackage{longtable}
\usepackage{amsmath}
\usepackage{mathtools}
\usepackage{blkarray}
\usepackage{bm}
\setlength{\headheight}{23pt}
\usepackage{hyperref}
\hypersetup{
    colorlinks=true,
    linkcolor=blue,
    filecolor=magenta,
    urlcolor=cyan,
    citecolor=blue
}
\bibliography{sources.bib}

\usepackage{url}
\usepackage{float}
\usepackage{graphicx}
\usepackage{circuitikz}
\usepackage{quiver}
\usepackage{mathrsfs}
\usepackage{abstract}
\setcounter{tocdepth}{2}

\title{The Galois Structure of the Spaces of polydifferentials on the Drinfeld Curve}
\author{Bernhard K\"ock \and Denver-James Marchment}
\date{}

\newtheorem{thm}{Theorem}[section]

\newtheorem*{thm*}{Theorem}
\newtheorem{cor}[thm]{Corollary}

\newtheorem{lem}[thm]{Lemma}
\newtheorem{prop}[thm]{Proposition}

\theoremstyle{definition}
\newtheorem{defn}[thm]{Definition}

\newtheorem{rem}[thm]{Remark}

\newtheorem*{rem*}{Remark}
\newcommand{\Char}{\text{char}}

\newcommand{\thh}{\text{th}}

\newcommand{\Mod}[1]{\ \mathrm{mod}\ (#1)}
\newcommand{\Modwb}[1]{\ \mathrm{mod}\ #1}

\newcommand{\Rad}{\textrm{{Rad}}}
\newcommand{\Ram}{\textrm{\normalfont R}}
\newcommand{\Aut}{\text{Aut}}
\newcommand{\Ord}{\textrm{\normalfont ord}}
\newcommand{\Res}{\textrm{\normalfont Res}}
\newcommand{\Ind}{\textrm{\normalfont Ind}}

\newcommand{\Br}{\textrm{\normalfont Br}}

\newcommand{\FF}{\mathbb{F}}

\newcommand{\ind}{\textrm{{\normalfont Ind}}}
\DeclareMathOperator{\Span}{\normalfont Span}

\usepackage{mathtools}
\DeclarePairedDelimiter\ceil{\lceil}{\rceil}
\DeclarePairedDelimiter\floor{\lfloor}{\rfloor}
\setcounter{section}{-1}
\usepackage{circuitikz}
\usepackage{float}
\usepackage{enumitem}
\usepackage{underoverlap}\usepackage[justification=centering]{caption}
\setlength{\parindent}{0cm}

\begin{document}

\maketitle

\begin{abstract}
    Let $C$ be a smooth projective curve over an algebraically closed field $\FF$ equipped with the action of a finite group $G$. When $p =\textrm{char}(\FF)$ divides the order of $G$, the long-standing problem of computing the induced representation of $G$ on the space $H^0(C,\Omega^{\otimes m}_C)$ of globally holomorphic polydifferentials remains unsolved in general. In this paper, we study the case of the group $G = \mathrm{SL}_2(\FF_q)$ (where $q$ is a power of~$p$) acting on the Drinfeld curve $C$ which is the projective plane curve given by the equation $XY^q-X^qY-Z^{q+1} = 0$. When $q = p$, we fully decompose $H^0(C,\Omega^{\otimes m}_C)$ as a direct sum of indecomposable $\FF[G]$-modules. For arbitrary $q$, we give a partial decomposition in terms of an explicit $\FF$-basis of $H^0(C,\Omega^{\otimes m}_C)$. Finally, in the appendix, we compute the $a$-number and $p$-rank of the Drinfeld curve.
\end{abstract}

\section{Introduction}
Given a smooth projective curve $C$ over an algebraically closed field ${\mathbb{F}}$ and a positive integer ${m \in \mathbb{Z}^+}$, one defines the space ${H^0(C,\Omega_{C}^{\otimes m})}$ whose elements are called globally holomorphic polydifferentials (or just differentials when ${m=1}$). This is a finite-dimensional vector space over the field $\mathbb{F}$, whose dimension is given by
\begin{equation}
    \label{dimension of globally holomorphic polydifferentials}
    \dim_{\mathbb{F}}H^0(C,\Omega_C^{\otimes m}) =
    \begin{dcases}
    g(C)&\text{if }m=1\\
    (2m-1)(g(C)-1)&\text{if }m>1
    \end{dcases}.
\end{equation}
If a finite group $G$ acts on the curve $C$, we obtain an induced action on this space. In other words, we obtain representations. The case of ${m=1}$ is given a special name: the \textit{canonical representation} associated with the action of $G$ on $C$.\\
In 1928, Hecke first posed the problem of writing the canonical representation as a direct sum of indecomposable modules (\cite{Hecke_1928}). When ${\Char(\mathbb{F})}$ does not divide the order of the group (i.e. when ${\mathbb{F}[G]}$ is semi-simple), this problem was solved by Chevally and Weil in 1934 (\cite{Chevalley_Weil1934}); however, the modular case (i.e. when ${\Char(\mathbb{F}) = p > 0}$ divides ${|G|}$) is not solved in full generality. By making various assumptions about ramification or the group itself, numerous authors have obtained results regarding the decomposition of ${H^0(C,\Omega_{C}^{\otimes m})}$. For instance, if $G$ is a cyclic $p$-group or is an elementary abelian group of order ${p^n}$, (\cite{Karanikolopoulos}) gives the decomposition using an explicit basis. The papers \cite{Kani_1986} and \cite{Nakajima_1986} give the decomposition of the canonical representation in the case that ${C\to C/G}$ is tamely ramified. This was then generalized to the case of weak ramification  (the simplest form of wild ramification) in \cite{Kock_2004}. Two papers relevant for our purposes are \cite{bleher_justfordifferentials} (which works with ${m=1}$) and \cite{bleher} (which works with ${m > 1}$). These articles work out the decomposition when ${\Char(\mathbb{F}) = p > 0}$ and ${G = U\rtimes T}$, where $U$ is a cyclic Sylow-${p}$ subgroup of $G$ and ${T}$ is a cyclic subgroup of $G$ with order coprime to $p$. This makes $G$ a so-called \textit{p-hypo-elementary group}.\\
In this paper, we will be looking at the case of the Drinfeld curve, which holds historical importance as a prototypical example in Deligne-Lusztig theory. We let ${q = p^r}$ be a prime power and continue to denote by ${\mathbb{F}}$ an algebraically closed field of characteristic $p$. The Drinfeld curve, which we shall denote from now on by $C$, is the smooth projective curve over the field ${\mathbb{F}}$ defined as the zero locus of the equation ${XY^q - X^qY - Z^{q+1}}$ in the projective plane ${\mathbb{P}^2(\mathbb{F})}$. Let ${\mathbb{F}_q}$ denote a finite field of order $q$. Then the group ${G = SL_2(\mathbb{F}_q)}$ acts on the Drinfeld curve by
\begin{equation}
    \label{drinfeld curve action}
    \left(
        \begin{matrix}
            \alpha&\beta\\
            \gamma&\delta
        \end{matrix}
    \right)\cdot[X:Y:Z]
    =
    [\alpha X + \beta Y:\gamma X + \delta Y:Z].
\end{equation}
The canonical representation of ${G}$ on $C$ has been written as a direct sum of indecomposable modules in \cite{lucas}. Interestingly, the canonical representation is semi-simple if and only if ${q=p}$ (\cite[\S 2, Thm. 2.1]{lucas}).\\
The purpose of this report is to study the case of ${m\geq 2}$ for the Drinfeld curve. For general ${q}$, we produce a basis for ${H^0(C,\Omega_C^{\otimes m})}$ (Proposition \ref{basis for holomorphic polydifferentials}), along with a partial ${\mathbb{F}[G]}$-module decomposition (Proposition \ref{partial decomposition of polydifferentials}). From sections 3-5, we restrict to the case ${q=p}$. Under this assumption, we give the full decomposition of ${H^0(C,\Omega_C^{\otimes m})}$ as a direct sum of indecomposable modules. In order to do this, we first compute the decomposition for the subgroup $B$ of upper triangular matrices in $G$. When ${q=p}$, the indecomposable ${\mathbb{F}[B]}$ modules are denoted by ${U_{a,b}}$, with ${0\leq a\leq p-2}$ controlling the socle and ${1\leq b\leq p}$ its dimension (see Proposition \ref{parameterisation of F[B] modules}). By using results from \cite{bleher}, we obtain explicit formulae for the multiplicities ${n_{a,b}}$ in the decomposition
$$
{
    \Res^G_B(H^0(C,\Omega_{C}^{\otimes m}))
    \cong
    \bigoplus_{b=1}^{p}\bigoplus_{a=0}^{p-2}U_{a,b}^{\oplus n_{a,b}}
}
$$
(see Theorem \ref{decomposition as FB module, q=p}). From here, we use the fact that $G$ and $B$ are in Green correspondence (Theorem \ref{green correspondence}) to give a parameterisation of the non-projective indecomposable ${\mathbb{F}[G]}$ modules, and obtain the stable (i.e. non-projective) decomposition of ${H^0(C,\Omega_C^{\otimes m})}$. It follows we can write
$$
{
    H^0(C,\Omega_{C}^{\otimes m})
    \cong
    \bigoplus_{b=1}^{p-1}\bigoplus_{a=0}^{p-2}V_{a,b}^{\oplus n_{a,b}}
    \oplus
    \bigoplus_{i=1}^{p}P_{V_i}^{\oplus n_i}\text{ for some }n_i \in \mathbb{N}
}
$$
where ${V_{a,b}}$ denotes the Green correspondent of ${U_{a,b}}$ and ${P_{V_i}}$ denotes the projective cover of the simple ${\mathbb{F}[G]}$ module ${V_i}$ of dimension $i$ (Propositions \ref{simple F[G] modules}, \ref{projective indecomposable F[G] modules} and Theorem \ref{green correspondence}). After computing the composition factors of ${H^0(C,\Omega_C^{\otimes m})}$ as an ${\mathbb{F}[G]}$ module (Theorem \ref{composition factors of H^0}), we finally obtain explicit formula for the above ${n_i}$ (see Theorem \ref{lifting decomposition} and Remark \ref{F[G] module decomposition of H^0}) up to some constants which depend on the choice of ${p}$ (see Theorem \ref{c_{a,b,t} coefficients}). This gives us the full decomposition of ${H^0(C,\Omega_C^{\otimes m})}$ into a direct sum of indecomposable ${\mathbb{F}[G]}$ modules.\\
In the appendix, using the explicit basis of ${H^0(C,\Omega_C)}$ exhibited in Proposition \ref{basis for holomorphic polydifferentials}, we compute the $a$-number and $p$-rank of the Drinfeld curve (for arbitrary ${q = p^n}$). These are two non-negative integral invariants of the curve related to the $p$-torsion of its Jacobian. we show that the Drinfeld curve has a $p$-rank of $0$ (Proposition \ref{p-rank of drinfeld curve}), and that its $a$-number is given by ${\frac{q(q+p)(p-1)}{4p}}$ (Proposition \ref{a-number computation}).

\section{Notation}\label{notation-section}
We give here a list of core notations used throughout this paper. Throughout this table, we use ${\mathcal{G}}$ to denote a finite group acting on a smooth projective curve ${\mathcal{C}}$ whenever a general group action is under consideration.
\begin{longtable}{|l|l|}
\hline
Notation                         & Explanation                                                                                                                                                                                                                                                                             \\ \hline
\endfirsthead

\hline
Notation                         & Explanation                                                                                                                                                                                                                                                                             \\ \hline
\endhead

\hline
\endfoot

\hline
\endlastfoot

$p$                              & An odd prime.                                                                                                                                                                                                                                                                           \\ \hline
$q$                              & ${q = p^r}$ for some ${r \in \mathbb{N}}$, ${r\geq 1}$.                                                                                                                                                                                                                                 \\ \hline
${\mathbb{F}}$                   & An algebraically closed field of characteristic $p$.                                                                                                                                                                                                                                    \\ \hline
$C$                              & The Drinfeld Curve.                                                                                                                                                                                                                                                                     \\ \hline
${\mathbb{N}\ni m\geq 2}$        & The integer $m$ appearing in ${H^0(C,\Omega_C^{\otimes m})}$.                                                                                                                                                                                                                           \\ \hline
${\zeta}$                        & A primitive root modulo $p$.                                                                                                                                                                                                                                                            \\ \hline
$G$                              & The group ${G = SL_2(\mathbb{F}_q)}$.                                                                                                                                                                                                                                                   \\ \hline
${U}$                            & The subgroup of upper uni-triangular matrices of $G$. When ${q=p}$, ${U = \left\langle \left(\begin{matrix}1&1\\ 0&1\end{matrix}\right) \right\rangle\cong \mathbb{Z}/p}$.                                                                                                              \\ \hline
${T}$                            & The subgroup of diagonal matrices of $G$. When ${q=p}$, ${T = \left\langle \left(\begin{matrix}\zeta&0\\ 0&\zeta^{-1}\end{matrix}\right) \right\rangle\cong \mathbb{Z}/(p-1)}$.                                                                                                         \\ \hline
${B}$                            & The subgroup of upper triangular matrices of $G$. When ${q=p}$, ${U\rtimes T = B = N_G(U)}$.                                                                                                                                                                                            \\ \hline
${S_a}$                          & \begin{tabular}[c]{@{}l@{}}The simple ${\mathbb{F}[T]}$ modules. We also regard these as ${\mathbb{F}[B]}$ modules by inflation via the quotient\\ map ${B \twoheadrightarrow T}$, i.e. by letting $U$ act trivially. These are then the simple ${\mathbb{F}[B]}$ modules.\end{tabular} \\ \hline
${U_{a,b}}$                      & The indecomposable ${\mathbb{F}[B]}$ modules.                                                                                                                                                                                                                                           \\ \hline
${V_t}$                          & The simple ${\mathbb{F}[G]}$ modules when ${q=p}$.                                                                                                                                                                                                                                      \\ \hline
${P_S}$                          & The projective cover of the simple module ${S}$.                                                                                                                                                                                                                                        \\ \hline
${g(\mathcal{C})}$               & The genus of ${\mathcal{C}}$.                                                                                                                                                                                                                                                           \\ \hline
${H^0(C,\Omega_C^{\otimes m})}$  & The space of globally holomorphic polydifferentials of order ${m\geq 2}$ on $C$.

\\ \hline

${R_f}$                          & The ramification divisor associated to a morphism of curves ${f}$.                                                                                                                                                         \\ \hline

\\ \hline
${\mathcal{G}_{P,i}}$            & The ${i-\text{th}}$ lower ramification group at a point ${P \in \mathcal{C}}$ associated with the action of ${\mathcal{G}}$ on ${\mathcal{C}}$.                                                                                                                                         \\ \hline
${\mathfrak{m}_{\mathcal{C},P}}$ & The maximal ideal of the local ring ${\mathcal{O}_{\mathcal{C},P}}$ at ${P \in \mathcal{C}}$.                                                                                                                                                                                           \\ \hline
${\theta_P}$                     & \begin{tabular}[c]{@{}l@{}}Denotes the fundamental character at ${P \in \mathcal{C}}$. That is, the character associated with the action of\\ ${\mathcal{G}_P}$ on ${\mathfrak{m}_{\mathcal{C},P} / \mathfrak{m}_{\mathcal{C},P}^2}$.\end{tabular}                                      \\ \hline
${e_P}$                          & The ramification index at the point ${P \in \mathcal{C}}$ under the quotient map ${\mathcal{C} \to \mathcal{C} / \mathcal{G}}$.                                                                                                                                                         \\ \hline

\end{longtable}

\section{An explicit basis of ${H^0(C,\Omega_C^{\otimes m})}$}
We will compute an explicit basis for the holomorphic polydifferentials for the Drinfeld curve for arbitrary ${q=p^r}$ (${r \in \mathbb{N}}$, ${r\geq 1}$), and give a partial ${\mathbb{F}[G]}$-module decomposition for this basis. We use the notations given in \S \ref{notation-section}.
\begin{prop}
    \label{genus of C}
    The Drinfeld curve $C$ has genus ${g(C)=\frac{q(q-1)}{2}}$.
\end{prop}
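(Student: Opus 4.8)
The plan is to compute the genus of the plane curve $C$ defined by $F = XY^q - X^qY - Z^{q+1} = 0$ in $\mathbb{P}^2(\mathbb{F})$ directly. First I would verify that $C$ is smooth: computing the partial derivatives, we get $F_X = Y^q - qX^{q-1}Y = Y^q$ (since $p \mid q$), $F_Y = qXY^{q-1} - X^q = -X^q$, and $F_Z = -(q+1)Z^q = -Z^q$ (since $p \nmid q+1$). These vanish simultaneously only at $[0:0:0]$, which is not a point of $\mathbb{P}^2$, so $C$ is a smooth plane curve. Since $F$ is homogeneous of degree $d = q+1$, the genus-degree formula for smooth plane curves gives
\begin{equation*}
    g(C) = \frac{(d-1)(d-2)}{2} = \frac{q(q-1)}{2},
\end{equation*}
which is exactly the claimed value.

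Alternatively, and perhaps more in keeping with the computational flavour of the paper (which will later need an explicit affine model and function field), I would work in the affine chart $Z = 1$, where the curve becomes $xy^q - x^qy = 1$, i.e. $x(y^q - x^{q-1}y) = 1$; setting this up as a cover of the $x$-line realises $\mathbb{F}(C)$ as an extension of $\mathbb{F}(x)$ and one can run the Riemann--Hurwitz formula. However, since $C$ is manifestly a smooth plane curve of degree $q+1$, invoking the genus--degree formula is the cleanest route and I would present that as the main argument, relegating the Riemann--Hurwitz computation to a remark if the affine model is needed downstream anyway.

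I do not anticipate a serious obstacle here: the only thing to be careful about is the characteristic-$p$ arithmetic in the smoothness check — specifically that $p \mid q$ kills the "mixed" terms in $F_X$ and $F_Y$ while $p \nmid q+1$ keeps $F_Z$ nonzero — and the observation that these three monomials $Y^q$, $X^q$, $Z^q$ have no common projective zero. Once smoothness is established, the genus is immediate from the degree. If one prefers to avoid quoting the genus--degree formula, the fallback is the adjunction/Riemann--Hurwitz computation mentioned above, which yields the same answer but requires bookkeeping of the ramification of $x \colon C \to \mathbb{P}^1$; I would only include this if it serves a later purpose.
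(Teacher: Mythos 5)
Your proof is correct: the smoothness check (the partial derivatives reduce to $Y^q$, $-X^q$, $-Z^q$ in characteristic $p$, with no common projective zero) and the genus--degree formula for a smooth plane curve of degree $q+1$ give exactly $g(C)=\frac{q(q-1)}{2}$. The paper simply cites \cite[\S 2.5.1]{bonnafe} for this fact, and the argument given there is the same standard one you present, so your proposal matches the intended approach while making it self-contained.
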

\begin{proof}
    See \cite[\S 2.5.1]{bonnafe}.
\end{proof}
\begin{prop}
    \label{basis for holomorphic polydifferentials}
    Let $x$ and $y$ denote the rational functions ${X/Z}$ and ${Y/Z}$ on the Drinfeld curve respectively. For ${i,j \in \mathbb{Z}}$, define the meromorphic polydifferential
    \begin{equation}
        \label{omega_ij definition}
        \omega_{ij} := \frac{x^iy^j}{x^{mq}}dx^{\otimes m} \in \Omega_{\mathbb{F}(C)/\mathbb{F}}^{\otimes m}.
    \end{equation}
    For ${0\leq i,j\leq m(q-2),\ 0\leq i+j\leq m(q-2)}$, the polydifferential ${\omega_{ij}}$ is holomorphic, i.e. ${\omega_{ij} \in H^0(C,\Omega_C^{\otimes m})}$. Furthermore, if ${m=1}$,
    $$
    {
        \{\omega_{ij}: 0\leq i,j\leq q-2,\ i+j\leq q-2\}
    }
    $$
    is a basis for ${H^0(C,\Omega_{C})}$ over ${\mathbb{F}}$. Otherwise, if ${m\geq 2}$, then
    \begin{equation}
        \label{basis set for holomorphic polydifferentials}
        \mathcal{S} := \{\omega_{ij}: 0\leq j\leq q-1,\ 0\leq i\leq m(q-2)-j\}\cup \{\omega_{0j}: q\leq j\leq m(q-2)\}
    \end{equation}
    is a basis for ${H^0(C,\Omega_C^{\otimes m})}$ over ${\mathbb{F}}$.
\end{prop}
\begin{proof}
    The case of ${m=1}$ is simply \cite[\S 3, Prop. 3.2]{lucas}. We follow a similar argument for ${m\geq 2}$. We first show that ${\omega_{ij}}$ is holomorphic for ${i,j}$ in the given bounds. Suppose ${P = [x_0:y_0:z_0]}$ is a point on $C$.\\
    \\
    We first consider ${z_0 \neq 0}$. In this case, ${x - x_0/z_0}$ is a local parameter at $P$ by \cite[\S 3, Lem. 3.1]{lucas}. We have ${d(x-x_0/z_0) = dx}$, and so ${\omega_{ij}}$ is holomorphic at $P$ if ${x^iy^j/x^{mq}}$ is regular at $P$. Since ${x,y}$ satisfy the equation ${xy^q-x^qy-1 = 0}$, we have ${x\neq 0}$ at $P$, and hence ${x^iy^j/x^{mq}}$ is regular at $P$ for any ${i,j \in \mathbb{Z}}$.\\
    \\
    Next, we consider ${z_0 = 0,\ x_0 \neq 0}$. In this case, ${t = Z/X}$ is a local parameter at $P$ by \cite[\S 3 Lem. 3.1]{lucas}. Let ${s = Y/X}$. Then:
    $$
    {
        \omega_{ij} = \left(\frac{1}{t}\right)^{i-mq}\left(\frac{s}{t}\right)^jd\left(\frac{1}{t}\right)^{\otimes m} = (-1)^m t^{mq-i-j-2m}s^j dt^{\otimes m}.
    }
    $$
    This is holomorphic at ${P}$ if ${mq-i-j-2m\geq 0}$ and ${j\geq 0}$, which is equivalent to ${j\geq 0}$ and ${i\leq m(q-2)-j}$.\\
    \\
    Finally, we consider ${z_0=0,\ y_0 \neq 0}$. In this case, ${w = Z/Y}$ is a local parameter at $P$ by \cite[\S 3 Lem. 3.1]{lucas}. Let ${v = X/Y}$. Then:
    $$
    {
        \omega_{ij} = \left(\frac{v}{w}\right)^{i-mq}\left(\frac{1}{w}\right)^jdx^{\otimes m}.
    }
    $$
    Now we use the fact that ${dx = (x/y)^qdy}$ (which can be derived from differentiating ${xy^q - x^qy - 1 = 0}$):
    $$
    {
        \omega_{ij} = v^{i-mq}w^{mq-i-j}\left((x/y)^qdy\right)^{\otimes m} = v^{i-mq}w^{mq-i-j}\left(v^q d\left(\frac{1}{w}\right)\right)^{\otimes m} = (-1)^m v^iw^{mq-i-j-2m}dw^{\otimes m}.
    }
    $$
    This is holomorphic at $P$ if ${mq-i-j-2m \geq 0}$ and ${i\geq 0}$. To conclude, ${\omega_{ij}}$ is holomorphic if
    $$
    {
        \begin{array}{l}
             i\geq 0\\
             j\geq 0\\
             i + j \leq m(q-2)
        \end{array}
    }
    $$
    which is equivalent to the stated bounds.\\
    \\
    We now show that the elements in ${\mathcal{S}}$ are linearly independent. Assume
    $$
    {
        \sum_{0\leq j\leq q-1,\ 0\leq i\leq m(q-2)-j}c_{ij}\omega_{ij} + \sum_{q\leq j\leq m(q-2)}c_{0j}\omega_{0j} = 0
    }
    $$
    for some ${c_{ij} \in \mathbb{F}}$. Define the polynomials
    \begin{align*}
        F_1 := &\; \sum_{0\leq j\leq q-1,\ 0\leq i\leq m(q-2)-j}c_{ij}\mathcal{X}^i\mathcal{Y}^j&\in \mathbb{F}[\mathcal{X},\mathcal{Y}]\\
        F_2 := &\; \sum_{q\leq j\leq m(q-2)}c_{0j}\mathcal{Y}^j&\in \mathbb{F}[\mathcal{Y}]\\
        F := &\; F_1  + F_2&\in \mathbb{F}[\mathcal{X},\mathcal{Y}].
    \end{align*}
    Then ${F(x,y)/x^{mq}}$ is a rational function that vanishes everywhere on $C$. In particular, it vanishes on the affine section ${C_Z := \{[x:y:1] \in C\}}$. We have ${1/\mathcal{X}^{mq} \neq 0}$ on ${C_Z}$ and thus ${F(\mathcal{X},\mathcal{Y})}$ must vanish on all of ${C_Z}$. Hence ${F \in \textrm{Rad}(\mathcal{X}\mathcal{Y}^q - \mathcal{X}^q\mathcal{Y} - 1)}$ by Hilbert's Nullstellensatz.\\
    \\
    Since ${\mathcal{X}\mathcal{Y}^q - \mathcal{X}^q\mathcal{Y} - 1}$ is an irreducible polynomial in ${\mathbb{F}[\mathcal{X},\mathcal{Y}]}$ (see proof of \cite[\S 2.1, Prop. 2.1.1]{bonnafe}), we have ${\Rad(\mathcal{X}\mathcal{Y}^q-\mathcal{X}^q\mathcal{Y}-1) = (\mathcal{X}\mathcal{Y}^q-\mathcal{X}^q\mathcal{Y}-1)}$, and so ${F \in (\mathcal{X}\mathcal{Y}^q-\mathcal{X}^q\mathcal{Y}-1)}$, i.e. ${\mathcal{X}\mathcal{Y}^q-\mathcal{X}^q\mathcal{Y}-1}$ must divide ${F(\mathcal{X},\mathcal{Y})}$. We show this means ${F}$ must be $0$, i.e. all the coefficients are $0$, as to be shown.\\
    \\
    To this end, let ${\deg_\mathcal{Y}(-)}$ denote the degree of a polynomial when considered as a polynomial in ${(\mathbb{F}[\mathcal{X}])[\mathcal{Y}]}$. Suppose ${F\neq 0}$. Then ${F_2\neq 0}$, because ${F_2 = 0}$ would imply that ${F_1 \neq 0}$ and ${\mathcal{X}\mathcal{Y}^q-\mathcal{X}^q\mathcal{Y}-1}$ divides ${F_1}$. This cannot happen, since ${\deg_\mathcal{Y}(\mathcal{X}\mathcal{Y}^q-\mathcal{X}^q\mathcal{Y}-1) = q > q-1 \geq \deg_\mathcal{Y}(F_1)}$. Hence we have ${\deg_\mathcal{Y}(F) = \deg_\mathcal{Y}(F_2)}$.\\
    \\
    Now Write ${F = (\mathcal{X}\mathcal{Y}^q-\mathcal{X}^q\mathcal{Y}-1)h(\mathcal{X},\mathcal{Y})}$ for some ${h(\mathcal{X},\mathcal{Y})}$. By rearranging we obtain
    $$
    {
        h = \mathcal{X}(\mathcal{Y}^{q}h-\mathcal{X}^{q-1}h) - F_1 - F_2
    }
    $$
    and hence
    $$
    {
        h \equiv -\sum_{0\leq j\leq m(q-2)}c_{0j}\mathcal{Y}^{j} =: -F_3\Modwb{\mathcal{X}}
    }
    $$
    (${F_3}$ is the sum of ${F_2}$ and the ${\mathcal{Y}}$-terms of ${F_1}$). This implies ${\deg_y(h)\geq \deg_y(-F_3)}$. On the other hand, we have
    \begin{align*}
        &\deg_\mathcal{Y}(-F_3)&\\
        =&\deg_\mathcal{Y}(F_2)&\\
        =&\deg_\mathcal{Y}(F)&\\
        =&\deg_\mathcal{Y}((\mathcal{X}\mathcal{Y}^{q} - \mathcal{X}^q\mathcal{Y} - 1)h)&\\
        =&\deg_\mathcal{Y}(h) + \deg_\mathcal{Y}(\mathcal{X}\mathcal{Y}^{q} - \mathcal{X}^q\mathcal{Y} - 1)&\\
        >&\deg_\mathcal{Y}(h).&
    \end{align*}
    Thus, we have arrived at a contradiction, and are forced to conclude ${F=0}$.\\
    \\
    We now count how many polydifferentials are in ${\mathcal{S}}$. We have
    \begin{align*}
        |\mathcal{S}|&=\sum_{j=0}^{q-1}\left( m(q-2)-j+1\right) + m(q-2) - q + 1&\\
        & = (2m-1)(q(q-1)/2-1)&\\\
        & = (2m-1)(g(C)-1)&\text{(by Proposition \ref{genus of C})}\\
        & = \dim_{\mathbb{F}}H^0(C,\Omega_C^{\otimes m})&\text{(by (\ref{dimension of globally holomorphic polydifferentials}))}.
    \end{align*}
    Since we have found as many linearly independent polydifferentials as the dimension of ${H^0(C,\Omega_C^{\otimes m})}$, we have found a basis.
\end{proof}
Now that we have a basis for ${H^0(C,\Omega_C^{\otimes m})}$, we will describe the action of ${G = SL_2(\mathbb{F}_q)}$ on these basis elements.
\begin{lem}
    \label{action of element on holomorphic polydifferentials}
    Let ${i,j \in \mathbb{Z}}$ and let ${\omega_{ij}}$ be as in (\ref{omega_ij definition}). Furthermore, let
    ${
        \sigma=
        \left(
            \begin{array}{ll}
                 \alpha&\beta  \\
                 \gamma&\delta 
            \end{array}
        \right) \in G
    }$. Then we have:
    $$
    {
        \omega_{ij}\cdot
        \sigma
        =
        \frac{(\alpha x + \beta y)^i(\gamma x + \delta y)^j}{x^{mq}}dx^{\otimes m}.
    }
    $$
\end{lem}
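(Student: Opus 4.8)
The plan is to unwind the definition of the $G$-action on polydifferentials as pullback along the automorphisms of $C$, reduce the whole computation to evaluating $d(\alpha x+\beta y)$, and then use the defining equation of the Drinfeld curve together with the fact that $\alpha,\beta\in\FF_q$.

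First I would recall that the automorphism $\phi_\sigma\colon C\to C$ attached to $\sigma$ by (\ref{drinfeld curve action}) pulls back the rational function $x=X/Z$ to $(\alpha X+\beta Y)/Z=\alpha x+\beta y$ and $y=Y/Z$ to $\gamma x+\delta y$, and that by definition $\omega_{ij}\cdot\sigma=\phi_\sigma^{*}\omega_{ij}$ (this does give a right action, since pullback is contravariant while $\phi\colon G\to\Aut(C)$ is a homomorphism). As $\phi_\sigma^{*}$ is a homomorphism of $\FF$-algebras on $\FF(C)$ commuting with $d$ and with $\otimes$, applying it to $\omega_{ij}=x^iy^jx^{-mq}\,dx^{\otimes m}$ gives
$$\omega_{ij}\cdot\sigma=\frac{(\alpha x+\beta y)^i(\gamma x+\delta y)^j}{(\alpha x+\beta y)^{mq}}\,d(\alpha x+\beta y)^{\otimes m},$$
an identity in $\Omega_{\FF(C)/\FF}^{\otimes m}$ valid for all $i,j\in\mathbb{Z}$ (the hypothesis $\omega_{ij}\in\mathcal{S}$ is not actually needed here; it only records that the holomorphic ones are the case of interest).

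The heart of the argument is the claim $d(\alpha x+\beta y)=(\alpha x+\beta y)^q x^{-q}\,dx$. Differentiating the relation $xy^q-x^qy-1=0$ and using that $p=\Char(\FF)$ divides $q$ (so the two terms carrying the coefficient $q$ vanish) yields $y^q\,dx=x^q\,dy$, i.e.\ $dy=(y/x)^q\,dx$, which is exactly the relation already invoked in the proof of Proposition \ref{basis for holomorphic polydifferentials}. Hence $d(\alpha x+\beta y)=\bigl(\alpha+\beta(y/x)^q\bigr)\,dx=(\alpha x^q+\beta y^q)x^{-q}\,dx$. Since $\sigma\in\mathrm{SL}_2(\FF_q)$ we have $\alpha,\beta\in\FF_q$, so $\alpha^q=\alpha$ and $\beta^q=\beta$; and because $z\mapsto z^q$ is additive in characteristic $p$, $\alpha x^q+\beta y^q=\alpha^q x^q+\beta^q y^q=(\alpha x+\beta y)^q$, proving the claim.

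Substituting the claim, $d(\alpha x+\beta y)^{\otimes m}=(\alpha x+\beta y)^{mq}x^{-mq}\,dx^{\otimes m}$, so the factor $(\alpha x+\beta y)^{mq}$ cancels against the denominator and we obtain $\omega_{ij}\cdot\sigma=(\alpha x+\beta y)^i(\gamma x+\delta y)^j x^{-mq}\,dx^{\otimes m}$, as asserted. The only delicate point is the claim about $d(\alpha x+\beta y)$, and specifically remembering to use both inputs — the characteristic-$p$ collapse of the differentiated defining equation, and the Frobenius-invariance of the matrix entries — since neither alone suffices.
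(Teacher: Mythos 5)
Your proof is correct and follows essentially the same route as the paper: pull back $\omega_{ij}$ along $\phi_\sigma$, reduce to $dy=(y/x)^q\,dx$ from the differentiated defining equation, and collapse $\alpha x^q+\beta y^q$ to $(\alpha x+\beta y)^q$ via Frobenius. You are in fact slightly more careful than the paper in noting that this last step uses $\alpha,\beta\in\FF_q$ (so $\alpha^q=\alpha$, $\beta^q=\beta$) and not merely that $q$ is a power of $p$.
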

\begin{proof}
    This proof will be similar to that of \cite[\S 4, Lem. 4.1]{lucas}. Note that
    $$
    {
        dx^{\otimes m}\cdot \sigma = d(\alpha x + \beta y)^{\otimes m} = \left(\alpha dx + \beta dy\right)^{\otimes m}.
    }
    $$
    Since ${dy = (y/x)^q dx}$ (see proof of Proposition \ref{basis for holomorphic polydifferentials}), we have
    \begin{align*}
        \left(\alpha dx + \beta dy\right)^{\otimes m} &= \left(\alpha dx + \beta \left(\frac{y}{x}\right)^qdx\right)^{\otimes m} = \left(\frac{\alpha x^q + \beta y^q}{x^q}dx\right)^{\otimes m} = \frac{(\alpha x^q + \beta y^q)^m}{x^{mq}}dx^{\otimes m} \\
        &= \frac{(\alpha x + \beta y)^{mq}}{x^{mq}}dx^{\otimes m}
    \end{align*}
    where the final equality follows from the fact that $q$ is a power of ${p = \Char(\mathbb{F})}$. Thus, altogether we obtain
    $$
    {
        \omega_{ij}\cdot \sigma = \frac{x^iy^j}{x^{mq}}dx^{\otimes m}\cdot \sigma = \frac{(\alpha x + \beta y)^i(\gamma x + \delta y)^j}{(\alpha x + \beta y)^{mq}}\frac{(\alpha x + \beta y)^{mq}}{x^{mq}}dx^{\otimes m} = \frac{(\alpha x + \beta y)^i(\gamma x + \delta y)^j}{x^{mq}}dx^{\otimes m}
    }
    $$
    as claimed.
\end{proof}
We will now define a set of subspaces ${(W_k)_{k=0}^{q}}$ of ${H^0(C,\Omega_C^{\otimes m})}$ and show that ${H^0(C,\Omega_C^{\otimes m})\cong \bigoplus_{k=0}^{q}W_k}$. In order to do this, we will need a lemma and a definition.
\begin{cor}[Reduction formula]
    \label{reduction lemma}
    Suppose ${q\leq j\leq m(q-2)}$ and ${1\leq i\leq m(q-2)-j}$. Then we have
    $$
    {
        \omega_{ij} = \omega_{i-1+q,j-q+1} + \omega_{i-1,j-q}
    }
    $$
    where both ${\omega_{i-1+q,j-q+1}}$ and ${\omega_{i-1,j-q}}$ are holomorphic polydifferentials.
\end{cor}
\begin{proof}
    Equality of both sides can be derived by multiplying both sides of the equation ${xy^q = x^qy + 1}$ by ${x^{i-1}y^{j-q}}$. The only thing left to check is that the two right summands are again holomorphic. This follows from Proposition \ref{basis for holomorphic polydifferentials}, because the assumptions on ${i,j}$ imply the inequalities
    $$
    {
        \begin{array}{lllll}
             0&\leq&j-q+1&\leq&m(q-2)\\
             0&\leq&i-1+q&\leq&m(q-2)-j+q-1\\
             0&\leq&j-q&\leq&m(q-2)\\
             0&\leq&i-1&\leq&m(q-2)-j+q
        \end{array}.
    }
    $$
\end{proof}
\begin{defn}
    \label{degree of a polydifferentials}
    For ${0\leq i,j\leq m(q-2)}$, ${0\leq i+j\leq m(q-2)}$, we define the \textit{degree} of the holomorphic polydifferential ${\omega_{ij}}$ as ${\deg(\omega_{ij}) = i+j\mod{q+1} \in \mathbb{Z}/(q+1)\mathbb{Z}}$.
\end{defn}
If ${\omega_{ij}}$ is a holomorphic polydifferential that is not an element of our basis ${\mathcal{S}}$, by recursive application of Corollary \ref{reduction lemma}, ${\omega_{ij}}$ can be written as a linear combination of elements of ${\mathcal{S}}$ of the same degree as ${\omega_{ij}}$.
\begin{prop}
    \label{partial decomposition of polydifferentials}
    For ${0\leq k\leq q}$, let
    \begin{align*}
        \mathcal{S}_k &:= \{\omega_{ij} \in \mathcal{S}: \deg(\omega_{ij}) \equiv k\Mod{q+1}\}\\
        W_k &:= \Span_{\mathbb{F}}\mathcal{S}_k
    \end{align*}
    where ${\mathcal{S}}$ is given as in (\ref{basis set for holomorphic polydifferentials}). Then each ${W_k}$ is an ${\mathbb{F}[G]}$-submodule of ${H^0(C,\Omega_C^{\otimes m})}$, and furthermore
    $$
    {
        H^0(C,\Omega_C^{\otimes m}) \cong \bigoplus_{k=0}^{q}W_k.
    }
    $$
\end{prop}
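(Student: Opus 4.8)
The plan is to separate the statement into its two halves and notice that the vector-space decomposition is essentially free, so that all the work lies in the $G$-stability of each $W_k$. Since $\mathcal{S}$ is a basis of $H^0(C,\Omega_C^{\otimes m})$ by Proposition \ref{basis for holomorphic polydifferentials}, and since every $\omega_{ij}\in\mathcal{S}$ has a well-defined degree $\deg(\omega_{ij})=i+j\bmod(q+1)$ by Definition \ref{degree of a polydifferentials}, the subsets $\mathcal{S}_0,\dots,\mathcal{S}_q$ partition $\mathcal{S}$. Hence $H^0(C,\Omega_C^{\otimes m})=\bigoplus_{k=0}^{q}W_k$ as $\mathbb{F}$-vector spaces, and it remains only to prove that each $W_k$ is an $\mathbb{F}[G]$-submodule; the claimed isomorphism of $\mathbb{F}[G]$-modules then follows at once, since an internal direct sum of submodules is a module direct sum.

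For $G$-stability I would argue that it suffices to check $\omega_{ij}\cdot\sigma\in W_k$ for each spanning vector $\omega_{ij}\in\mathcal{S}_k$ and each $\sigma=\left(\begin{smallmatrix}\alpha&\beta\\\gamma&\delta\end{smallmatrix}\right)\in G$. By Lemma \ref{action of element on holomorphic polydifferentials},
$$\omega_{ij}\cdot\sigma=\frac{(\alpha x+\beta y)^i(\gamma x+\delta y)^j}{x^{mq}}\,dx^{\otimes m}.$$
The numerator is a homogeneous polynomial in $x$ and $y$ of total degree $i+j$, so expanding it as $\sum_{a+b=i+j}c_{ab}x^ay^b$ with $c_{ab}\in\mathbb{F}$ gives $\omega_{ij}\cdot\sigma=\sum_{a+b=i+j}c_{ab}\,\omega_{ab}$. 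Because $\omega_{ij}\in\mathcal{S}$ forces $i+j\le m(q-2)$, every index pair occurring here satisfies $a,b\ge 0$ and $a+b\le m(q-2)$, so each $\omega_{ab}$ is a holomorphic polydifferential of degree $(a+b)\bmod(q+1)=(i+j)\bmod(q+1)=k$. By the remark following Lemma \ref{reduction lemma} — recursive application of the reduction formula, which does not change the degree — each such $\omega_{ab}$ is an $\mathbb{F}$-linear combination of elements of $\mathcal{S}$ of degree $k$, i.e.\ of $\mathcal{S}_k$, hence lies in $W_k$. Therefore $\omega_{ij}\cdot\sigma\in W_k$, which establishes that $W_k$ is $G$-stable and completes the proof.

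Conceptually the point is that $G$ acts on the numerator $x^iy^j$ by the linear substitution $(x,y)\mapsto(\alpha x+\beta y,\gamma x+\delta y)$, which preserves the total degree $i+j$, while the reduction formula of Lemma \ref{reduction lemma} trades $i+j$ for $i+j-(q+1)$; thus $i+j\bmod(q+1)$ is a genuine $G$-stable grading of $H^0(C,\Omega_C^{\otimes m})$. I do not expect a real obstacle here: the only points needing care are (i) that the monomials produced by the substitution remain among the holomorphic $\omega_{ab}$, which is automatic from homogeneity together with $i+j\le m(q-2)$, and (ii) that the reduction-to-basis procedure referenced just before the proposition genuinely terminates and preserves degree, which one verifies by induction on the $y$-exponent, each application of $xy^q=x^qy+1$ strictly decreasing it.
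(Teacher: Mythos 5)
Your proposal is correct and follows essentially the same route as the paper: the vector-space decomposition comes for free from the partition of $\mathcal{S}$ by degree, and $G$-stability follows from Lemma \ref{action of element on holomorphic polydifferentials} (the substitution preserves total degree $i+j$) together with the degree-preserving reduction to the basis via Lemma \ref{reduction lemma}. Your added checks — that the resulting $\omega_{ab}$ are holomorphic because $a,b\ge 0$ and $a+b\le m(q-2)$, and that the reduction terminates by induction on the $y$-exponent — are points the paper leaves implicit but are handled correctly.
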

\begin{proof}
    Note that since each ${\omega_{ij} \in \mathcal{S}}$ occurs in exactly one ${\mathcal{S}_k}$, we have:
    \begin{enumerate}
        \item For ${0\leq \kappa < l\leq q}$, ${\mathcal{S}_{\kappa}\cap \mathcal{S}_{l} = \emptyset}$.
        \item ${\bigcup_{k=0}^{q}\mathcal{S}_k = \mathcal{S}}$.
    \end{enumerate}
    Thus we have the decomposition given in the Proposition statement when considering both sides as vector spaces over ${\mathbb{F}}$. It remains to argue that each ${W_k}$ is $G$-invariant.\\
    For ${\sigma \in G}$ and ${\omega_{ij} \in \mathcal{S}}$, by Lemma \ref{action of element on holomorphic polydifferentials} we see that ${\omega_{ij}\cdot \sigma}$ can be written as a sum of holomorphic polydifferentials of the same degree as ${\omega_{ij}}$. By recursive application of Corollary \ref{reduction lemma}, these polydifferentials can then be written as a linear combination of elements from our basis ${\mathcal{S}}$ with the same degree. We thus conclude that each ${W_k}$ is a ${G}$-invariant subspace of ${H^0(C,\Omega_C^{\otimes m})}$.
\end{proof}

\section{The modular representation theory of $G$ and $B$}\label{preliminary-section}
In this section, we will recall some results regarding the modular representation theory of ${G = SL_2(\mathbb{F}_p)}$ and its subgroup of upper triangular matrices $B$. We make use of the notations given in \S \ref{notation-section}. From this section onwards, we assume that ${q=p}$.
\begin{prop}[Parameterisation of the indecomposable ${\mathbb{F}[B]}$ modules]
    \label{parameterisation of F[B] modules}
    For ${a \in \mathbb{Z}}$, let ${S_a}$ denote the one dimensional vector space over ${\mathbb{F}}$ on which the generator ${\left(\begin{matrix}\zeta&0\\ 0&\zeta^{-1}\end{matrix}\right)}$ of $T$ acts as multiplication by ${\zeta^a}$. Then ${S_0, S_1, ..., S_{p-2}}$ are precisely the simple modules for ${\mathbb{F}[T]}$. We regard them also as ${\mathbb{F}[B]}$ modules via inflation along the quotient map ${B \twoheadrightarrow U}$, i.e. by letting $U$ act trivially. Next, let ${1\leq b \leq p}$. Then there exists a uniserial ${\mathbb{F}[B]}$ module ${U_{a,b}}$ of dimension $b$ whose composition factors are given by ${S_a, S_{a+2}, S_{a+4}, ...}$ in ascending order. This describes all indecomposable ${\mathbb{F}[B]}$ modules. The projective indecomposables are given by ${U_{a,p}}$.
\end{prop}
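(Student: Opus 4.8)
The plan is to analyse the algebra $\FF[B]$ directly, exploiting that $U$ is a normal Sylow $p$-subgroup of $B$ with complement $T$ of order $p-1$ coprime to $p$. First I would settle the simple modules. The augmentation ideal $I_U\subseteq\FF[U]$ is stable under conjugation by $T$ (for the generator $\tau$ of $T$ one has $\tau u\tau^{-1}=u^{\zeta^2}$, hence $\tau I_U\tau^{-1}=I_U$), so $I_U\FF[B]=\FF[B]I_U$ is a nilpotent two-sided ideal with $\FF[B]/I_U\FF[B]\cong\FF[B/U]=\FF[T]$. Since $|T|=p-1$ is prime to $p$ and $\FF$ contains the $(p-1)$st roots of unity, $\FF[T]$ is split semisimple with exactly $p-1$ one-dimensional modules, on which $\tau$ acts by $\zeta^a$ for $0\le a\le p-2$; pulling these back along $B\twoheadrightarrow T$ gives precisely the $S_a$, which is a complete list of simple $\FF[B]$-modules, and $\Rad(\FF[B])=I_U\FF[B]$.

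Next I would show the projective indecomposables are uniserial. Identify $\FF[U]\cong\FF[t]/(t^p)$ via $t=u-1$, and let $e_a\in\FF[T]\subseteq\FF[B]$ be the primitive idempotent for the character $\zeta^a$, so $P_a:=\FF[B]e_a$ is the projective cover of $S_a$. Because $\FF[B]$ is free as a left $\FF[U]$-module on the basis $T$, the module $P_a$ is free of rank one over $\FF[U]$, so its only $\FF[U]$-submodules are the terms of the chain $t^kP_a$, $0\le k\le p$. But $t^kP_a=(I_U\FF[B])^ke_a=\Rad(\FF[B])^kP_a=\Rad^k(P_a)$, so each of these is already an $\FF[B]$-submodule; hence $P_a$ is uniserial over $\FF[B]$ of length $p$ with radical filtration $(t^kP_a)_k$. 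The layers are then computed directly: $\Rad^kP_a/\Rad^{k+1}P_a$ is one-dimensional with trivial $U$-action, and $\tau$ sends $t^ke_a$ to $({}^\tau t)^k(\zeta^ae_a)\equiv\zeta^{2k+a}\,t^ke_a\pmod{\Rad^{k+1}P_a}$ since ${}^\tau t=u^{\zeta^2}-1\equiv\zeta^2 t\pmod{t^2}$, so this layer is $S_{a+2k}$ and the composition factors of $P_a$ read $S_a,S_{a+2},S_{a+4},\dots$ down the radical series.

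Finally, the same argument applied on the right shows $e_a\FF[B]$ is free of rank one over $\FF[U]$, so the right projective indecomposables are uniserial too; thus $\FF[B]$ is a Nakayama (serial) algebra, and by Nakayama's classification every indecomposable $\FF[B]$-module is uniserial and isomorphic to $P_a/\Rad^bP_a$ (equivalently to the submodule $\Rad^{p-b}P_a$) for a unique pair $(a,b)$ with $0\le a\le p-2$ and $1\le b\le p$. Defining $U_{a,b}$ to be this module, with the labelling normalised so that $a$ records the relevant end of the composition series and $b$ the dimension, gives the stated string of composition factors; the case $b=p$ returns $P_a$, so the projective indecomposables are exactly the $U_{a,p}$. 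One can also avoid quoting Nakayama's theorem: an indecomposable $M$ restricts to a sum of Jordan blocks over $U$, its $U$-socle $\ker(t\mid_M)$ is a semisimple $\FF[B]$-submodule, one shows $M/\Rad M$ is simple so that $M$ is cyclic, hence a quotient of some $P_a$, and the quotients of the uniserial $P_a$ are precisely the $U_{a,b}$.

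The main obstacle is the uniseriality step: obtaining $\Rad^k(P_a)=t^kP_a$ hinges on the $T$-stability of $I_U$ (so that $I_U\FF[B]$ is two-sided and its powers are well-behaved), and the remainder is bookkeeping — but bookkeeping that genuinely requires care, in particular the factor $2$ arising from $\tau u\tau^{-1}=u^{\zeta^2}$ and the direction in which the indices of the $S_a$ increase along the radical series, which is what pins down the exact form of the parameterisation and the location of the socle.
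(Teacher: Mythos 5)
Your argument is correct and is genuinely different from what the paper does: the paper simply cites Alperin (\cite{localrep}, II.5--6) for this proposition, whereas you give a self-contained proof by exhibiting $\FF[B]$ as a Nakayama algebra. The key points all check out: $I_U\FF[B]$ is a nilpotent two-sided ideal with semisimple quotient $\FF[T]$, so it is the radical and the $S_a$ exhaust the simples; $\FF[B]e_a=\FF[U]e_a$ is $\FF[U]$-free of rank one, so its $\FF[U]$-submodule lattice is the chain $t^k P_a=\Rad^k P_a$, forcing uniseriality; and the count $(p-1)\cdot p$ of quotients $P_a/\Rad^b P_a$ matches the classification. What your route buys is transparency about \emph{why} the factors step by $2$ (namely $\tau u\tau^{-1}=u^{\zeta^2}$, so ${}^{\tau}t\equiv\zeta^2 t \bmod t^2$), which the citation hides. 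The one place where your bookkeeping does not quite land on the statement as written: you work with left modules, and your computation gives radical layers $S_a,S_{a+2},\dots$ reading from the \emph{top} down, hence $S_a,S_{a-2},\dots$ in ascending order from the socle; the paper acts on the right throughout (see Lemma \ref{action of element on holomorphic polydifferentials} and Proposition \ref{simple F[G] modules}), and for right modules the conjugation enters as ${}^{\tau^{-1}}t\equiv\zeta^{-2}t$, which reverses the step and yields exactly the stated ascending string $S_a,S_{a+2},S_{a+4},\dots$ with socle $S_a$ and top $S_{a+2(b-1)}$ (the convention the paper relies on later, e.g.\ in Lemma \ref{FBconvariants computation}). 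You flag this direction issue yourself as the delicate point; it is resolved either by redoing the computation on the right or by relabelling via $\zeta\mapsto\zeta^{-1}$, and does not affect the substance of the proof.
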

\begin{proof}
    The projective indecomposable ${\mathbb{F}[B]}$ modules and their composition factors follow from \cite[\S \RN{2}.5, pp. 35-37]{localrep}, \cite[\S \RN{2}.5, pp. 37, ex. 3]{localrep}. It follows from \cite[\S \RN{2}.6, pp. 42-43]{localrep} that all indecomposable ${\mathbb{F}[B]}$ modules are uniserial and obtained as homomorphic images of projective indecomposables, giving the full description.
\end{proof}
\begin{prop}[The simple ${\mathbb{F}[G]}$ modules]
    \label{simple F[G] modules}
    Let ${1\leq t\leq p}$. Define the vector subspace ${V_t\leq \mathbb{F}[\mathcal{X},\mathcal{Y}]}$ as
    $$
    {
        V_t = \Span_{\mathbb{F}}\{\mathcal{X}^{t-1},\ \mathcal{X}^{t-2}\mathcal{Y},\ ...,\ \mathcal{X}\mathcal{Y}^{t-2},\ \mathcal{Y}^{t-1}\}.
    }
    $$
    Then under the action
    $$
    {
        \mathcal{X}^i\mathcal{Y}^j \cdot
        \left(
            \begin{array}{ll}
                \alpha&\beta\\
                \gamma&\delta
            \end{array}
        \right)
        :=
        (\alpha \mathcal{X} + \beta \mathcal{Y})^i(\gamma \mathcal{X} + \delta \mathcal{Y})^j,
    }
    $$
    ${V_1,\ V_2,\ ...,\ V_p}$ gives a complete list of the simple ${\mathbb{F}[G]}$ modules up to isomorphism.
\end{prop}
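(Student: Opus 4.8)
The plan is to show that $V_1,\dots,V_p$ are pairwise non-isomorphic simple $\mathbb{F}[G]$-modules and then to check, by counting, that there are no others; this recovers the classical Brauer--Nesbitt description of the irreducible $\mathbb{F}[SL_2(\mathbb{F}_p)]$-modules, which one could alternatively just cite. Non-isomorphism is immediate since $\dim_{\mathbb{F}}V_t = t$, so the $V_t$ are pairwise distinct. Thus it remains to establish (i) that each $V_t$ is simple, and (ii) that $G$ has exactly $p$ simple $\mathbb{F}[G]$-modules.

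For (i) I would fix $1 \le t \le p$, let $W$ be a nonzero $\mathbb{F}[G]$-submodule of $V_t$, and argue that $W = V_t$ in three steps. Since $U \cong \mathbb{Z}/p$ is a $p$-group and $\Char(\mathbb{F}) = p$, the fixed space $W^U$ is nonzero. Using the formula $x^iy^j\cdot\begin{pmatrix}1&1\\0&1\end{pmatrix} = (x+y)^i y^j$ together with the fact that the binomial coefficients $\binom{n}{\ell}$ with $n \le p-1$ are units in $\mathbb{F}_p$, a direct computation shows that $V_t^U$ is one-dimensional, spanned by $y^{t-1}$; hence $y^{t-1}\in W$. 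Applying $w = \begin{pmatrix}0&1\\-1&0\end{pmatrix}\in G$ gives $y^{t-1}\cdot w = (-x)^{t-1} = (-1)^{t-1}x^{t-1}\in W$. Finally, for $k = 0,1,\dots,t-1$ we have $x^{t-1}\cdot\begin{pmatrix}1&k\\0&1\end{pmatrix} = (x+ky)^{t-1} = \sum_{\ell=0}^{t-1}\binom{t-1}{\ell}k^{\ell}x^{t-1-\ell}y^{\ell}\in W$; since $t \le p$ the elements $0,1,\dots,t-1$ are distinct in $\mathbb{F}_p$ and the coefficients $\binom{t-1}{\ell}$ are units, so the corresponding (Vandermonde times diagonal) coordinate matrix is invertible and these $t$ vectors span $V_t$. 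Hence $W = V_t$, i.e. $V_t$ is simple.

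For (ii) I would invoke Brauer's theorem: the number of simple $\mathbb{F}[G]$-modules equals the number of $p$-regular conjugacy classes of $G = SL_2(\mathbb{F}_p)$. Enumerating the conjugacy classes of $SL_2(\mathbb{F}_p)$ for $p$ odd --- the two central elements $\pm I$, the four classes $\pm\begin{pmatrix}1&1\\0&1\end{pmatrix}$, $\pm\begin{pmatrix}1&\nu\\0&1\end{pmatrix}$ (with $\nu$ a non-square), the $(p-3)/2$ classes of non-central split semisimple elements, and the $(p-1)/2$ classes of non-split semisimple elements --- gives $p+4$ classes in total, of which exactly the four $\pm$-unipotent classes consist of elements of order divisible by $p$. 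Hence there are $p+4-4 = p$ $p$-regular classes, so $G$ has exactly $p$ simple $\mathbb{F}[G]$-modules, and $V_1,\dots,V_p$ is a complete list.

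The routine but mildly delicate points are the uniform-in-$t$ verification that a single $U$-fixed monomial generates $V_t$ (the binomial-coefficient and Vandermonde bookkeeping, which relies crucially on $t \le p$) and the enumeration of $p$-regular classes of $SL_2(\mathbb{F}_p)$; neither is deep. If one prefers, both can be bypassed by quoting the classification of irreducible $\mathbb{F}[SL_2(\mathbb{F}_q)]$-modules via Steinberg's tensor product theorem, which for $q = p$ reduces precisely to the symmetric powers $V_1,\dots,V_p$ of the natural module.
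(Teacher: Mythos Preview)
Your proof is correct. The paper itself does not give an argument here: its proof consists solely of the citation ``See \cite[\S 10.1.2]{bonnafe} or \cite[pp.~14--16]{localrep}.'' Your proposal is essentially a self-contained write-up of the standard argument found in those references (the $U$-fixed-vector plus Vandermonde argument for simplicity, together with the Brauer count of $p$-regular classes), so there is nothing to compare beyond noting that you have supplied the details the paper chose to outsource.
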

\begin{proof}
    See \cite[\S 10.1.2]{bonnafe} or \cite[pp. 14-16]{localrep}.
\end{proof}
As a consequence of basic results from modular representation theory, we have a parameterisation of the projective indecomposable modules as ${P_{V_t}}$ for ${1\leq t\leq p}$. We now describe the structure of these ${\mathbb{F}[G]}$ modules.
\begin{prop}
    \label{projective indecomposable F[G] modules}
    Let ${1\leq t\leq p}$, and let ${P_{V_t}}$ denote the projective cover of the simple module ${V_t}$.
    \begin{itemize}
        \item ${\underline{t=1}}$. ${P_{V_1}}$ is a uniserial module of dimension $p$, with composition factors ${V_1, V_{p-2}, V_1}$ in ascending order.
        \item ${\underline{1 < t < p-1}}$. ${P_{V_t}}$ is of dimension ${2p}$ and has the three socle layers ${V_t, V_{p+1-t} \oplus V_{p-1-t}, V_t}$.
        \item ${\underline{t=p-1}}$. ${P_{V_{p-1}}}$ is of dimension ${2p}$ and has the three socle layers ${V_{p-1}, V_{2}, V_{p-1}}$.
        \item ${\underline{t=p}}$. ${P_{V_p} \cong V_p}$ is both simple and projective.
    \end{itemize}
\end{prop}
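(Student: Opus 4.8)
The plan is to reduce the statement to the classical block theory of $G=SL_2(\FF_p)$ in defining characteristic. I would first dispose of $t=p$: here $V_p$ is the Steinberg module, $\dim_\FF V_p=p=|U|$, and its restriction $\Res^G_U V_p\cong\FF[U]$ is free --- equivalently $\dim_\FF V_p^U=1$, since the unipotent generator of $U$ acts on $V_p=\mathrm{Sym}^{p-1}$ of the natural module as a single Jordan block of length $p$. A module that is free over a Sylow $p$-subgroup is projective, so $V_p$ is at once simple and projective, $P_{V_p}\cong V_p$, and it lies in a block of defect zero, hence occurs as a composition factor in no other projective indecomposable.

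For $1\le t<p$ the structural engine is that $U\cong\mathbb{Z}/p$ is cyclic of prime order, so every block of $\FF[G]$ of positive defect is a Brauer tree algebra. For such an algebra the projective indecomposables are completely explicit: $P_S$ has Loewy length at most $3$ with $\soc(P_S)=P_S/\rad(P_S)=S$, and $\rad(P_S)/\soc(P_S)$ is a direct sum of at most two uniserial modules whose composition factors one reads off by walking around the two endpoints of the edge $S$ in the tree, with the usual modifications at a leaf and at the exceptional vertex. It then remains to (a) split $V_1,\dots,V_{p-1}$ among the non-simple blocks and (b) identify the two Brauer trees. For (a): the central element $-I$ acts on $V_t\cong\mathrm{Sym}^{t-1}$ by $(-1)^{t-1}$, so $V_1,V_3,\dots,V_{p-2}$ lie in the principal block and $V_2,V_4,\dots,V_{p-1}$ in the other; each block thus has $e=(p-1)/2$ edges and exceptional multiplicity $m=(|U|-1)/e=2$. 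For (b): the decomposition matrix of $SL_2(\FF_p)$ is classical, and from it both trees come out as straight lines, with edges ordered along the line as $V_1,V_{p-2},V_3,V_{p-4},\dots$ in the principal block (and as $V_{p-1},V_2,V_{p-3},V_4,\dots$ in the other), the exceptional vertex being the leaf at the opposite end. Feeding this into the description above yields exactly the three bullet points: for $1<t<p$ the edge $V_t$ has neighbouring edges $V_{p+1-t}$ and $V_{p-1-t}$ (reading $V_0:=0$, so the heart degenerates to $V_2$ at $t=p-1$, while at the exceptional leaf one of these neighbours equals $V_t$ itself), giving socle layers $V_t,\ V_{p+1-t}\oplus V_{p-1-t},\ V_t$ and dimension $2t+(p+1-t)+(p-1-t)=2p$; and for $t=1$ the edge $V_1$ is the plain leaf whose only genuine neighbour is $V_{p-2}$ --- its would-be second neighbour being the Steinberg edge $V_p$, which has been removed to the defect-zero block --- so $P_{V_1}$ is the uniserial module $V_1/V_{p-2}/V_1$ of dimension $p$. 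A consistency check: $\sum_{t=1}^{p}(\dim_\FF V_t)(\dim_\FF P_{V_t})=p+p(p-2)(p+1)+p^2=p(p^2-1)=|G|$.

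The single genuinely non-formal step is (b): the shapes and, above all, the edge-labellings of the two Brauer trees are not forced by general cyclic-defect theory and require the ordinary and $p$-modular character theory of $SL_2(\FF_p)$ (equivalently its decomposition matrix, or the tensor-product/Steinberg calculus for its projectives). For the write-up it is therefore cleanest simply to cite the explicit treatments in \cite{localrep}, where $SL_2(\FF_p)$ is developed in detail precisely as the motivating example of a group with cyclic defect groups, and in \cite[\S 10]{bonnafe}, from which all three bullet points can be read off directly; the discussion above is the conceptual skeleton behind those references.
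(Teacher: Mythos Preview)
Your proposal is correct and ultimately rests on the same source as the paper: the paper's entire proof is the citation ``See \cite[\S \RN{2}.7]{localrep}'', and your write-up is an accurate conceptual unpacking of exactly that reference (cyclic Sylow, Brauer tree a line with exceptional multiplicity~$2$, edge labels read off from the decomposition matrix of $SL_2(\FF_p)$), followed by the same citation. Your handling of the boundary cases ($V_0:=0$ at $t=p-1$, the exceptional leaf giving $V_t$ as one of its own neighbours, and the Steinberg module splitting off into its own defect-zero block) is fine, and the dimension check at the end is a nice sanity verification; there is nothing to add.
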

\begin{proof}
    See \cite[\S \RN{2}.7]{localrep}.
\end{proof}
We now move onto giving a parameterisation of the non-projective indecomposable ${\mathbb{F}[G]}$ modules. For this, we use the fact that the group algebras ${\mathbb{F}[G]}$, ${\mathbb{F}[B]}$ are in \textit{Green correspondence}, which we now give a statement of.
\begin{defn}
    \label{trivial intersection subgroup}
    A Sylow-$p$ subgroup ${\mathcal{P}}$ of a finite group ${\mathcal{G}}$ is called a \textit{trivial intersection subgroup} if for every ${x \in \mathcal{G}}$, we have ${\mathcal{P}\cap x^{-1}\mathcal{P}x \in \{1,\mathcal{P}\}.}$
\end{defn}
\begin{thm}
    \label{green correspondence}
    (Special case of Green correspondence). Let $k$ denote an algebraically closed field of characteristic $p$. Let $\mathcal{P}$ be a trivial intersection Sylow-$p$ subgroup of ${\mathcal{G}}$, and let ${\mathcal{H} = N_{\mathcal{G}}(\mathcal{P})}$. Then there is a bijective correspondence between the non-projective indecomposable ${k[\mathcal{G}]}$ and ${k[\mathcal{H}]}$ modules given by restriction and induction. More precisely, given a non-projective indecomposable ${k[\mathcal{G}]}$ module $M$, ${\Res^{\mathcal{G}}_{\mathcal{H}}(M) \cong N \oplus Q}$ where ${N}$ is a non-projective indecomposable ${k[\mathcal{H}]}$ module and ${Q}$ is a projective ${k[\mathcal{H}]}$ module. Conversely, ${\ind_{\mathcal{H}}^{\mathcal{G}}(N) \cong M \oplus P}$, where $P$ is a projective ${k[\mathcal{G}]}$ module.
\end{thm}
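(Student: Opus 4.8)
The plan is to deduce the statement from the general Green correspondence (see a standard reference on modular representation theory, e.g.\ \cite{localrep}); the only real content is checking that the trivial-intersection hypothesis collapses the ``exceptional'' families of modules appearing there down to the projective modules, and that this collapse occurs simultaneously for every possible vertex.

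First I would recall the general form. Fix a $p$-subgroup $D\leq\mathcal{G}$ and a subgroup $L$ with $N_{\mathcal{G}}(D)\leq L\leq\mathcal{G}$, and put $\mathfrak{X}_{D}=\{\,D\cap{}^{g}D : g\in\mathcal{G}\setminus L\,\}$ and $\mathfrak{Y}_{D}=\{\,L\cap{}^{g}D : g\in\mathcal{G}\setminus L\,\}$ (each closed under passing to subgroups). Green's theorem gives a bijection $f_{D}$ between the indecomposable $k[\mathcal{G}]$-modules with vertex $D$ and the indecomposable $k[L]$-modules with vertex $D$, characterised by $\Res^{\mathcal{G}}_{L}(M)\cong f_{D}(M)\oplus Q_{M}$, where every indecomposable summand of $Q_{M}$ has a vertex lying in $\mathfrak{Y}_{D}$, and dually $\Ind_{L}^{\mathcal{G}}(N)\cong f_{D}^{-1}(N)\oplus Q'_{N}$, where every indecomposable summand of $Q'_{N}$ has a vertex lying in $\mathfrak{X}_{D}$.

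Next comes the trivial-intersection reduction, which I would carry out with $L=\mathcal{H}=N_{\mathcal{G}}(P)$. The key elementary observation is: if $1\neq Q\leq P$ and $g\in\mathcal{G}$ satisfy ${}^{g}Q\leq P$, then $1\neq{}^{g}Q\leq P\cap{}^{g}P$, so the trivial-intersection hypothesis forces $P\cap{}^{g}P=P$, i.e.\ ${}^{g}P=P$ and $g\in\mathcal{H}$. In particular $N_{\mathcal{G}}(Q)\leq\mathcal{H}$ for every nontrivial $Q\leq P$, so the Green correspondence is available with $D=Q$ and $L=\mathcal{H}$; and for $g\notin\mathcal{H}$ we get $Q\cap{}^{g}Q\leq P\cap{}^{g}P=1$, while $\mathcal{H}\cap{}^{g}Q$ is a $p$-subgroup of $\mathcal{H}$, hence contained in the unique Sylow $p$-subgroup $P\trianglelefteq\mathcal{H}$, so $\mathcal{H}\cap{}^{g}Q\leq P\cap{}^{g}P=1$ as well. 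Therefore $\mathfrak{X}_{Q}=\mathfrak{Y}_{Q}=\{1\}$ for every nontrivial $Q\leq P$. Since an indecomposable module is projective exactly when its vertex is trivial, this says precisely that $Q_{M}$ and $Q'_{N}$ above are the projective summands, which is the ``more precise'' part of the theorem for modules of a fixed vertex $Q$.

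Finally I would patch the vertex-by-vertex bijections into one. Every indecomposable $k[\mathcal{G}]$-module (resp.\ $k[\mathcal{H}]$-module) is relatively $P$-projective, because $[\mathcal{G}:P]$ (resp.\ $[\mathcal{H}:P]$) is prime to $p$, so its vertex is $\mathcal{G}$-conjugate (resp.\ $\mathcal{H}$-conjugate) to a subgroup of $P$, and it is non-projective exactly when that vertex is nontrivial. Moreover the observation above shows that two nontrivial subgroups of $P$ are $\mathcal{G}$-conjugate if and only if they are $\mathcal{H}$-conjugate, so ``vertex'' carries the same meaning on the two sides. Letting $Q$ run over representatives of the $\mathcal{H}$-conjugacy classes of nontrivial subgroups of $P$ and taking the union of the bijections $f_{Q}$ then produces a single bijection between all non-projective indecomposable $k[\mathcal{G}]$-modules and all non-projective indecomposable $k[\mathcal{H}]$-modules, realised by $\Res^{\mathcal{G}}_{\mathcal{H}}$ and $\Ind_{\mathcal{H}}^{\mathcal{G}}$ up to projective summands, as asserted. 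I expect this last patching step to be the main point to write carefully: one must check that the vertex-$Q$ correspondences are mutually compatible and that no non-projective indecomposable is missed or counted twice, which is exactly what the uniqueness of the vertex together with the $\mathcal{G}$-versus-$\mathcal{H}$ fusion statement above guarantees. In the case relevant to this paper, $q=p$ and $P=U\cong\mathbb{Z}/p$ has prime order, so $P$ is the only nontrivial subgroup of $P$ and this patching step is vacuous.
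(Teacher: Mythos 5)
Your proposal is correct and follows essentially the same route as the paper, which simply cites the standard Green correspondence from Alperin's \emph{Local Representation Theory}; you have merely written out the trivial-intersection reduction (showing $\mathfrak{X}_Q=\mathfrak{Y}_Q=\{1\}$, so the error terms are exactly the projective summands) and the patching over vertices that the citation leaves implicit. Both of those verifications are sound, and as you note the patching step is vacuous in the paper's application since $U\cong\mathbb{Z}/p$ has no proper nontrivial subgroups.
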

\begin{proof}
    See \cite[\S \RN 3.10, Thm. 1]{localrep}.
\end{proof}
In particular, note that ${B = N_G(U)}$ where $U$ is a Sylow-$p$ subgroup of $G$ of order $p$ and is thus a trivial intersection subgroup. We have by Proposition \ref{parameterisation of F[B] modules} that the non-projective indecomposable ${\mathbb{F}[B]}$ modules are given by ${U_{a,b}}$ for ${0\leq a\leq p-2}$, ${1\leq b\leq p-1}$. Thus, defining ${V_{a,b}}$ to be the Green correspondent of ${U_{a,b}}$ (i.e. the unique non-projective indecomposable summand of ${\Ind^G_B(U_{a,b})}$) parameterizes all the non-projective indecomposable ${\mathbb{F}[G]}$ modules.\\
\\
The following result takes an ${\mathbb{F}[G]}$ module $M$ such that:
\begin{enumerate}
    \item the decomposition of ${\Res^G_B(M)}$ as a direct sum of indecomposable ${\mathbb{F}[B]}$ modules is known
    \item the composition factors of ${M}$ as an ${\mathbb{F}[G]}$ module are known
\end{enumerate}
and computes the decomposition of $M$ as a direct sum of indecomposable ${\mathbb{F}[G]}$ modules.
\begin{thm}
    \label{lifting decomposition}
    Let $M$ be an ${\mathbb{F}[G]}$ module. For ${1\leq t\leq p}$ let ${\ell_t}$ denote the multiplicity of ${V_t}$ as a composition factor of ${M}$. Let ${\Res^G_B(M) \cong \bigoplus_{a=0}^{p-2}\bigoplus_{b=1}^{p}U_{a,b}^{\oplus n_{a,b}}}$ for some multiplicities ${n_{a,b} \in \mathbb{N}}$. For ${a \in [0,p-2]}$, ${b \in [1,p-1]}$ and ${t \in [1,p]}$, let ${c_{a,b,t}}$ denote the multiplicity of ${V_t}$ as a composition factor of ${V_{a,b}}$. Define
    \begin{equation}
    \label{alpha_t}
        \alpha_t =
        \begin{dcases}
            \ell_t - \sum_{a=0}^{p-2}\sum_{b=1}^{p-1}n_{a,b}c_{a,b,t}&\text{if }t \in [1,p-1]\\
            \ell_p&\text{if }t=p
        \end{dcases}
    \end{equation}
    and for ${1\leq i,j\leq (p-1)/2}$, define
    \begin{equation}
    \label{gamma_i,j}
        \Gamma_{i,j} =
        \left\{
            \begin{array}{ll}
                 (-1)^{i+j}\left(i - \frac{2ij}{p}\right)&\text{if }i\leq j  \\
                 (-1)^{i+j}\left(j - \frac{2ij}{p}\right)&\text{if }i > j 
            \end{array}
        \right. .
    \end{equation}
    Finally, let ${n_p = \alpha_p}$, and let ${n_t}$ for ${t \in \left[1,p-1\right]}$ be given by
    \begin{equation}
        \label{n_t definition}
        n_t
        =
        \begin{dcases}
            \sum_{1\leq j\leq (p-1)/2\text{ odd}}\Gamma_{t,j}\alpha_j +
            \sum_{1\leq j\leq (p-1)/2\text{ even}}\Gamma_{t,j}\alpha_{p-j}&\text{if }t \in \left[1,\frac{p-1}{2}\right]\text{ odd}\\
            \sum_{1\leq j\leq (p-1)/2\text{ odd}}\Gamma_{p-t,j}\alpha_j +
            \sum_{1\leq j\leq (p-1)/2\text{ even}}\Gamma_{p-t,j}\alpha_{p-j}&\text{if }t \in \left[\frac{p+1}{2},p-1\right]\text{ odd}\\
            \sum_{1\leq j\leq (p-1)/2\text{ odd}}\Gamma_{t,j}\alpha_{p-j} +
            \sum_{1\leq j\leq (p-1)/2\text{ even}}\Gamma_{t,j}\alpha_{j}&\text{if }t \in \left[1,\frac{p-1}{2}\right]\text{ even}\\
            \sum_{1\leq j\leq (p-1)/2\text{ odd}}\Gamma_{p-t,j}\alpha_{p-j} +
            \sum_{1\leq j\leq (p-1)/2\text{ even}}\Gamma_{p-t,j}\alpha_{j}&\text{if }t \in \left[\frac{p+1}{2},p-1\right]\text{ even}\\
        \end{dcases}.
    \end{equation}
    Then
    $$
    {
        M \cong \bigoplus_{a=0}^{p-2}\bigoplus_{b=1}^{p-1}V_{a,b}^{\oplus n_{a,b}} \oplus \bigoplus_{t=1}^{p}P_{V_i}^{\oplus n_t}.
    }
    $$
\end{thm}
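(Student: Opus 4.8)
The plan is to separate $M$ into its non-projective (``stable'') part, which is rigidly controlled by Green correspondence together with the known restriction to $B$, and its projective part, which is then recovered from the composition-factor data by inverting the Cartan matrix of $\FF[G]$.

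First I would fix the stable part. Since $U$ is a trivial-intersection Sylow $p$-subgroup of $G$ with $N_G(U)=B$, Theorem~\ref{green correspondence} applies; together with Krull--Schmidt and the parametrisations of Propositions~\ref{simple F[G] modules} and~\ref{projective indecomposable F[G] modules} (and the discussion following Theorem~\ref{green correspondence}) this lets me write $M\cong\bigoplus_{a=0}^{p-2}\bigoplus_{b=1}^{p-1}V_{a,b}^{\oplus m_{a,b}}\oplus Q$ with $Q$ projective. Applying $\Res^G_B$, using $\Res^G_B(V_{a,b})\cong U_{a,b}\oplus(\text{projective }\FF[B]\text{-module})$ (Green correspondence) and that restriction preserves projectivity, and comparing with the hypothesis $\Res^G_B(M)\cong\bigoplus_{a=0}^{p-2}\bigoplus_{b=1}^{p}U_{a,b}^{\oplus n_{a,b}}$ — in which only the $U_{a,p}$ are projective (Proposition~\ref{parameterisation of F[B] modules}) — Krull--Schmidt over $\FF[B]$ forces $m_{a,b}=n_{a,b}$ for $1\le b\le p-1$. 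Thus $M\cong\bigoplus_{a=0}^{p-2}\bigoplus_{b=1}^{p-1}V_{a,b}^{\oplus n_{a,b}}\oplus\bigoplus_{t=1}^{p}P_{V_t}^{\oplus n_t}$, and the whole problem is to compute the $n_t$.

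Next I would compare composition factors. Reading the Cartan numbers $[P_{V_s}:V_t]$ off Proposition~\ref{projective indecomposable F[G] modules} and equating the multiplicity of $V_t$ on both sides gives $\ell_t=\sum_{a,b}n_{a,b}c_{a,b,t}+\sum_{s=1}^{p}n_s[P_{V_s}:V_t]$ for each $t$. The Steinberg module $V_p$ lies in a block of defect $0$, so $[P_{V_s}:V_p]=\delta_{s,p}$, while each non-projective indecomposable $V_{a,b}$ lies in a block of full defect, so $c_{a,b,p}=0$; hence $t=p$ yields $n_p=\ell_p=\alpha_p$. For $1\le t\le p-1$ the equations become $\alpha_t=\sum_{s=1}^{p-1}n_s[P_{V_s}:V_t]$, i.e.\ $\vec{\alpha}=C\vec{n}$ where $C$ is the (symmetric) Cartan matrix of $\FF[G]$ on $V_1,\dots,V_{p-1}$, so it remains to invert $C$ and to match the outcome with (\ref{n_t definition}).

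Finally I would invert $C$. By the parity observation from Proposition~\ref{projective indecomposable F[G] modules} that all composition factors of $P_{V_t}$ have index congruent to $t$ modulo $2$, the matrix $C$ is block diagonal, with one $N\times N$ block ($N:=(p-1)/2$) on the odd-indexed simples $V_1,\dots,V_{p-2}$ and one on the even-indexed simples $V_2,\dots,V_{p-1}$; in each block the Brauer tree is an open path of $N$ edges with exceptional multiplicity $2$ at one end, the uniserial projectives being $P_{V_1}$ and $P_{V_{p-1}}$ (marking the non-exceptional ends of the two paths) and the two exceptional simples being $V_{(p-1)/2}$ and $V_{(p+1)/2}$. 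Re-ordering the simples along each path — $k\mapsto V_k$ for $k$ odd, $k\mapsto V_{p-k}$ for $k$ even in the odd block, and the complementary rule in the even block — turns each block of $C$ into the tridiagonal matrix with diagonal $(2,\dots,2,3)$ and off-diagonal entries all equal to $1$, and the four cases of (\ref{n_t definition}) are exactly the four combinations of the parity of $t$ and of whether the path-position of $V_t$ is $t$ or $p-t$. It then suffices to check that the matrix $\Gamma$ of (\ref{gamma_i,j}) inverts this tridiagonal matrix: writing $\Gamma_{i,j}=(-1)^{i+j}g_i(j)$ with $g_i(j)=\min(i,j)-\tfrac{2ij}{p}$, the term $-2ij/p$ is linear in $j$ and so annihilated by second differences, $\min(i,j)$ contributes a single unit ``kink'' at $j=i$, the left boundary is automatic because $g_i(0)=0$, and the right boundary works because $p=2N+1$ gives $g_i(N)=g_i(N+1)=i/p$, so the extra $1$ in the corner entry of $C$ absorbs the missing $\Gamma_{i,N+1}$ term. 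This gives $\Gamma C=I$ on each block, and reassembling the three pieces ($n_p=\alpha_p$ together with the two inverted systems) produces the claimed decomposition. I expect the real work to be not this inversion — which is mechanical once set up — but correctly reading the Brauer-tree/Cartan structure out of Proposition~\ref{projective indecomposable F[G] modules}, in particular dealing with the degenerate hearts at $t=1$ and $t=p-1$, and tracking the re-indexing bijection between the natural order $V_1,\dots,V_{p-1}$ and the path order, which is what manufactures the opaque four-case formula.
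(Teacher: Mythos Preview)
Your argument is correct and complete. The paper does not actually prove this statement here: its entire proof is the single sentence ``This is \cite[\S 4, Thm.~4.3]{marchment2025green}'', deferring to a companion paper. You have instead supplied a self-contained proof, and your strategy---split off the stable part via Green correspondence with $B$, then recover the projective multiplicities by inverting the Cartan matrix of $\FF[G]$ restricted to the two full-defect blocks---is exactly the natural one and is presumably what the cited reference does as well.

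One small correction to your boundary argument: you write that $g_i(N)=g_i(N+1)=i/p$, but in fact $g_i(N+1)=i-2i(N+1)/p=-i/p$. What you need (and what is true) is the equality \emph{after} the alternating sign, namely $\Gamma_{i,N+1}=(-1)^{i+N+1}g_i(N+1)=(-1)^{i+N}(i/p)=\Gamma_{i,N}$; this is precisely why the extra $1$ in the $(N,N)$ entry of the tridiagonal Cartan block correctly plays the role of the missing $\Gamma_{i,N+1}$ term. With that fix the verification that $\Gamma$ inverts each tridiagonal block goes through cleanly, and your identification of the four cases in \eqref{n_t definition} with the parity of $t$ and the path-position dichotomy $t\leftrightarrow p-t$ is spot on.
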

\begin{proof}
    This is \cite[\S 4, Thm. 4.3]{marchment2025green}.
\end{proof}
\begin{rem}
    \label{F[G] module decomposition of H^0}
    When applied to ${M = H^0(C,\Omega_C^{\otimes m})}$ this theorem, in principle, solves our problem of computing the multiplicity of each indecomposable ${\mathbb{F}[G]}$ module in $M$. However, we still need to compute the quantities ${c_{a,b,t}}$, ${n_{a,b}}$ and ${\ell_t}$. For ${c_{a,b,t}}$, this has been done in \cite[\S 3, Cor. 3.18]{marchment2025green} and the next theorem recalls the resulting formulae. For ${n_{a,b}}$ and ${\ell_t}$, this is achieved in Theorem \ref{decomposition as FB module, q=p} and Theorem \ref{composition factors of H^0}, respectively.
\end{rem}
\begin{thm}
    \label{c_{a,b,t} coefficients}
    Let ${b \in [1,p-1]}$. If ${a \in [0,1]}$, define
    $$
    {
        \begin{array}{ll}
             c_{a,b}(t)=&\ \ 
             \left\{
             \begin{array}{lll}
                  1&\text{if }b \in \left[1,\frac{p-1}{2}\right],&t \in \left[1,2b+a-1\right]  \\
                  1&\text{if }b \in \left[\frac{p+1}{2},p-1\right],&t \in \left[1,2(p-b)-a\right]\\
                  0&\text{otherwise}
             \end{array}
             \right.
             \\
             &\\
             &+\left\{
             \begin{array}{lll}
                  1&\text{if }b \in \left[1,\frac{p-1}{2}\right],&t \in \left[p-a-2b+1,p-1\right]  \\
                  1&\text{if }b \in \left[\frac{p+1}{2},p-1\right],&t \in \left[a+2b-p,p-1\right]\\
                  0&\text{otherwise}
             \end{array}
             \right. .
        \end{array} 
    }
    $$
    Next, if ${a \in \left[2,\frac{p-1}{2}\right]}$, then define
    $$
    {
        \begin{array}{ll}
             c_{a,b}(t)=&\ \ 
             \left\{
             \begin{array}{lll}
                  1&\text{if }b \in \left[1,\frac{p-a}{2}\right],&t \in \left[a,a-1+2b\right] \\
                  1&\text{if }b \in \left[\frac{p-a+1}{2},p-a\right],&t \in \left[a,2(p-b)-a\right] \\
                  1&\text{if }b \in \left[p-a,p-\frac{a+1}{2}\right],&t \in \left[2(p-b)-a,a\right]  \\
                  1&\text{if }b \in \left[p-\frac{a}{2},p-1\right],&t \in \left[2(b-p)+a+1,a\right] \\
                  0&\text{otherwise}
             \end{array}
             \right.
             \\
             &\\
             &+\left\{
             \begin{array}{lll}
                  1&\text{if }b \in \left[1,\frac{p-a}{2}\right],&t \in \left[p-a-2b+1,p-1\right]  \\
                  1&\text{if }b \in \left[\frac{p-a+1}{2},p-a\right],&t \in \left[a+2b-p,p-1\right] \\
                  1&\text{if }b \in \left[p-a,p-\frac{a+1}{2}\right],&t \in \left[p-a,p-1\right]  \\
                  1&\text{if }b \in \left[p-\frac{a}{2},p-1\right],&t \in \left[p-a,p-1\right] \\
                  0&\text{otherwise}
             \end{array}
             \right. .
        \end{array} 
    }
    $$
    Finally, if ${a \in \left[\frac{p+1}{2},p-2\right]}$, define
    $$
    {
        \begin{array}{ll}
             c_{a,b}(t)=&\ \ 
             \left\{
             \begin{array}{lll}
                  1&\text{if }b \in \left[1,\frac{p-a}{2}\right],&t \in \left[p-a-2b+1,p-a\right]  \\
                  1&\text{if }b \in \left[\frac{p-a+1}{2},p-a\right],&t \in \left[a+2b-p,p-a\right] \\
                  1&\text{if }b \in \left[p-a,p-\frac{a+1}{2}\right],&t \in \left[p-a,a+2b-p\right]  \\
                  1&\text{if }b \in \left[p-\frac{a}{2},p-1\right],&t \in \left[p-a,2(p-b)-a-1+p\right] \\
                  0&\text{otherwise}
             \end{array}
             \right.
             \\
             &\\
             &+\left\{
             \begin{array}{lll}
                  1&\text{if }b \in \left[1,\frac{p-a}{2}\right],&t \in \left[a,p-1\right]  \\
                  1&\text{if }b \in \left[\frac{p-a+1}{2},p-a\right],&t \in \left[a,p-1\right] \\
                  1&\text{if }b \in \left[p-a,p-\frac{a+1}{2}\right],&t \in \left[2(p-b)-a,p-1\right]  \\
                  1&\text{if }b \in \left[p-\frac{a}{2},p-1\right],&t \in \left[2(b-p)+a+1,p-1\right] \\
                  0&\text{otherwise}
             \end{array}
             \right. .
        \end{array} 
    }
    $$
    Then the number of times the simple module ${V_t}$ occurs as a composition factor of the Green correspondent ${V_{a,b}}$ of ${U_{a,b}}$ is given by:
    $$
    {
        c_{a,b,t}
        =
        \left\{
            \begin{array}{ll}
                 0&\text{if }t \equiv a\Modwb{2}\\
                 c_{a,b}(t)&\text{if }t\not\equiv a\Modwb{2},\ t \in \left[1,\frac{p-1}{2}\right]  \\
                 c_{a,b}(p-t)&\text{if }t\not\equiv a\Modwb{2},\ t \in \left[\frac{p+1}{2},p-1\right] 
            \end{array}
        \right. .
    }
    $$
\end{thm}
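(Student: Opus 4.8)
The plan is to transfer the computation to the group algebra $\FF[B]$ via Green correspondence (Theorem~\ref{green correspondence}) and read it back. Since $B=N_G(U)$ with $U$ a trivial intersection Sylow $p$-subgroup, Green correspondence gives $\mathrm{Ind}_B^G(U_{a,b})\cong V_{a,b}\oplus Q_{a,b}$ for a uniquely determined projective $\FF[G]$-module $Q_{a,b}$, so that
$$c_{a,b,t}=\big[\mathrm{Ind}_B^G(U_{a,b}):V_t\big]-\big[Q_{a,b}:V_t\big].$$
Because $\mathrm{Ind}_B^G$ is exact and $U_{a,b}$ is uniserial with composition factors $S_a,S_{a+2},\dots,S_{a+2(b-1)}$ (Proposition~\ref{parameterisation of F[B] modules}), the first term equals $\sum_{i=0}^{b-1}[\mathrm{Ind}_B^G(S_{a+2i}):V_t]$. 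So the work splits into three tasks: determine the composition factors of each $\mathrm{Ind}_B^G(S_c)$; determine the projective summand $Q_{a,b}$; then subtract and simplify.

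\textbf{Step 1: the modules $\mathrm{Ind}_B^G(S_c)$.} Each has dimension $[G:B]=p+1$, and I would pin down its composition factors via Brauer characters. The Brauer character of $\mathrm{Ind}_B^G(S_c)$ is the induced Brauer character: evaluated on a regular split semisimple $\mathrm{diag}(\lambda,\lambda^{-1})$ (using the two fixed points on $G/B=\mathbb{P}^1$) it is $\lambda^c+\lambda^{-c}$, on $-I$ it is $(-1)^c(p+1)$, and on a non-split torus element it vanishes. Comparing with the Brauer characters of the simple modules $V_t=\mathrm{Sym}^{t-1}$ (value $(\lambda^t-\lambda^{-t})/(\lambda-\lambda^{-1})$ on $\mathrm{diag}(\lambda,\lambda^{-1})$), one reads off that $\mathrm{Ind}_B^G(S_c)$ has exactly the two composition factors $V_{t_c}$ and $V_{p+1-t_c}$ (with multiplicity), where $t_c$ is the residue of $c+1$ modulo $p-1$ taken in $\{1,\dots,p-1\}$; when $c\equiv 0$ this degenerates to $\{V_1,V_p\}$, and the projective factor $V_p=P_{V_p}$ is forced into $Q_{a,b}$. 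Hence $[\mathrm{Ind}_B^G(U_{a,b}):V_t]$ becomes an explicit count of indices $i\in[0,b-1]$ with $a+2i\equiv t-1$ or $a+2i\equiv p-t\pmod{p-1}$.

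\textbf{Step 2: the projective summand $Q_{a,b}$ — the main obstacle.} I would use the short exact sequence $0\to U_{a,b}\to P^B_{S_a}\to U_{a+2b,\,p-b}\to 0$ arising from the uniserial structure of the $\FF[B]$-projective cover $P^B_{S_a}=U_{a,p}$, and induce it up. Since $\mathrm{Ind}_B^G(P^B_{S_a})$ is projective, Frobenius reciprocity together with the identification $\mathrm{Res}^G_B(V_t)\cong U_{1-t,\,t}$ (a single Jordan block, read off from the action of $\binom{1\ 1}{0\ 1}$ and $\binom{\zeta\ 0}{0\ \zeta^{-1}}$ in Proposition~\ref{simple F[G] modules}) yields $\mathrm{Ind}_B^G(P^B_{S_a})\cong\bigoplus_t P_{V_t}^{\oplus[\,U_{1-t,t}\,:\,S_a\,]}$, each multiplicity again an explicit index count. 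Feeding this into the induced exact sequence, and using that $\FF[G]$ has cyclic Sylow $p$-subgroups — so its defect-$1$ blocks are Brauer tree algebras with projective indecomposables as in Proposition~\ref{projective indecomposable F[G] modules}, and every indecomposable module over them is a ``string'' module — one can locate $V_{a,b}$ precisely inside $\mathrm{Ind}_B^G(U_{a,b})$, hence identify $Q_{a,b}$, and then $[Q_{a,b}:V_t]$ is read off from the composition factors of the $P_{V_t}$ ($\{V_1,V_{p-2},V_1\}$ for $t=1$; $\{V_t,V_t,V_{p+1-t},V_{p-1-t}\}$ for $1<t<p$; $\{V_p\}$ for $t=p$). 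The hard part is exactly this: deciding which glued-together subquotients of the length-two modules $\mathrm{Ind}_B^G(S_{a+2i})$ become projective, i.e. determining the precise Loewy structure of the Green correspondent $V_{a,b}$.

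\textbf{Step 3: assembling the formula.} After the subtraction, $c_{a,b,t}$ is a sum of indicator functions minus the PIM corrections, and one collapses it by case analysis. The vanishing for $t\equiv a\pmod 2$ reflects that $V_{a,b}$ lies in the single defect-$1$ block of $\FF[G]$ selected by the action of $-I$; the reflection $t\mapsto p-t$ for $t>\tfrac{p-1}{2}$ comes from the symmetry $V_t\leftrightarrow V_{p+1-t}$ visible in Step 1 and in the structure of the $P_{V_t}$; and the four ranges for $b$ in the statement record how often the progression $a,a+2,\dots,a+2(b-1)$ wraps past $p-1$ relative to the sub-range carrying the relevant residue. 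Working through the regimes $a\in[0,1]$, $a\in[2,\tfrac{p-1}{2}]$, $a\in[\tfrac{p+1}{2},p-2]$ and telescoping the sums then produces the three displayed closed forms; the remaining difficulty is purely combinatorial but substantial, amounting to checking all the boundary cases in the piecewise formulas.
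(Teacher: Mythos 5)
The paper does not actually prove this theorem in-house: it is quoted verbatim from \cite[\S 3, Cor. 3.18]{marchment2025green}, so there is no internal argument to measure yours against. Judged on its own terms, your set-up is sound and consistent with everything the paper records: Green correspondence (Theorem~\ref{green correspondence}) does reduce the problem to computing $[\Ind_B^G(U_{a,b}):V_t]-[Q_{a,b}:V_t]$ where $\Ind_B^G(U_{a,b})\cong V_{a,b}\oplus Q_{a,b}$; your Brauer-character identification of the composition factors of $\Ind_B^G(S_c)$ agrees with Proposition~\ref{Ind_B^G(S_a) composition factors}; the identification $\Res^G_B(V_t)\cong U_{1-t,t}$ and the resulting multiplicities $[U_{1-t,t}:S_a]$ of $P_{V_t}$ in $\Ind_B^G(U_{a,p})$ are correct; and the vanishing $c_{a,b,t}=0$ for $t\equiv a\ \mathrm{mod}\ 2$ already follows from your Step 1 together with block theory, since every factor of $\Ind_B^G(S_{a+2i})$ is some $V_s$ with $s\not\equiv a\ \mathrm{mod}\ 2$.

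The genuine gap is Step 2, and you have named it yourself without closing it. Inducing the single exact sequence $0\to U_{a,b}\to U_{a,p}\to U_{a+2b,p-b}\to 0$ yields one Grothendieck-group relation $[V_{a,b}]+[Q_{a,b}]+[V_{a+2b,p-b}]+[Q_{a+2b,p-b}]=[\Ind_B^G(U_{a,p})]$, which cannot determine $[Q_{a,b}]$ by itself: it ties together two unknown projective parts and two unknown Green correspondents. To pin down $Q_{a,b}$ one must actually use the structure of the two defect-one blocks of $\mathbb{F}[\mathrm{SL}_2(\mathbb{F}_p)]$ --- their Brauer trees (readable off Proposition~\ref{projective indecomposable F[G] modules}) and the classification of indecomposables in a cyclic-defect block as walks around the tree --- or, equivalently, run an induction on $b$ via the extensions $0\to U_{a,b}\to U_{a,b+1}\to S_{a+2b}\to 0$ to track how each new length-two piece $\Ind_B^G(S_{a+2b})$ either glues onto $V_{a,b}$ or completes a projective. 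You invoke these tools by name but do not execute the identification, and Step 3 (verifying that the resulting counts collapse to the three piecewise displays, including the boundary cases in $b$ and the reflection $t\mapsto p-t$) is likewise only asserted. As it stands the proposal is a correct and plausible roadmap --- very likely the route the cited reference takes --- but not yet a proof of the stated formulas.
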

We will also make use of the following result, which describes the composition factors of the induced modules ${\Ind^G_B(S_a)}$.
\begin{prop}
    \label{Ind_B^G(S_a) composition factors}
    Let ${0\leq a\leq p-2}$. The composition factors of ${\Ind_B^G(S_a)}$ are ${V_{a+1},V_{p-a}}$. 
\end{prop}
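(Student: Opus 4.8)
The plan is to identify the socle and top of $\Ind^G_B(S_a)$ as an $\mathbb{F}[G]$-module by Frobenius reciprocity, and then to force a two-step composition series by counting dimensions. I would begin with the numerics: $|G| = p(p^2-1)$ and $|B| = p(p-1)$, so $[G:B] = p+1$ and $\dim_{\mathbb{F}}\Ind^G_B(S_a) = p+1$; since $\dim_{\mathbb{F}} V_t = t$ by Proposition~\ref{simple F[G] modules}, this is at least consistent with the claim, as $\dim V_{a+1} + \dim V_{p-a} = p+1$.

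The one substantive preliminary is the $\mathbb{F}[B]$-module structure of $\Res^G_B(V_t)$ for $1 \le t \le p$. Realising $V_t$ as the space of homogeneous polynomials of degree $t-1$ in $x,y$ with the action of Proposition~\ref{simple F[G] modules}, the standard unipotent generator $u$ of $U$ sends $x \mapsto x+y$ and $y \mapsto y$, and a short induction gives $(u-1)^{t-1} x^{t-1} = (t-1)!\, y^{t-1}$, which is nonzero since $1,\dots,t-1$ are all nonzero mod $p$. Hence $u$ acts on $V_t$ as a single Jordan block, so $\Res^G_B(V_t)$ is indecomposable, hence uniserial by Proposition~\ref{parameterisation of F[B] modules}, say $\Res^G_B(V_t) \cong U_{c,t}$. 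Its socle is the $U$-fixed line $\mathbb{F}\cdot y^{t-1}$, on which the chosen generator of $T$ acts by $\zeta^{-(t-1)}$, so $\mathrm{soc}_B(\Res^G_B V_t) \cong S_{1-t}$; reading off the ascending composition series of $U_{c,t}$ from Proposition~\ref{parameterisation of F[B] modules} then also gives $\mathrm{top}_B(\Res^G_B V_t) \cong S_{t-1}$.

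Next I would apply Frobenius reciprocity in both directions. Since any $\mathbb{F}[B]$-map from a simple module has image in the socle and any one onto a simple module factors through the top, $\mathrm{Hom}_G(V_t, \Ind^G_B S_a) \cong \mathrm{Hom}_B(\mathrm{top}_B\Res^G_B V_t, S_a)$ and $\mathrm{Hom}_G(\Ind^G_B S_a, V_t) \cong \mathrm{Hom}_B(S_a, \mathrm{soc}_B\Res^G_B V_t)$; by the previous step these are one-dimensional exactly when $t \equiv a+1$, respectively $t \equiv 1-a$, modulo $p-1$, and zero otherwise. For $1 \le a \le p-2$ the only $t \in [1,p]$ with $t \equiv a+1 \pmod{p-1}$ is $t = a+1$, and the only one with $t \equiv 1-a \pmod{p-1}$ is $t = p-a$; hence $\mathrm{soc}_G(\Ind^G_B S_a) = V_{a+1}$ and $\mathrm{top}_G(\Ind^G_B S_a) = V_{p-a}$, and both are simple.

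To finish when $1 \le a \le p-2$: having simple top, $M := \Ind^G_B S_a$ has a unique maximal submodule $K = \mathrm{rad}(M)$, with $\dim K = (p+1) - (p-a) = a+1$; the simple submodule $\mathrm{soc}_G(M) = V_{a+1}$ is proper in $M$ (since $M$ is not simple, by the dimension count), hence lies in $K$, and $\dim V_{a+1} = a+1 = \dim K$ forces $K = V_{a+1}$. Thus $0 \to V_{a+1} \to M \to V_{p-a} \to 0$ is exact and the composition factors are precisely $V_{a+1}$ and $V_{p-a}$; this argument is uniform, covering also the self-conjugate case $a = (p-1)/2$ where $V_{a+1} = V_{p-a}$. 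The case $a = 0$ must be treated separately, since there the top is not simple: both $t=1$ and $t = p$ satisfy $t \equiv 1 \pmod{p-1}$, so $\mathrm{soc}_G(\Ind^G_B S_0) \supseteq V_1 \oplus V_p$, which already has dimension $p+1$; hence $\Ind^G_B S_0 \cong V_1 \oplus V_p$, with composition factors $V_{0+1}, V_{p-0}$ as claimed. I expect the only genuine effort to be in the $\mathbb{F}[B]$-module analysis of $\Res^G_B V_t$ (the single-Jordan-block claim and the weights of its socle and top); given that, everything else is formal, the one wrinkle being the need to handle $a=0$ on its own.
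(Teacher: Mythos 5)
Your proposal is correct, and it is genuinely different from what the paper does: the paper gives no argument at all here, simply citing \cite[\S 3, Cor.\ 3.17]{marchment2025green}, whereas you supply a complete, self-contained proof from first principles. The two load-bearing steps both check out. First, $(u-1)^{t-1}x^{t-1}=(t-1)!\,y^{t-1}\neq 0$ for $t\leq p$ (the coefficient of $y^{t-1}$ in $\sum_k\binom{t-1}{k}(-1)^{t-1-k}(x+ky)^{t-1}$ is the $(t-1)$-st finite difference of $k^{t-1}$, and the lower-order coefficients vanish), so $u$ acts as a single Jordan block and $\Res^G_B(V_t)\cong U_{1-t,t}$ with socle $S_{1-t}$ and top $S_{t-1}$; this is consistent with the paper's convention that the composition factors $S_a,S_{a+2},\dots$ of $U_{a,b}$ are listed socle-to-top. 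Second, the two Frobenius reciprocity computations correctly pin down $\soc_G$ and the head of $\Ind_B^G(S_a)$ with multiplicity one, and the dimension count $\dim V_{a+1}+\dim V_{p-a}=p+1=[G:B]$ then forces the two-step composition series for $1\leq a\leq p-2$ (including the self-dual case $a=(p-1)/2$) and the splitting $\Ind_B^G(S_0)\cong V_1\oplus V_p$ for $a=0$. In fact your argument yields strictly more than the cited statement, namely the full submodule structure: $\Ind_B^G(S_a)$ is uniserial with socle $V_{a+1}$ and head $V_{p-a}$ when $a\geq 1$, and semisimple when $a=0$. What the citation buys the paper is brevity and uniformity with the other imported facts (Theorems \ref{lifting decomposition} and \ref{c_{a,b,t} coefficients}); what your argument buys is independence from \cite{marchment2025green} for this particular input and an explicit structural description that the composition-factor statement alone does not provide.
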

\begin{proof}
    This is \cite[\S 3, Cor. 3.17]{marchment2025green}.
\end{proof}

\section{${\mathbb{F}[B]}$ module decomposition of ${H^0(C,\Omega_C^{\otimes m})}$}
In this section, we give the decomposition of ${\Res^G_B(H^0(C,\Omega_C^{\otimes m}))}$ when ${q=p}$. As before, we make use of the notations in \S \ref{notation-section}. Since ${B = U\rtimes T}$ is a \textit{p-hypo-elementary group}, we can utilise Theorem 1.6 from \cite{bleher} to write ${H^0(C,\Omega_C^{\otimes m})}$ as a direct sum of indecomposable ${\mathbb{F}[B]}$ modules. In order to apply this theorem, we first recall and compute various items.\\
\\
The following Proposition holds in general for any prime power $q$, and so do Lemma \ref{quotient of p1(F) by T} and Corollary \ref{the quotient C/B} after the obvious modifications.
\begin{prop}
    \label{the quotient C/U}
    We have ${C/U \cong \mathbb{P}^1(\mathbb{F})}$, with quotient map
    $$
    {
        \pi_U(X:Y:Z)
        =
        \left\{
        \begin{array}{ll}
             [Y:Z]&\text{when ${Y\neq 0}$}  \\
            
             [0:1]&\text{when ${Y=0}$} 
        \end{array}
        \right. .
    }
    $$
    Furthermore, the only point of ${C}$ ramified under the action of $U$ is ${[1:0:0]}$, which has ramification index ${q}$. Finally, the induced action of ${B/U \cong T}$ on ${C/U \cong \mathbb{P}^1(\mathbb{F})}$ is given by
    $$
    {
        \left(
            \begin{array}{ll}
                 \alpha&0  \\
                 0&\alpha^{-1} 
            \end{array}
        \right)
        \cdot
        [Y:Z]
        =
        [Y:\alpha Z].
    }
    $$
\end{prop}
\begin{proof}
    The first two statements are \cite[\S 1.2, Lem. 1.30]{lucas-mphil}. We show that the induced action of ${B/U \cong T}$ on ${C/U \cong \mathbb{P}^1(\mathbb{F})}$ is as above. Let ${[Y:Z] \in \mathbb{P}^1(\mathbb{F})}$ with ${Y \neq 0}$. Because ${\mathbb{F}}$ is algebraically closed, there is some ${X' \in \mathbb{F}}$ so that ${X'Y^{p} - (X')^pY - Z^{p+1} = 0}$. Hence we can lift ${[Y:Z]}$ to a point ${[X':Y:Z]}$ under ${\pi_U}$ and:
    $$
    {
        \left(
            \begin{array}{ll}
                 \alpha&0  \\
                 0&\alpha^{-1} 
            \end{array}
        \right)
        \cdot
        [X:Y]
        =
        \pi_U\left(
            \left(
            \begin{array}{ll}
                 \alpha&0  \\
                 0&\alpha^{-1} 
            \end{array}
            \right)
            \cdot
            [X':Y:Z]
        \right)
    }
    $$
    which is
    $$
    {
        \pi_U([\alpha X':\alpha^{-1}Y:Z])
        =
        [Y:\alpha Z].
    }
    $$
    The case of ${[Y:Z] = [0:1]}$ is done in the same way.
\end{proof}
\begin{lem}
    \label{quotient of p1(F) by T}
    We have ${\mathbb{P}^1(\mathbb{F})/T \cong \mathbb{P}^1(\mathbb{F})}$, with quotient morphism ${\lambda_T: \mathbb{P}^1(\mathbb{F}) \to \mathbb{P}^1(\mathbb{F})}$ given by:
    $$
    {
        \lambda_T(Y:Z) = [Y^{p-1}:Z^{p-1}].
    }
    $$
    Furthermore, the only points in ${\mathbb{P}^1(\mathbb{F})}$ ramified under the action of $T$ are ${[0:1],[1:0]}$. The ramification indices are given by ${e_{[1:0]} = e_{[0:1]} = p-1}$. Finally, let ${\theta}$ denote the character of the simple ${\mathbb{F}[T]}$ module ${S_1}$ (see Proposition \ref{parameterisation of F[B] modules}). Then the fundamental characters ${\theta_{[1:0]},\theta_{[0:1]}}$ of ${\mathbb{P}^1(\mathbb{F})}$ under the action of $T$ are given by ${\theta,\theta^{p-2}}$ respectively.
\end{lem}
\begin{proof}
    These results are standard and easy, and thus we omit the proofs.
\end{proof}
\begin{cor}
    \label{the quotient C/B}
    We have ${C/B \cong \mathbb{P}^1(\mathbb{F})}$, with quotient map
    $$
    {
        \pi_B([X:Y:Z])
        =
        \begin{dcases}
            [Y^{p-1}:Z^{p-1}]&\text{when }Y\neq 0\\
            [0:1]&\text{when }Y=0.
        \end{dcases}
    }
    $$
    The point of ramification under the action of $B$ are of the form ${[X:Y:0]}$. The point ${[1:0:0]}$ has ramification index ${p(p-1))}$ under $B$, and all other points at infinity have ramification index ${p-1}$ under $B$.
\end{cor}
\begin{proof}
    This now follows from Proposition \ref{the quotient C/U} and Lemma \ref{quotient of p1(F) by T}.
\end{proof}
The next lemma evaluates the quantity ${d_{P,j}}$ which, in a more general context, is defined in \cite{bleher} for any ${0\leq j\leq p-1}$ and ${P \in C/U}$ as follows:
\begin{equation}
    \label{d_{P,j} definition}
    d_{P,j} = \floor*{\frac{m\sum_{i\geq 0}(|I_{Q,i}|-1) - \sum_{l=1}^{n_Q}a_{l,t}p^{n_Q-l}j_l(Q)}{p^{n_Q}}}
\end{equation}
where (1) $Q$ is any point in ${\pi^{-1}_U(P)}$, (2) $I$ is defined as the cyclic subgroup of $U$ generated by the Sylow $p$-subgroups of the inertia groups of all closed points of $C$, (3) ${n_Q\geq 0}$ is defined as the natural number such that ${|I_Q|=p^{n_Q}}$, (4) ${a_{l,t}}$ is the ${l^{\thh}}$ coefficient of the $p$-adic expansion of the unique natural number ${t}$ satisfying ${p^{n_I-n_Q}t \leq j\leq p^{n_I-n_Q}(t+1)}$, where ${|I| = p^{n_I}}$, (5) ${j_l(Q)}$ are the jumping numbers of $Q$ under the action of ${I_Q}$.\\
\\
For ${0\leq j\leq p-1}$, let
\begin{equation}
    \label{D_j definition}
    D_j = \sum_{P \in C/U}d_{P,j}P
\end{equation}
with ${d_{P,j}}$ given as in (\ref{d_{P,j} definition}).
\begin{lem}
    \label{D_j lemma}
     Let ${0\leq j\leq p-1}$. After identifying ${C/U}$ with ${\mathbb{P}^1(\mathbb{F})}$ according to Proposition \ref{the quotient C/U}, we obtain:
    $$
    {
        D_j = \left(m(p+1) - j - \ceil*{\frac{2m+j}{p}}\right)[0:1].
    }
    $$
\end{lem}
\begin{proof}
    We have ${I = U}$. Hence ${n_I = 1}$. Next, ${n_Q = 1}$ precisely when ${Q}$ is a ramified point under the action of $U$, and ${n_Q = 0}$ otherwise. It's clear that when ${n_Q = 0}$, we have ${d_{P,j} = 0}$. When ${n_Q = 1}$, clearly ${t = j}$, and hence ${a_{1,t} = j}$ and ${a_{l,t} = 0}$ otherwise. By Proposition \ref{the quotient C/U}, the only point ramified under the action of $U$ is ${Q = [1:0:0]}$. By \cite[\S 2.1, Lem. 2.6]{lucas-mphil}, we have ${j_1([1:0:0]) = p+1}$. Putting this altogether, we obtain
    $$
    {
        D_j = \floor*{\frac{m(p-1)(p+2) - j(p+1)}{p}}\pi_U([1:0:0]) =
        \floor*{
            m(p+1) - j - \frac{2m+j}{p}
        }[0:1].
    }
    $$
    By using the fact that ${\floor{-s} = -\ceil{s}}$ for real numbers $s$, we arrive at what is stated in the lemma.
\end{proof}
Next, we fix the canonical divisor ${\mathcal{C} := -2[0:1]}$ on ${C/U \cong \mathbb{P}^1(\mathbb{F})}$. For ${0\leq j\leq p-1}$, as in \cite{bleher} we define the following:
\begin{equation}
    \label{K definition}
    \mathcal{K} = \lambda_T^*(\mathcal{C})
    + \Ram_{\lambda_T}
\end{equation}
\begin{equation}
    \label{E_j definition}
    E_j = m\mathcal{K}  + D_j.
\end{equation}
Note that by \cite[\S \RN{4}.2, Prop. 2.3]{hartshorne}, ${\mathcal{K}}$ is a canonical divisor on ${\mathbb{P}^1(\mathbb{F})}$.
\begin{lem}
    \label{E_j lemma}
    For ${0\leq j\leq p-1}$, ${E_j}$ as defined in equation (\ref{E_j definition}) is given by:
    $$
    {   
        E_j = m(p-2)[1:0] + \left(m-j-\ceil*{\frac{2m+j}{p}}\right) [0:1].
    }
    $$
\end{lem}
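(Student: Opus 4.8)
The strategy is to compute $E_j = m\mathcal{K} + D_j$ directly from the definitions, using the explicit description of $\lambda_T$ and its ramification already established. Since $D_j$ is given by Lemma~\ref{D_j lemma} as a divisor supported at $[0:1]$, the only real work is computing $\mathcal{K} = \lambda_T^*(\mathcal{C}) + \Ram_{\lambda_T}$, where $\mathcal{C} = -2[0:1]$ is the fixed canonical divisor on $C/U \cong \mathbb{P}^1(\FF)$ and $\lambda_T$ is the degree-$(p-1)$ map $[Y:Z] \mapsto [Y^{p-1}:Z^{p-1}]$ from Lemma~\ref{quotient of p1(F) by T}.

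\textbf{Step 1: Pull back the canonical divisor.}
First I would compute $\lambda_T^*([0:1])$. Writing $\lambda_T$ in the affine chart where the coordinate downstairs is (say) $Z/Y$ and upstairs is $Z/Y$, the map is $u \mapsto u^{p-1}$, so the fibre over $[0:1]$ (i.e. $u = 0$) is the single point $[0:1]$ with multiplicity $p-1$; similarly the fibre over $[1:0]$ is the single point $[1:0]$ with multiplicity $p-1$. Hence $\lambda_T^*([0:1]) = (p-1)[0:1]$, and so $\lambda_T^*(\mathcal{C}) = -2(p-1)[0:1]$.

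\textbf{Step 2: Compute the ramification divisor of $\lambda_T$.}
By Corollary~\ref{C/U T ramified points}, the only ramified points of $\lambda_T$ are $[1:0]$ and $[0:1]$, each with ramification index $p-1$, and since $\gcd(p-1,p) = 1$ this ramification is tame. By Lemma~\ref{ramification divisor lemma}, $\Ord_P(\lambda_T^*(dt_P)) = e_P - 1 = p-2$ at each of these two points, so $\Ram_{\lambda_T} = (p-2)[1:0] + (p-2)[0:1]$. Combining with Step~1,
\[
  \mathcal{K} = -2(p-1)[0:1] + (p-2)[1:0] + (p-2)[0:1] = (p-2)[1:0] + (-p)[0:1].
\]
(One can sanity-check that $\deg \mathcal{K} = (p-2) - p = -2$, the correct degree of a canonical divisor on $\mathbb{P}^1$.)

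\textbf{Step 3: Assemble $E_j$.}
Now $m\mathcal{K} = m(p-2)[1:0] - mp[0:1]$, and by Lemma~\ref{D_j lemma}, $D_j = \left(m(p+1) - j - \ceil*{\frac{2m+j}{p}}\right)[0:1]$. Adding these, the coefficient of $[1:0]$ in $E_j$ is $m(p-2)$, and the coefficient of $[0:1]$ is
\[
  -mp + m(p+1) - j - \ceil*{\frac{2m+j}{p}} = m - j - \ceil*{\frac{2m+j}{p}},
\]
which is exactly the claimed formula. I do not anticipate a serious obstacle here; the only point requiring care is Step~1--2, namely getting the local coordinates of $\lambda_T$ right at both fixed points so that the pullback of the canonical divisor and the tame-ramification contribution are correctly bookkept (in particular keeping track that $[1:0]$ receives a contribution only from $\Ram_{\lambda_T}$ while $[0:1]$ receives contributions from all three of $\lambda_T^*(\mathcal{C})$, $\Ram_{\lambda_T}$, and $D_j$).
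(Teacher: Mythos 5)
Your proposal is correct and follows essentially the same route as the paper: pull back the canonical divisor $-2[0:1]$ along the degree-$(p-1)$ map $\lambda_T$, compute $\Ram_{\lambda_T}=(p-2)[1:0]+(p-2)[0:1]$ via tameness and Lemma \ref{ramification divisor lemma}, and add $m\mathcal{K}$ to $D_j$. Your Step 1 just spells out the local-coordinate computation of $\lambda_T^*([0:1])$ that the paper leaves implicit.
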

\begin{proof}
    By Corollary \ref{the quotient C/U}, we get ${\lambda_T^*(\mathcal{C}) = -2(p-1)[0:1]}$, and also that all points in ${C/U \cong \mathbb{P}^1(\mathbb{F})}$ are tamely ramified under ${\lambda_T}$. By \cite[\S \RN{4}.2, Prop. 2.2]{hartshorne},
    $$
    {
        \Ram_{\lambda_T} = (p-2)[1:0] + (p-2)[0:1].
    }
    $$
    Adding these together, we get
    $$
    {
        \mathcal{K} = (p-2)[1:0] - p[0:1].
    }
    $$
    Therefore, ${E_j}$ is
    $$
    {
        E_j = m(p-2)[1:0] - mp[0:1] + \left(m(p+1) - j - \ceil*{\frac{2m+j}{p}}\right)[0:1]
    }
    $$
    which indeed simplifies to the lemma statement.
\end{proof}
As also defined in \cite{bleher}, for ${0\leq j\leq p-1}$, we let ${0\leq \ell_{[1:0],j},\ell_{[0:1],j}\leq p-2}$ be such that
\begin{equation}
    \label{definition of l_j}
    \begin{aligned}
             \ell_{[1:0],j}\equiv&\ m(p-2)\Mod{p-1} \\
             \ell_{[0:1],j}\equiv&\ \left(m-j-\ceil*{\frac{2m+j}{p}}\right)\Mod{p-1}.
    \end{aligned}
\end{equation}
Although ${\ell_{[1:0],j}}$ as above does not depend on the index $j$, we keep it to match the notation of Theorem ${1.6}$ in \cite{bleher}. Next, as in \cite{bleher}, for each branch point ${z \in \Br(\lambda_T) \subset C/B \cong \mathbb{P}^1(\mathbb{F})}$, let ${y(z) \in C/U \cong \mathbb{P}^1(\mathbb{F})}$ such that ${\lambda_T(y(z)) = z}$. Let
\begin{equation}\label{n_j definition}
    n_j = 1 - g\left(C/B\right)
    +
    \sum_{z \in \Br(\lambda_T)}
    \frac{\Ord_{y(z)}(E_j) - \ell_{y(z),j}}{e_{y(z)}}
\end{equation}
where ${e_{y(z)}}$ is the ramification index for ${y(z)}$ under the action of ${T}$.
\begin{lem}
    \label{n_j lemma}
    We have that ${n_j}$ as defined in (\ref{n_j definition}) is given by
    $$
    {
        n_j = 
        1 + m -
        \ceil*{\frac{m}{p-1}}
        +
        \floor*{\frac{m-j-\ceil*{\frac{2m+j}{p}}}{p-1}} .
    }
    $$
\end{lem}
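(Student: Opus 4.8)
The plan is to substitute the explicit data computed in the preceding lemmas directly into the defining formula \eqref{n_j definition} and then perform elementary floor/ceiling manipulations; there is no conceptual content beyond careful bookkeeping.

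First I would record the three ingredients entering \eqref{n_j definition}. Since $C/B \cong \mathbb{P}^1(\mathbb{F})$ by Proposition \ref{the quotients C/U and C/B}, we have $g(C/B) = 0$, so the leading term is $1$. By Corollary \ref{C/U T ramified points}, $\lambda_T$ is ramified precisely over the images of $[1:0]$ and $[0:1]$; since $\lambda_T(Y:Z) = [Y^{p-1}:Z^{p-1}]$ fixes both of these points and $p \nmid p-1$, each is totally and tamely ramified with unique preimage $y([1:0]) = [1:0]$ (resp. $y([0:1]) = [0:1]$) and ramification index $e_{[1:0]} = e_{[0:1]} = p-1$. Hence $\Br(\lambda_T)$ has exactly two elements and the sum in \eqref{n_j definition} has two terms, indexed by $y(z) \in \{[1:0],[0:1]\}$. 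By Lemma \ref{E_j lemma}, $\Ord_{[1:0]}(E_j) = m(p-2)$ and $\Ord_{[0:1]}(E_j) = m - j - \ceil*{\frac{2m+j}{p}}$.

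Next I would invoke the elementary fact that if $N \in \mathbb{Z}$ and $0 \le \ell \le p-2$ with $\ell \equiv N \pmod{p-1}$, then $N = (p-1)\floor*{\frac{N}{p-1}} + \ell$, so $\frac{N-\ell}{p-1} = \floor*{\frac{N}{p-1}}$. Applying this with $(N,\ell) = (m(p-2),\,\ell_{[1:0],j})$ and $(N,\ell) = \bigl(m - j - \ceil*{\frac{2m+j}{p}},\,\ell_{[0:1],j}\bigr)$, using the congruences \eqref{definition of l_j}, the formula \eqref{n_j definition} collapses to
$$
n_j = 1 + \floor*{\frac{m(p-2)}{p-1}} + \floor*{\frac{m - j - \ceil*{\frac{2m+j}{p}}}{p-1}},
$$
which is the first claimed expression. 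For the second form, $m(p-2) = m(p-1) - m$ gives $\frac{m(p-2)}{p-1} = m - \frac{m}{p-1}$, and since $m \in \mathbb{Z}$ we have $\floor*{m - \frac{m}{p-1}} = m - \ceil*{\frac{m}{p-1}}$; substituting yields the stated identity. The only step meriting any care is the identification of $\Br(\lambda_T)$ together with the matching of each branch point to its preimage and ramification index, and the verification that $\lambda_T$ is tamely ramified so that the $\Ord$-values from Lemma \ref{E_j lemma} are indeed the ones appearing in \eqref{n_j definition}; all of this is immediate from Corollary \ref{C/U T ramified points} and $\gcd(p,p-1)=1$.
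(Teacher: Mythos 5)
Your proposal is correct and follows essentially the same route as the paper's (much terser) proof: substitute $g(C/B)=0$ and the data for the two branch points of $\lambda_T$ into the defining formula, then use the congruences defining $\ell_{[1:0],j}$ and $\ell_{[0:1],j}$ to convert each quotient into a floor. The floor/ceiling manipulations, including $\floor*{m(p-2)/(p-1)} = m - \ceil*{m/(p-1)}$, are all valid.
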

\begin{proof}
    Since the genus of the projective line is $0$, substituting Lemma \ref{E_j lemma} for the two branch points we have under ${\lambda_T}$ yields
    $$
    {
        n_j = 1 + \floor*{\frac{m(p-2)}{p-1}} + \floor*{\frac{m - j  - \ceil*{\frac{2m+1}{j}}}{p-1}}
    }
    $$
    and hence the result.
\end{proof}
We continue following \cite{bleher}. Let ${P \in C/U \cong \mathbb{P}^1(\mathbb{F})}$, and let ${\theta_P}$ be the fundamental character at $P$ under the action of $T$. Then for ${0\leq a\leq p-1}$ and ${i \in \mathbb{Z}}$, we define
\begin{equation}
    \label{mu_{a,i} definition}
    \mu_{a,i}(P) =
    \left\{
    \begin{array}{ll}
         1&\text{if }\theta_P^i \text{ is equal to the character of }\Res^T_{T_P}(S_a)  \\
         0&\text{otherwise} 
    \end{array}
    \right. .
\end{equation}
(where ${S_a}$ is defined in Proposition \ref{parameterisation of F[B] modules}). Note that if ${P}$ is an unramified point, then since ${T_P = 1}$, we have ${\mu_{a,i}(P) = 1}$. The non-trivial behaviour of ${\mu_{a,i}}$ occurs for the ramified points of ${C/U \cong \mathbb{P}^1(\mathbb{F})}$.
\begin{lem}
    \label{mu_{a,i} lemma}
    For the two ramified points ${P \in \{[1:0],[0:1]\}}$ of ${C/U \cong \mathbb{P}^1(\mathbb{F})}$ under the action of $T$ (Corollary \ref{the quotient C/U}), ${\mu_{a,i}(P)}$ as defined in (\ref{mu_{a,i} definition}) gives us the following two functions:
    $$
    {
        \mu_{a,i}([1:0]) =
        \left\{
            \begin{array}{ll}
                 1&\text{if }a\equiv i\Mod{p-1}  \\
                 0&\text{otherwise} 
            \end{array}
        \right.
        ,\ 
        \mu_{a,i}([0:1]) =
        \left\{
            \begin{array}{ll}
                 1&\text{if }a\equiv (p-1-i)\Mod{p-1}  \\
                 0&\text{otherwise} 
            \end{array}
        \right. .
    }
    $$
\end{lem}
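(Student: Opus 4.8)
The plan is to unwind the definition of $\mu_{a,i}(P)$ from (\ref{mu_{a,i} definition}) at the two ramified points $[1:0]$ and $[0:1]$ of $C/U \cong \mathbb{P}^1(\mathbb{F})$, reading off the two formulas from Corollaries \ref{C/U T ramified points} and \ref{fundamental characters of P^1(F) under the action of T}. The only structural input required is that at each of $P = [1:0]$ and $P = [0:1]$ the inertia subgroup $T_p$ of $P$ under the $T$-action is all of $T$: by Corollary \ref{C/U T ramified points} the ramification index at each of these points equals $p-1$, and since $T \cong \mathbb{Z}/(p-1)$ this forces the inertia subgroup, whose order is the ramification index, to be $T$ itself. (At an unramified point the inertia is trivial, which is why $\mu_{a,i}(P) = 1$ holds automatically there, as remarked just before the lemma; so only $[1:0]$ and $[0:1]$ carry nontrivial data.)

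With $T_p = T$ at these points, I will note $\Res^T_{T_p}(S_a) = S_a$, whose character is $\theta^a$, where $\theta$ is the character of $S_1$ sending the generator $\left(\begin{matrix}\zeta & 0\\ 0 & \zeta^{-1}\end{matrix}\right)$ of $T$ to $\zeta$; since $\zeta$ has multiplicative order $p-1$, two powers satisfy $\theta^c = \theta^{c'}$ as characters of $T$ precisely when $c \equiv c' \Mod{p-1}$. By Corollary \ref{fundamental characters of P^1(F) under the action of T} and the remark following it, the fundamental characters are $\theta_{[1:0]} = \theta$ and $\theta_{[0:1]} = \theta^{-1} = \theta^{p-2}$. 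Plugging these into (\ref{mu_{a,i} definition}) then yields the claim: $\mu_{a,i}([1:0]) = 1$ exactly when $\theta^i = \theta^a$, i.e. $a \equiv i \Mod{p-1}$, and $\mu_{a,i}([0:1]) = 1$ exactly when $\theta^{-i} = \theta^a$, i.e. $a \equiv -i \equiv p-1-i \Mod{p-1}$.

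I expect no genuine obstacle: the statement is a bookkeeping consequence of results already in place. The two points that need a little care are the identification $T_p = T$ at the ramified points (rather than some proper subgroup), and tracking the exponents correctly — the fundamental character at $[0:1]$ being the inverse of the one at $[1:0]$ is exactly what converts the congruence $a \equiv i$ at $[1:0]$ into $a \equiv p-1-i$ at $[0:1]$.
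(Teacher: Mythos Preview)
Your proof is correct and follows essentially the same route as the paper's own argument: identify the inertia subgroup at each ramified point as all of $T$, use $\Res^T_{T}(S_a)=S_a$ with character $\theta^a$, plug in $\theta_{[1:0]}=\theta$ and $\theta_{[0:1]}=\theta^{-1}$, and read off the congruences. The only cosmetic difference is that the paper writes $\theta_{[0:1]}^i=\theta^{(p-2)i}$ and then reduces $(p-2)i\equiv -i\pmod{p-1}$, whereas you go directly via $\theta^{-i}$; these are the same computation.
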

\begin{proof}
    We have ${\theta_{[1:0]} = \theta}$, so ${\theta_{[1:0]}^i = \theta^i}$. Also, ${T_{[1:0]} = T}$, and so ${\Res^T_{T_{[1:0]}}(S_a) = S_a}$, which has character ${\theta^a}$. Then ${\theta^i = \theta^a}$ if and only if ${a\equiv i\Mod{p-1}}$. Similarly, ${\theta_{[0:1]}^i = \theta^{(p-2)i}}$, and ${\theta^a = \theta^{(p-2)i}}$ if and only if ${a \equiv i(p-2)\Mod{p-1}}$. This is equivalent to ${a\equiv (p-1-i)\Mod{p-1}}$.
\end{proof}
For ${0\leq a \leq p-2}$, ${0\leq j\leq p-1}$ and ${1\leq b\leq p-1}$, we define the following functions
\begin{equation}
    \label{psi(a,j) definition}
    \psi(a,j)
        =
        \left\{
            \begin{array}{ll}
                 1&\text{if }a < \ell_{[0:1],j}+1\text{ and }p-1-\ell_{[1:0],j}\leq a \\
                 -1&\text{if }a < p-1-\ell_{[1:0],j}\text{ and }\ell_{[0:1],j}+1\leq a\\
                 0&\text{otherwise}
            \end{array}
        \right.
\end{equation}
\begin{equation}
    \label{sigma(b) definition}
    \sigma(b)
    =
    \left\{
        \begin{array}{ll}
             1&\text{if }(2m+b-1)\equiv 0\Modwb{p}\text{ and }\ell_{[0:1],b-1} \in \{0,1\}  \\
             1&\text{if }(2m+b-1)\not\equiv 0\Modwb{p}\text{ and }\ell_{[0:1],b-1} = 0\\
             0&\text{otherwise}
        \end{array} 
    \right. .
\end{equation}
We now move onto the final bit of notation needed from \cite{bleher}: for ${z \in \Br(\lambda_T)}$, let ${y(z) \in C/U \cong \mathbb{P}^1(\mathbb{F})}$ be such that ${\lambda_T(y(z)) = z}$. For ${0\leq a\leq p-2}$ and ${0\leq j\leq p-1}$, let
\begin{equation}
    \label{n(a,j) definition}
    n(a,j) = n_j + \sum_{z \in \Br(\lambda_T)}
        \left(
            \sum_{d=1}^{\ell_{y(z),j}}\mu_{a,-d}(y(z))
            -
            \sum_{d=1}^{e_{y(z)}-1}\frac{d}{e_{y(z)}}\mu_{a,d}(y(z))
        \right) .
\end{equation}
\begin{lem}
    \label{n(a,j) lemma}
    We have that ${n(a,j)}$ as in (\ref{n(a,j) definition}) is given by
    $$
    {
        n(a,j)
        =
        1
        +
        m
        -
        \ceil*{\frac{m}{p-1}}
        +
        \floor*{\frac{m-j-\ceil*{\frac{2m+j}{p}}}{p-1}}
        +
        \psi(a,j)
    }
    $$
    where ${\psi(a,j)}$ is as given in (\ref{psi(a,j) definition}).
\end{lem}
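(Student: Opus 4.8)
The plan is to evaluate the sum over $\Br(\lambda_T)$ in (\ref{n(a,j) definition}) by hand and then add $n_j$ using Lemma \ref{n_j lemma}. By Corollary \ref{C/U T ramified points} the only ramified points of $\lambda_T$ are $[1:0]$ and $[0:1]$, each of ramification index $p-1$; since $\lambda_T(Y:Z)=[Y^{p-1}:Z^{p-1}]$, each of these is the unique point lying over its image, so in (\ref{n(a,j) definition}) the index $y(z)$ runs over $\{[1:0],\,[0:1]\}$ with $e_{y(z)}=p-1$. Hence it suffices to prove
$$
\sum_{P\in\{[1:0],\,[0:1]\}}\left(\ \sum_{d=1}^{\ell_{P,j}}\mu_{a,-d}(P)\ -\ \sum_{d=1}^{p-2}\frac{d}{p-1}\,\mu_{a,d}(P)\ \right)=\psi(a,j),
$$
after which $n(a,j)=n_j+\psi(a,j)$, and the explicit form drops out by inserting the value of $n_j$ from Lemma \ref{n_j lemma}.

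The next step is to compute the four inner sums using Lemma \ref{mu_{a,i} lemma}. Since $\mu_{a,i}([1:0])\ne 0$ exactly when $i\equiv a\pmod{p-1}$, since $\mu_{a,i}([0:1])\ne 0$ exactly when $i\equiv -a\pmod{p-1}$, and since $d$ runs through $\{1,\dots,p-2\}$ (a full set of nonzero residues mod $p-1$), each of the four sums contains at most one nonzero term. For $1\le a\le p-2$ this gives: the $[1:0]$-sums are supported at $d=p-1-a$ and $d=a$, contributing $1$ exactly when $a\ge p-1-\ell_{[1:0],j}$, and $\tfrac{a}{p-1}$, respectively; the $[0:1]$-sums are supported at $d=a$ and $d=p-1-a$, contributing $1$ exactly when $a\le\ell_{[0:1],j}$, and $\tfrac{p-1-a}{p-1}$, respectively. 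The two subtracted fractional terms add up to $\tfrac{a}{p-1}+\tfrac{p-1-a}{p-1}=1$, and after checking the boundary value $a=0$ directly (all four sums then vanish) one arrives at
$$
n(a,j)-n_j=\varepsilon_1+\varepsilon_2-1,
$$
where $\varepsilon_1=1$ if $a\ge p-1-\ell_{[1:0],j}$ and $\varepsilon_1=0$ otherwise, and $\varepsilon_2=1$ if $a\le\ell_{[0:1],j}$ and $\varepsilon_2=0$ otherwise.

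It then remains to compare $\varepsilon_1+\varepsilon_2-1$ with the three cases of (\ref{psi(a,j) definition}): if $\varepsilon_1=\varepsilon_2=1$ the value is $1$, matching the first case of $\psi(a,j)$; if $\varepsilon_1=\varepsilon_2=0$ it is $-1$, matching the second case; and if exactly one of $\varepsilon_1,\varepsilon_2$ is $0$ it is $0$, matching the ``otherwise'' case. I expect the only genuinely fiddly point — and hence the main obstacle — to be keeping the inequalities defining $\psi$ aligned with $\varepsilon_1,\varepsilon_2$ at the endpoints $a=0$ and $a=p-2$; here the bounds $\ell_{[1:0],j},\ell_{[0:1],j}\le p-2$ do the work, forcing $p-1-\ell_{[1:0],j}\ge 1$ and $\ell_{[0:1],j}+1\ge 1$, so that at $a=0$ neither of the first two cases of $\psi$ can apply and indeed $\psi(0,j)=0=n(0,j)-n_j$. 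Combining $n(a,j)-n_j=\psi(a,j)$ with the formula for $n_j$ from Lemma \ref{n_j lemma} then completes the proof.
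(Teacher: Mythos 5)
Your proposal is correct and follows essentially the same route as the paper's proof: reduce to showing the two bracketed terms over the branch points $[1:0]$ and $[0:1]$ sum to $\psi(a,j)$, locate the unique nonzero term in each of the four sums via Lemma \ref{mu_{a,i} lemma}, observe that the two fractional contributions add to $1$ for $a>0$, and match the resulting indicator expression against the case definition of $\psi(a,j)$. The only cosmetic difference is that the paper reindexes $\sum_{d=1}^{\ell}\mu_{a,-d}$ as $\sum_{d=p-1-\ell}^{p-2}\mu_{a,d}$ rather than pinpointing the supporting value of $d$ directly, but the computation is the same.
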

\begin{proof}
    By Lemma \ref{quotient of p1(F) by T} Lemma \ref{n_j lemma} the claim in the above statement is equivalent to
    $$
    {
        \left(
            \sum_{d=1}^{\ell_{[1:0],j}}\mu_{a,-d}([1:0])
            -
            \sum_{d=1}^{p-2}\frac{d}{p-1}\mu_{a,d}([1:0])
        \right)
        +
        \left(
            \sum_{d=1}^{\ell_{[0:1],j}}\mu_{a,-d}([0:1])
            -
            \sum_{d=1}^{p-2}\frac{d}{p-1}\mu_{a,d}([0:1])
        \right)
        =
        \psi(a,j).
    }
    $$
    From (\ref{mu_{a,i} definition}), we can write
    $$
    {
        \sum_{d=1}^{\ell_{[1:0],j}}\mu_{a,-d}([1:0])
        =
        \sum_{d=1}^{\ell_{[1:0],j}}\mu_{a,p-1-d}([1:0])
        =
        \sum_{d=p-1-\ell_{[1:0],j}}^{p-2}\mu_{a,d}([1:0]).
    }
    $$
    Using (\ref{mu_{a,i} definition}), this is ${1}$ if and only if ${p-1-\ell_{[1:0],j}\leq a\leq p-2}$. For the other sum,
    $$
    {
        \sum_{d=1}^{\ell_{[0:1],j}}\mu_{a,-d}([0:1])
        =
        \sum_{d=1}^{\ell_{[0:1],j}}\mu_{a,p-1-d}([0:1]).
    }
    $$
    Using (\ref{mu_{a,i} definition}), this is ${1}$ if and only if ${1\leq a \leq \ell_{[0:1],j}}$, and $0$ otherwise. Finally, consider the two sums
    $$
    {
        \sum_{d=1}^{p-2}\frac{d}{p-1}\mu_{a,d}([1:0])
        +
        \sum_{d=1}^{p-2}\frac{d}{p-1}\mu_{a,d}([0:1]).
    }
    $$
    The left-hand sum simplifies to ${0}$ if ${a=0}$, and to ${a/(p-1)}$ otherwise. Similarly, the right sum simplifies to $0$ if ${a=0}$ and to ${(p-1-a)/(p-1)}$ otherwise. Therefore, if ${a > 0}$ the two sums add to $1$, and to $0$ otherwise. In total, the four sums can be written as
    $$
    {
        \left\{
            \begin{array}{ll}
                 1&\text{if }p-1-\ell_{[1:0],j}\leq a\leq p-2  \\
                 0&\text{otherwise} 
            \end{array}
        \right.
        +
        \left\{
            \begin{array}{ll}
                 1&\text{if }1\leq a\leq \ell_{[0:1],j} \\
                 0&\text{otherwise} 
            \end{array}
        \right.
        -
        \left\{
            \begin{array}{ll}
                 1&\text{if }a > 0  \\
                 0&\text{otherwise} 
            \end{array}
        \right. ,
    }
    $$
    which indeed simplifies to ${\psi(a,j)}$ after some case taking.
\end{proof}

\begin{thm}
    \label{decomposition as FB module, q=p}
    We have the decomposition
    $$
    {
        \Res^G_B(H^0(C,\Omega_C^{\otimes m}))
        \cong
        \bigoplus_{b=1}^{p}\bigoplus_{a=0}^{p-2}U_{a,b}^{\oplus n_{a,b}}
    }
    $$
    where, for ${1\leq b\leq p-1}$,
    $$
    {
        n_{a,b} =
        \sigma(b)
        +
        \psi(a,b-1) - \psi(a,b)
    }
    $$
    and for ${b=p}$,
    $$
    {
        n_{a,p} =
        m - \ceil*{\frac{m}{p-1}} + \floor*{\frac{m-1-\ceil*{\frac{2m-1}{p}}}{p-1}} + \psi(a,p-1).
    }
    $$
\end{thm}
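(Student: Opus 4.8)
The plan is to apply \cite[Thm.\ 1.6]{bleher} verbatim to the $p$-hypo-elementary group $B = U \rtimes T$ (here $q=p$, so $U\cong\mathbb{Z}/p$ is a cyclic Sylow $p$-subgroup of $G$ and $T\cong\mathbb{Z}/(p-1)$) acting on $C$. That theorem expresses the multiplicities $n_{a,b}$ in the decomposition $\Res^G_B\bigl(H^0(C,\Omega_C^{\otimes m})\bigr)\cong\bigoplus_{a,b}U_{a,b}^{\oplus n_{a,b}}$ entirely in terms of the intermediate quotient $\lambda_T\colon C/U\cong\mathbb{P}^1(\mathbb{F})\to C/B\cong\mathbb{P}^1(\mathbb{F})$ (Proposition \ref{the quotients C/U and C/B}, Lemma \ref{quotient of p1(F) by T}), together with the divisor $E_j = m\mathcal{K}+D_j$ and the integers $\ell_{P,j}$, $n_j$ and $n(a,j)$ --- each of which has been put in closed form in Lemmas \ref{D_j lemma}--\ref{n(a,j) lemma}. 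Reading off Bleher's formula in our normalisation yields $n_{a,b}=n(a,b-1)-n(a,b)$ for $1\le b\le p-1$ (the non-projective indecomposables) and $n_{a,p}=n(a,p-1)$ for the projective indecomposable $U_{a,p}$; since these are multiplicities in an honest direct-sum decomposition, non-negativity is automatic, and the only remaining work is to massage these expressions into the stated shape.

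For $1\le b\le p-1$ I would substitute $n(a,j)=n_j+\psi(a,j)$ from Lemma \ref{n(a,j) lemma}, obtaining $n_{a,b}=(n_{b-1}-n_b)+\psi(a,b-1)-\psi(a,b)$, so the assertion reduces to the purely arithmetic identity $\sigma(b)=n_{b-1}-n_b$. By Lemma \ref{n_j lemma} the right-hand side is $\lfloor N_{b-1}/(p-1)\rfloor-\lfloor N_b/(p-1)\rfloor$ with $N_j = m-j-\lceil(2m+j)/p\rceil$; using that $\lceil k/p\rceil-\lceil(k-1)/p\rceil$ is $1$ when $k\equiv 1\pmod p$ and $0$ otherwise, one finds $N_{b-1}-N_b = 2$ if $p\mid 2m+b-1$ and $N_{b-1}-N_b=1$ otherwise, so (as $p-1\ge 2$) the difference of floors lies in $\{0,1\}$. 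When $p\nmid 2m+b-1$ it equals $1$ iff $N_b+1\equiv 0\pmod{p-1}$, and $N_b+1\equiv\ell_{[0:1],b-1}\pmod{p-1}$ by (\ref{definition of l_j}), reproducing the second line of (\ref{sigma(b) definition}); when $p\mid 2m+b-1$ it equals $1$ iff $N_b+1\equiv 0$ or $N_b+2\equiv 0\pmod{p-1}$, and since here $N_b+2\equiv\ell_{[0:1],b-1}\pmod{p-1}$, this reproduces the first line of (\ref{sigma(b) definition}). Hence $\sigma(b)=n_{b-1}-n_b$ and the formula for $n_{a,b}$, $1\le b\le p-1$, follows.

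For $b=p$, substituting $n(a,p-1)=n_{p-1}+\psi(a,p-1)$ and simplifying $n_{p-1}$ via $\lceil(2m+p-1)/p\rceil = 1+\lceil(2m-1)/p\rceil$ and $\lfloor\bigl(N-(p-1)\bigr)/(p-1)\rfloor = \lfloor N/(p-1)\rfloor-1$ gives $n_{p-1}=m-\lceil m/(p-1)\rceil+\lfloor(m-1-\lceil(2m-1)/p\rceil)/(p-1)\rfloor$, which is exactly the stated $n_{a,p}$ after adding $\psi(a,p-1)$. As a final sanity check I would confirm $\sum_{a=0}^{p-2}\sum_{b=1}^{p}b\,n_{a,b} = (2m-1)(g(C)-1) = \dim_{\mathbb{F}}H^0(C,\Omega_C^{\otimes m})$ (Proposition \ref{genus of C}), using $\sum_{a}\psi(a,j)=\ell_{[0:1],j}+\ell_{[1:0],j}-(p-2)$ together with the telescoping of $\sum_b b\,(n_{b-1}-n_b)$ and the values of $n_j$.

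I expect the principal difficulty to be of two kinds. First, bookkeeping: Bleher's Theorem 1.6 is stated in his own conventions, so one must carefully match his indexing of the indecomposable $\mathbb{F}[B]$-modules (socle versus top, and the two-step shift of $T$-characters along a uniserial block described in Proposition \ref{parameterisation of F[B] modules}) and his choice of lifts $y(z)$ of branch points against our fundamental characters (Corollary \ref{fundamental characters of P^1(F) under the action of T}), so that the formula really outputs the differences $n(a,b-1)-n(a,b)$ as claimed. Second, the floor--ceiling identity $\sigma(b)=n_{b-1}-n_b$: although elementary, it needs the exact two-case split above and the correct reduction of $N_b$ modulo $p-1$ against the definition of $\ell_{[0:1],b-1}$ --- a place where an off-by-one in the ceiling terms would silently break the match with (\ref{sigma(b) definition}).
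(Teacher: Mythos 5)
Your proposal is correct and follows essentially the same route as the paper: apply Bleher's Theorem 1.6 with $n=n_I=1$ to get $n_{a,b}=n(a,b-1)-n(a,b)$ and $n_{a,p}=n(a,p-1)$, substitute $n(a,j)=n_j+\psi(a,j)$ from Lemma \ref{n(a,j) lemma}, and verify the floor--ceiling identity $n_{b-1}-n_b=\sigma(b)$ via the two-case split on whether $p\mid 2m+b-1$ together with the reduction of $m-b+1-\lceil(2m+b-1)/p\rceil$ modulo $p-1$ against the definition of $\ell_{[0:1],b-1}$. The paper packages that last step as $-\lfloor(\ell_{[0:1],b-1}-\delta(b)-1)/(p-1)\rfloor$, but the computation is the same.
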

We note that the case of quadratic differentials (i.e. when ${m=2}$) is explicitly worked out in Corollary \ref{FBconvariants computation}.
\begin{proof}
    By \cite[Thm. 1.6]{bleher},
    $$
    {
        n_{a,b}
        =
        \left\{
            \begin{array}{ll}
                 n(a,j) - n(a,j+1)&\text{if }b=(j+1)p^{|U|-n_I}\text{ with }0\leq j\leq p^{n_I}-2  \\
                 n(a,p^{n_I}-1)&\text{if }b=p^{|U|}\\
                 0&\text{otherwise}
            \end{array}
        \right. .
    }
    $$
    In our case, ${n=1}$ and ${n_I = 1}$. And so
    $$
    {
        n_{a,b}
        =
        \left\{
            \begin{array}{ll}
                 n(a,b-1) - n(a,b)&\text{if }1\leq b\leq p-1  \\
                 n(a,p-1)&\text{if }b=p
            \end{array}
        \right. .
    }
    $$
    Using Lemma \ref{n(a,j) lemma}, for ${1\leq b\leq p-1}$ we get
    $$
    {
        n_{a,b}
        =
        \floor*{\frac{m-b+1-\ceil*{\frac{2m+b-1}{p}}}{p-1}}
        -
        \floor*{\frac{m-b-\ceil*{\frac{2m+b}{p}}}{p-1}}
        +
        \psi(a,b-1) - \psi(a,b).
    }
    $$
    We have that
    $$
    {
        \ceil*{\frac{2m+b}{p}}
        =
        \delta(b) + \ceil*{\frac{2m+b-1}{p}}
    }
    $$
    where ${\delta(b) = 1}$ if ${2m+b-1\equiv 0\Modwb{p}}$, and ${\delta(b) = 0}$ otherwise. Hence,
    $$
    {
        \floor*{\frac{m-b-\ceil*{\frac{2m+b}{p}}}{p-1}}
        =
        \floor*{\frac{m-b-\delta(b)-\ceil*{\frac{2m+b-1}{p}}}{p-1}}
        =
        \floor*{\frac{-\delta(b) - 1}{p-1} + \frac{m-b+1-\ceil*{\frac{2m+b-1}{p}}}{p-1}}.
    }
    $$
    From the definition of ${\ell_{[0:1],b-1}}$, we get
    $$
    {
        \frac{m - b + 1 - \ceil*{\frac{2m+b-1}{p}}}{p-1} = k + \frac{\ell_{[0:1],b-1}}{p-1}\text{ for some }k \in \mathbb{Z}
    }
    $$
    and so
    $$
    {
        \floor*{\frac{m-b+1-\ceil*{\frac{2m+b-1}{p}}}{p-1}}
        -
        \floor*{\frac{m-b-\ceil*{\frac{2m+b}{p}}}{p-1}}
        =
        \floor*{k + \frac{\ell_{[0:1],b-1}}{p-1}}
        -
        \floor*{k + \frac{\ell_{[0:1],b-1} - \delta(b) - 1}{p-1}}.
    }
    $$
    This simplifies to
    $$
    {
        -\floor*{\frac{\ell_{[0:1],b-1} - \delta(b) - 1}{p-1}}.
    }
    $$
    If ${\delta(b) = 1}$, i.e. ${2m+b-1\equiv 0\Modwb{p}}$, then this is $1$ if ${\ell_{[0:1],b-1} \in \{0,1\}}$ and $0$ otherwise. If ${\delta(b) = 0}$, i.e. ${2m+b-1 \not\equiv 0\Modwb{p}}$, then this is $1$ if ${\ell_{[0:1],b-1} = 0}$ and ${0}$ otherwise. From (\ref{sigma(b) definition}), we thus obtain
    $$
    {
        \floor*{\frac{m-b+1-\ceil*{\frac{2m+b-1}{p}}}{p-1}}
        -
        \floor*{\frac{m-b-\ceil*{\frac{2m+b}{p}}}{p-1}}
        =
        \sigma(b)
    }
    $$
    which proves that for ${1\leq b\leq p-1}$, we have ${n_{a,b} = \sigma(b) + \psi(a,b-1) - \psi(a,b)}$ as claimed. For ${n_{a,p}}$, from Lemma \ref{n(a,j) lemma} we get:
    $$
    {
        n_{a,p} =
        1 + m - \ceil*{\frac{m}{p-1}}
        +
        \floor*{\frac{m-(p-1)-\ceil*{\frac{2m+p-1}{p}}}{p-1}}
        +
        \psi(a,p-1)
    }
    $$
    which, after noting
    $$
    {
        \floor*{\frac{m-(p-1)-\ceil*{\frac{2m+p-1}{p}}}{p-1}}
        =
        -1
        +
        \floor*{\frac{m-1-\ceil*{\frac{2m-1}{p}}}{p-1}}
    }
    $$
    gives us the desired result.
\end{proof}
When ${p > 3m}$, Corollary \ref{final decomposition when p > 3m for B} below will simplify our result in Theorem \ref{decomposition as FB module, q=p}. To derive this simplification we need the following observations.
\begin{lem}
    \label{summing up the psis}
    For ${0\leq j\leq p-1}$, we have the following equality:
    $$
    {
        \sum_{a=0}^{p-2}\psi(a,j) = \ell_{[0:1],j} + \ell_{[1:0],j} - (p-2).
    }
    $$
\end{lem}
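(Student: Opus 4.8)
The plan is to unwind the definition of $\psi$ into a statement about counting integers in two intervals; after that the identity follows from elementary arithmetic, with no representation theory involved. Abbreviate $\ell_0 := \ell_{[0:1],j}$ and $\ell_1 := \ell_{[1:0],j}$, both lying in $[0,p-2]$ by (\ref{definition of l_j}). Reading off (\ref{psi(a,j) definition}), for $a$ in the summation range $0 \le a \le p-2$ one has $\psi(a,j) = 1$ precisely when $a \in [\,p-1-\ell_1,\ \ell_0\,]$ (the cap $a \le p-2$ being vacuous since $\ell_0 \le p-2$), and $\psi(a,j) = -1$ precisely when $a \in [\,\ell_0+1,\ p-2-\ell_1\,]$. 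These two index sets are disjoint, as the first forces $a \le \ell_0$ and the second forces $a \ge \ell_0+1$. Therefore $\sum_{a=0}^{p-2}\psi(a,j)$ equals the number of integers of $[0,p-2]$ in the first interval minus the number in the second.

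To finish I would distinguish cases on the sign of $\ell_0+\ell_1-(p-2)$. If $\ell_0+\ell_1 \ge p-2$, the second interval $[\ell_0+1,\,p-2-\ell_1]$ is empty while the first has $\ell_0-(p-1-\ell_1)+1 = \ell_0+\ell_1-(p-2)$ elements; if $\ell_0+\ell_1 \le p-3$, the first interval is empty while the second has $(p-2-\ell_1)-(\ell_0+1)+1 = (p-2)-\ell_0-\ell_1$ elements; and when $\ell_0+\ell_1 = p-2$ both intervals are empty, consistently with either branch. In every case the signed count comes out to $\ell_0+\ell_1-(p-2) = \ell_{[0:1],j}+\ell_{[1:0],j}-(p-2)$, as claimed.

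An even shorter route, if one is willing to quote an intermediate step rather than a theorem, is to reuse the identity established inside the proof of Lemma \ref{n(a,j) lemma}: there $\psi(a,j)$ is rewritten, for $0 \le a \le p-2$, as the sum of three $0/1$-valued functions of $a$ that equal $1$ exactly on $[\,p-1-\ell_1,\ p-1\,]$, on $[\,1,\ \ell_0\,]$, and (with an overall minus sign) on $[\,1,\ p-2\,]$. Summing these over $a = 0,\dots,p-2$ gives $\ell_1$, $\ell_0$, and $p-2$ respectively, and subtracting yields the claim at once.

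The only delicate point --- the ``main obstacle'', such as it is --- is endpoint clamping against the summation range: one must note that the lower endpoint $p-1-\ell_1$ of the positive interval never drops below $1$ (because $\ell_1 \le p-2$) and that $\ell_0 \le p-2$, so that neither interval is truncated by $\{0,1,\dots,p-2\}$. Once these bounds are in place the argument is a routine lattice-point count.
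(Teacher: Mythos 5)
Your proof is correct and follows essentially the same route as the paper: both arguments reduce to identifying the intervals $[p-1-\ell_{[1:0],j},\,\ell_{[0:1],j}]$ and $[\ell_{[0:1],j}+1,\,p-2-\ell_{[1:0],j}]$ on which $\psi(\cdot,j)$ equals $+1$ and $-1$ respectively, and then performing the same case split on the sign of $\ell_{[0:1],j}+\ell_{[1:0],j}-(p-2)$ to count lattice points. Your alternative shortcut via the three indicator functions from the proof of Lemma \ref{n(a,j) lemma} is a slightly slicker packaging of the identical computation, and your endpoint checks ($\ell_{[1:0],j}\le p-2$, $\ell_{[0:1],j}\le p-2$) correctly dispose of the only delicate point.
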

\begin{proof}
    We have
    $$
    {
        \sum_{a=0}^{p-2}\psi(a,j)
        =
        \sum_{a=0}^{p-2-\ell_{[1:0],j}}\psi(a,j) + \sum_{a=p-1-\ell_{[1:0],j}}^{p-2}\psi(a,j).
    }
    $$
    Note that in the region ${0\leq a\leq p-2-\ell_{[1:0],j}}$ we have that ${\psi(a,j)\leq 0}$. For ${p-1-\ell_{[1:0],j}\leq a\leq p-2}$, we have ${\psi(a,j) \geq 0}$. Furthermore, from the definition of ${\psi(a,j)}$, we have
    \begin{equation*}
        \begin{aligned}
            \sum_{a=0}^{p-2-\ell_{[1:0],j}}\psi(a,j)
        &=
        \left\{
            \begin{array}{ll}
                 -(p-2-\ell_{[1:0],j} - (\ell_{[0:1],j} + 1) + 1)&\text{if }\ell_{[0:1],j} + 1\leq p-2-\ell_{[1:0],j}  \\
                 0&\text{if }\ell_{[0:1],j} + 1\geq p-1-\ell_{[1:0],j}
            \end{array}
        \right.\\
        &=\left\{
            \begin{array}{ll}
                 \ell_{[0:1],j} + \ell_{[1:0],j} - (p-2)&\text{if }\ell_{[0:1],j} + 1\leq p-2-\ell_{[1:0],j}  \\
                 0&\text{if }\ell_{[0:1],j} + 1\geq p-1-\ell_{[1:0],j}
            \end{array}
        \right.
        \end{aligned}
    \end{equation*}
    and also
    \begin{equation*}
        \begin{aligned}
            \sum_{a=p-1-\ell_{[1:0],j}}^{p-2}\psi(a,j)
        &=
        \left\{
            \begin{array}{ll}
                 \ell_{[0:1],j} - (p - 1 - \ell_{[1:0],j}) + 1&\text{if }p-1-\ell_{[1:0],j}\leq \ell_{[0:1],j}  \\
                 0&\text{if }p-1-\ell_{[1:0],j}\geq \ell_{[0:1],j}+1
            \end{array}
        \right.\\
        &=\left\{
            \begin{array}{ll}
                 \ell_{[0:1],j} + \ell_{[1:0],j} - (p-2)&\text{if }p-1-\ell_{[1:0],j}\leq \ell_{[0:1],j}  \\
                 0&\text{if }p-1-\ell_{[1:0],j}\geq \ell_{[0:1],j}+1
            \end{array}
        \right. .
        \end{aligned}
    \end{equation*}
    We show that the sum of these sums is equal to ${\ell_{[0:1],j} + \ell_{[1:0],j} - (p-2)}$. We do this by taking cases on the inequalities. In two of the four resulting cases, this is obvious. Note that we cannot have both
    $$
    {
        \ell_{[0:1],j}+1\leq p-2-\ell_{[1:0],j}
        \text{ and }
        p-1-\ell_{[1:0],j}
        \leq \ell_{[0:1],j} 
    }
    $$
    because this would imply ${p-\ell_{[1:0],j}\leq p-2-\ell_{[1:0],j}}$. Finally, if
    $$
    {
        \ell_{[0:1],j}+1\geq p-1-\ell_{[1:0],j}
        \text{ and }
        p-1-\ell_{[1:0],j}\geq\ell_{[0:1],j}+1
    }
    $$
    this implies that ${p-1-\ell_{[1:0],j} = \ell_{[0:1],j}+1}$, and hence implies ${\ell_{[1:0],j} + \ell_{[0:1],j} = p-2}$. In this case, the individual sums are both $0$, and the total can still be written as ${\ell_{[1:0],j} + \ell_{[0:1],j} - (p-2)}$.
\end{proof}
\begin{cor}
    \label{n_b is either 1 or 2}
    For ${1\leq b\leq p}$, let
    $$
    {
        n_b =
        \sum_{a=0}^{p-2}n_{a,b}.
    }
    $$
    In other words, ${n_b}$ counts how many indecomposables we have of dimension ${b}$. Then we have
    $$
    {
        n_b =
        \left\{
            \begin{array}{ll}
                 2&\text{if }2m+b-1\equiv 0\Modwb{p}  \\
                 1&\text{if }2m+b-1\not\equiv 0\Modwb{p} 
            \end{array}
        \right. .
    }
    $$
\end{cor}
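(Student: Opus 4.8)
The plan is to sum the formula from Theorem~\ref{decomposition as FB module, q=p} over $a$ and exploit a telescoping between $\sigma(b)$ and the residues $\ell_{[0:1],j}$. For $1\leq b\leq p-1$ that theorem gives $n_{a,b}=\sigma(b)+\psi(a,b-1)-\psi(a,b)$, so
$$
n_b=\sum_{a=0}^{p-2}n_{a,b}=(p-1)\sigma(b)+\sum_{a=0}^{p-2}\psi(a,b-1)-\sum_{a=0}^{p-2}\psi(a,b).
$$
Applying Lemma~\ref{summing up the psis} to each of the two remaining sums, and using that $\ell_{[1:0],j}$ does not depend on $j$ (it is just the residue of $m(p-2)$ modulo $p-1$), the $\ell_{[1:0]}$-terms cancel and I am left with
$$
n_b=(p-1)\sigma(b)+\ell_{[0:1],b-1}-\ell_{[0:1],b}.
$$

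Next I would substitute the explicit form of $\sigma(b)$ that was established inside the proof of Theorem~\ref{decomposition as FB module, q=p}. Writing $f(j):=m-j-\lceil(2m+j)/p\rceil$, that proof shows $\sigma(b)=\floor*{\frac{f(b-1)}{p-1}}-\floor*{\frac{f(b)}{p-1}}$, while by~(\ref{definition of l_j}) the residue $\ell_{[0:1],j}$ is exactly $f(j)-(p-1)\floor*{\frac{f(j)}{p-1}}$. Plugging both of these into the last display, the floor parts of $\sigma(b)$ recombine with the residues and the whole expression telescopes to
$$
n_b=f(b-1)-f(b).
$$
A short computation then closes the argument: $f(b-1)-f(b)=1+\lceil(2m+b)/p\rceil-\lceil(2m+b-1)/p\rceil$, and for any integer $n$ the difference $\lceil(n+1)/p\rceil-\lceil n/p\rceil$ equals $1$ if $p\mid n$ and $0$ otherwise; taking $n=2m+b-1$ gives $n_b=2$ when $2m+b-1\equiv 0\pmod{p}$ and $n_b=1$ otherwise.

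The only delicate point is the middle step: one must check that the correction packaged into $\sigma(b)$ cancels the residue difference $\ell_{[0:1],b-1}-\ell_{[0:1],b}$ on the nose, not merely up to a multiple of $p-1$. This is automatic, since both quantities are pinned down exactly (not just modulo $p-1$) by the identities above. If one prefers to avoid invoking an intermediate computation from the proof of Theorem~\ref{decomposition as FB module, q=p}, the same conclusion can instead be reached by a direct case split according to whether $p\mid 2m+b-1$ and whether $\ell_{[0:1],b-1}$ lies in $\{0\}$ (respectively $\{0,1\}$), reading $\sigma(b)$ off its case-definition~(\ref{sigma(b) definition}) and $\ell_{[0:1],b}$ off~(\ref{definition of l_j}); this route is more tedious but entirely elementary, with all four subcases collapsing to the same two values.
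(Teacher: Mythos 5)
Your argument is correct, and its second half is genuinely different from (and cleaner than) the paper's. Both proofs begin identically: apply Lemma \ref{summing up the psis} to the sum of $n_{a,b}=\sigma(b)+\psi(a,b-1)-\psi(a,b)$ over $a$, note that the $\ell_{[1:0]}$-terms cancel because they do not depend on $j$, and arrive at $n_b=(p-1)\sigma(b)+\ell_{[0:1],b-1}-\ell_{[0:1],b}$. From there the paper introduces the congruence $\ell_{[0:1],b}\equiv \ell_{[0:1],b-1}-1-\delta(b)\Mod{p-1}$ and finishes by enumerating five cases in a table according to whether $p\mid 2m+b-1$ and whether $\ell_{[0:1],b-1}$ is $0$, $1$, or larger. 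You instead observe that with $f(j)=m-j-\ceil*{\frac{2m+j}{p}}$, the proof of Theorem \ref{decomposition as FB module, q=p} establishes the exact integer identity $\sigma(b)=\floor*{\frac{f(b-1)}{p-1}}-\floor*{\frac{f(b)}{p-1}}$, while $\ell_{[0:1],j}=f(j)-(p-1)\floor*{\frac{f(j)}{p-1}}$ exactly by (\ref{definition of l_j}); substituting both makes everything telescope to $n_b=f(b-1)-f(b)=1+\ceil*{\frac{2m+b}{p}}-\ceil*{\frac{2m+b-1}{p}}$, and the ceiling difference is $1$ or $0$ according as $p$ divides $2m+b-1$ or not. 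Your worry about the cancellation holding on the nose rather than merely modulo $p-1$ is correctly dispatched: both identities you invoke are exact equalities of integers, not congruences. What your route buys is the elimination of the case analysis and a conceptual reading of the answer ($n_b$ is the decrement of the degree function $f$ across one step in $b$); what the paper's route buys is self-containedness relative to the displayed statement of $\sigma(b)$ in (\ref{sigma(b) definition}), without reaching back into an intermediate computation inside the proof of Theorem \ref{decomposition as FB module, q=p} — though, as you note, the elementary case split recovers the same conclusion if one prefers that.
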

\begin{proof}
    For this, we use ${n_{a,b} = \sigma(b) + \psi(a,b-1) - \psi(a,b)}$ (Theorem \ref{decomposition as FB module, q=p}) and take cases. Note that by Lemma \ref{summing up the psis}, we have
    $$
    {
        n_b = (p-1)\sigma(b) + \left(\ell_{[0:1],b-1} + \ell_{[1:0],{b-1}} - (p-2)\right)
        -
        \left(\ell_{[0:1],b} + \ell_{[1:0],b} - (p-2)\right)
    }
    $$
    which simplifies to
    $$
    {
        n_b = (p-1)\sigma(b) + \ell_{[0:1],b-1} - \ell_{[0:1],b}.
    }
    $$
    Firstly, note that if ${2m+b-1 \equiv 0 \Modwb{p}}$, then
    $$
    {
        \begin{array}{lll}
             \ell_{[0:1],b}&\equiv&\left(m-b-\ceil*{\frac{2m+b}{p}}\right)\Mod{p-1}  \\
             &\equiv&\left(m-b+1-\ceil*{\frac{2m+b-1}{p}} - 2\right)\Mod{p-1} \\
             &\equiv&\left(\ell_{[0:1],b-1} - 2\right)\Mod{p-1}
        \end{array} .
    }
    $$
    If ${2m+b-1\not\equiv 0\Modwb{p}}$, then we get a similar relation: ${\ell_{[0:1],b} \equiv \left(\ell_{[0:1],b-1} - 1\right)\Mod{p-1}}$. Enumerating the possible cases, we get the following table:
    \begin{table}[H]
\centering
\begin{tabular}{|l|l|l|l|l|}
\hline
\rowcolor[HTML]{9B9B9B}
${2m+b-1\equiv 0 \Mod{p}}$ & ${\ell_{[0:1],{b-1}}}$ & ${\ell_{[0:1],b}}$       & ${\sigma(b)}$ & ${n_b = (p-1)\sigma(b) + \ell_{[0:1],b-1} - \ell_{[0:1],b}}$ \\ \hline
True                       & $0$                    & ${p-3}$                  & $1$           & ${(p-1) - (p-3) = 2}$                                        \\ \hline
True                       & $1$                    & $p-2$                      & $1$           & ${(p-1) + 1 - (p-2) = 2}$                                    \\ \hline
True                       & ${\geq 2}$             & ${\ell_{[0:1],b-1} - 2}$ & $0$           & ${0 + \ell_{[0:1],b-1} - (\ell_{[0:1],b-1} - 2) = 2}$        \\ \hline
False                      & $0$                    & ${p-2}$                  & $1$           & ${(p-1) + 0 - (p-2) = 1}$                                    \\ \hline
False                      & ${\geq 1}$             & ${\ell_{[0:1],b-1} - 1}$ & $0$           & ${\ell_{[0:1],b-1} - (\ell_{[0:1],b-1} - 1) = 1}$            \\ \hline
\end{tabular}
\end{table}
    This concludes the proof of the corollary.
\end{proof}
\begin{cor}
    \label{each n_{a,b} is either 0 or 1}
    Let ${1\leq b\leq p-1}$. If ${2m+b-1\not\equiv 0\Modwb{p}}$, there is exactly one value ${0\leq a_*\leq p-2}$ such that ${n_{a_*,b} = 1}$, with all other ${n_{a,b} = 0}$. If ${2m+b-1 \equiv 0\Modwb{p}}$, there are exactly two different \\${0\leq a_*,a_*'\leq p-2}$ such that ${n_{a_*,b} = n_{a_*',b} = 1}$, with all other ${n_{a,b} = 0}$.
\end{cor}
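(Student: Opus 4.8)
The plan is to begin from the formula $n_{a,b}=\sigma(b)+\psi(a,b-1)-\psi(a,b)$ of Theorem~\ref{decomposition as FB module, q=p} and to exploit the observation, recorded just after~(\ref{definition of l_j}), that $\ell_{[1:0],j}$ is independent of $j$; write $L$ for this common value. For a condition $\mathrm{P}$, let $\epsilon(\mathrm{P})$ equal $1$ if $\mathrm{P}$ holds and $0$ otherwise. Comparing the three alternatives in~(\ref{psi(a,j) definition}) with the ranges $p-1-L\le a\le \ell_{[0:1],j}$ (on which $\psi=1$) and $\ell_{[0:1],j}+1\le a\le p-2-L$ (on which $\psi=-1$), a routine case check over $0\le a\le p-2$ yields
\[
\psi(a,j)=\epsilon\bigl(a\le \ell_{[0:1],j}\bigr)-\epsilon\bigl(a\le p-2-L\bigr).
\]
Since the second term is the same for $j=b-1$ and $j=b$, it cancels in $n_{a,b}$, leaving the compact formula
\[
n_{a,b}=\sigma(b)+\epsilon\bigl(a\le \ell_{[0:1],b-1}\bigr)-\epsilon\bigl(a\le \ell_{[0:1],b}\bigr).
\]

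Next I would feed in the two congruences isolated inside the proof of Corollary~\ref{n_b is either 1 or 2}: if $2m+b-1\equiv 0\Modwb{p}$ then $\ell_{[0:1],b}\equiv \ell_{[0:1],b-1}-2\Mod{p-1}$ and $\sigma(b)=1$ precisely when $\ell_{[0:1],b-1}\in\{0,1\}$, whereas if $2m+b-1\not\equiv 0\Modwb{p}$ then $\ell_{[0:1],b}\equiv \ell_{[0:1],b-1}-1\Mod{p-1}$ and $\sigma(b)=1$ precisely when $\ell_{[0:1],b-1}=0$. Reducing these congruences to their representatives in $[0,p-2]$ (here one uses $p\ge 3$) one checks that $\sigma(b)=1$ forces $\ell_{[0:1],b}\in\{p-3,p-2\}$, hence $\ell_{[0:1],b}\ge \ell_{[0:1],b-1}$, while $\sigma(b)=0$ forces $\ell_{[0:1],b}=\ell_{[0:1],b-1}-1$ or $\ell_{[0:1],b-1}-2$, hence $\ell_{[0:1],b}<\ell_{[0:1],b-1}$. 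Substituting into the compact formula: when $\sigma(b)=1$, the inequality $a\le \ell_{[0:1],b-1}$ implies $a\le \ell_{[0:1],b}$ and $n_{a,b}=1$, while $a>\ell_{[0:1],b-1}$ gives $n_{a,b}=1-\epsilon(a\le \ell_{[0:1],b})\in\{0,1\}$; when $\sigma(b)=0$, the inequality $a\le \ell_{[0:1],b}$ implies $a\le \ell_{[0:1],b-1}$ and $n_{a,b}=0$, the intermediate range $\ell_{[0:1],b}<a\le \ell_{[0:1],b-1}$ gives $n_{a,b}=1$, and $a>\ell_{[0:1],b-1}$ gives $n_{a,b}=0$. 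Thus $n_{a,b}\in\{0,1\}$ in all cases.

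Finally I would sum over $a$: by Corollary~\ref{n_b is either 1 or 2} we have $\sum_{a=0}^{p-2}n_{a,b}=n_b$, which equals $2$ when $2m+b-1\equiv 0\Modwb{p}$ and $1$ otherwise; since each $n_{a,b}$ is $0$ or $1$, the number of indices $a\in[0,p-2]$ with $n_{a,b}=1$ is exactly $n_b$, which is the assertion. There is no conceptual obstacle in this argument — it is essentially bookkeeping — and the only steps that demand care are the case-by-case verification of the identity for $\psi$ and the correct reduction of the two congruences modulo $p-1$; the one genuinely useful idea is that the $j$-independent part of $\psi$ drops out of the difference $\psi(a,b-1)-\psi(a,b)$.
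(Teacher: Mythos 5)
Your argument is correct, and it reaches the conclusion by a somewhat different route than the paper. Both proofs start from $n_{a,b}=\sigma(b)+\psi(a,b-1)-\psi(a,b)$ and both ultimately appeal to Corollary \ref{n_b is either 1 or 2} to count how many $a$ give $n_{a,b}=1$; the difference is in how one shows $n_{a,b}\le 1$. The paper rules out $n_{a,b}=2$ by contradiction: it observes that $\psi(a,b-1)=1,\ \psi(a,b)=-1$ is impossible (precisely because $\ell_{[1:0],j}$ does not depend on $j$, the same fact you use), so $n_{a,b}=2$ would force $\sigma(b)=1$, whence $n_{0,b}=1$ and $n_b\ge 3$, contradicting Corollary \ref{n_b is either 1 or 2}. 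You instead rewrite $\psi(a,j)=\epsilon(a\le \ell_{[0:1],j})-\epsilon(a\le p-2-L)$ (I checked the four cases; the identity holds, and the $j$-independent term does cancel in the difference), arriving at the closed form $n_{a,b}=\sigma(b)+\epsilon(a\le \ell_{[0:1],b-1})-\epsilon(a\le \ell_{[0:1],b})$, and then verify $n_{a,b}\in\{0,1\}$ directly from the monotonicity relation between $\ell_{[0:1],b-1}$ and $\ell_{[0:1],b}$ (which is exactly the case table inside the proof of Corollary \ref{n_b is either 1 or 2}). Your version buys slightly more: it exhibits explicitly \emph{which} $a$ satisfy $n_{a,b}=1$ (e.g.\ the interval $\ell_{[0:1],b}<a\le \ell_{[0:1],b-1}$ when $\sigma(b)=0$), information the paper only extracts later, under the hypothesis $p>3m$, in Corollary \ref{final decomposition when p > 3m}; it also shows $n_{a,b}\ge 0$ without invoking that the $n_{a,b}$ are multiplicities. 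The paper's version is shorter and avoids the case-by-case reduction of $\ell_{[0:1],b}$ modulo $p-1$. No gaps.
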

\begin{proof}
    If ${2m+b-1\not\equiv 0\Modwb{p}}$, then this follows immediately from Corollary \ref{n_b is either 1 or 2}, since each ${n_{a,b}}$ is a non-negative integer. Suppose that ${2m+b-1 \equiv 0\Modwb{p}}$. Then by Corollary \ref{n_b is either 1 or 2} it suffices to show that there does not exist ${0\leq a\leq p-2}$ with ${n_{a,b} = 2}$. It is not possible to have ${\psi(a,b-1) - \psi(a,b) = 2}$. To see this, note that since ${\psi(a,j) \in \{-1,0,1\}}$, this would imply ${\psi(a,b-1) = 1}$ and ${\psi(a,b) = -1}$. This is impossible from the definition of ${\psi(a,j)}$, because of the condition imposed on ${\ell_{[1:0],b-1} = \ell_{[1:0],b}}$. Hence, since ${n_{a,b} = \sigma(b) + \psi(a,b-1) - \psi(a,b)}$ (Theorem \ref{decomposition as FB module, q=p}), we conclude ${n_{a,b} = 2}$ if and only if ${\sigma(b) = 1}$ and ${\psi(a,b-1) - \psi(a,b) = 1}$. However, if ${\sigma(b) = 1}$, note that ${n_{0,b} = \sigma(b) + \psi(0,b-1) - \psi(0,b) = 1 + 0-0 = 1}$. Thus we obtain ${n_b = n_{a,b} + n_{0,b} = 2 + 1 = 3}$, which contradicts Corollary \ref{n_b is either 1 or 2}.
\end{proof}
One may now hope that given ${1\leq b\leq p-1}$, it might be possible to explicitly find the values of $a$ with ${n_{a,b} = 1}$. Unfortunately, in the general case for any prime $p$ and ${m \geq 2}$, it is very hard to predict the exact outcome of the ${n_{a,b}}$ values. But if $p$ is taken to be large enough relative to $m$, the situation does stabilise:
\begin{cor}
    \label{final decomposition when p > 3m for B}
    When ${p}$ is a prime satisfying ${p > 3m}$, we get the following decomposition of ${H^0(C,\Omega_C^{\otimes m})}$ as an ${\mathbb{F}[B]}$ module:
    \begin{align*}
        \Res^G_B(H^0(C,\Omega_C^{\otimes m}))
        &\cong \bigoplus_{b=1}^{m}U_{m-b,b} 
            \oplus
            \bigoplus_{b=m+1}^{p-2m+1}U_{p-1+m-b,b}
            \oplus
            \bigoplus_{b=p-2m+1}^{p-1}U_{p-2+m-b,b}\\
            &\oplus
            \bigoplus_{a=0,\ a\neq m-1}^{p-2}U_{a,p}^{\oplus (m-1)}
            \oplus
            U_{m-1,p}^{\oplus (m-2)}.
    \end{align*}
\end{cor}
\begin{proof}
    For ${0\leq j\leq p-1}$, we first compute ${\ell_{[0:1],j}}$ and ${\ell_{[1:0],j}}$. We have
    $$
    {
        \ell_{[1:0],j} \equiv m(p-2)\Mod{p-1} \equiv p-1-m\Mod{p-1}.
    }
    $$
    Since ${p > 3m}$, we obtain ${0\leq p-1-m\leq p-2}$, and hence ${\ell_{[1:0],j} = p-1-m}$. Next,
    $$
    {
        \ell_{[0:1],j}
        \equiv
        \left(
            m-j-\ceil*{\frac{2m+j}{p}}
        \right)\Mod{p-1}
        \equiv
        \left\{
            \begin{array}{ll}
                 m-j-1&0\leq j\leq p-2m  \\
                 m-j-2&p-2m+1\leq j\leq p-1 
            \end{array}
        \right.
        \Mod{p-1}
    }
    $$
    and so
    $$
    {
        \ell_{[0:1],j}
        =
        \left\{
            \begin{array}{ll}
              m-1-j&0\leq j\leq m-1\\
              p - 2 + m-j&m\leq j\leq p-2m\\
             p-3 + m-j&p-2m+1\leq j\leq p-1
            \end{array}
        \right. .
    }
    $$
    Note that the bounds on $j$ above are non-empty because (1) ${m < p-2m}$ from our assumption that ${p > 3m}$, and (2) ${p-2m + 1 < p-1}$ is equivalent to ${-2m + 2 < 0}$, which is always true when ${m\geq 2}$. We also check that the expressions given are between ${0}$ and ${p-2}$ (inclusive):
    % Please add the following required packages to your document preamble:
% \usepackage[table,xcdraw]{xcolor}
% If you use beamer only pass "xcolor=table" option, i.e. \documentclass[xcolor=table]{beamer}
\begin{table}[H]
\begin{tabular}{|l|l|l|l|}
\hline
\rowcolor[HTML]{9B9B9B}
Expression    & Range of $j$             & Maximal value of expression & Minimal value of expression \\ \hline
${m-1-j}$     & ${0\leq j\leq m-1}$      & ${m-1}$                         & ${0}$                           \\ \hline
${p-2 + m-j}$ & ${m\leq j\leq p-2m}$     & ${p-2}$                         & ${3m-2}$                        \\ \hline
${p-3 + m-j}$ & ${p-2m+1\leq j\leq p-1}$ & ${3m-4}$                        & ${m-2}$                         \\ \hline
\end{tabular}
\end{table}
    Because of our assumption ${p > 3m}$ the minimal and maximal values all lie between $0$ and ${p-2}$ inclusive, and thus it must be the case the ${\ell_{[0:1],j}}$ do as well for all $j$ within their respective ranges.\\
    \\
    As stated in Corollary \ref{each n_{a,b} is either 0 or 1}, for ${1\leq b\leq p-1}$, if ${2m + b-1 \not\equiv 0\Modwb{p}}$ there is only one value ${0\leq a\leq p-2}$ such that ${n_{a,b} = 1}$, and two values if ${2m + b-1 \equiv 0 \Modwb{p}}$. As ${p > 3m}$, ${2m+b-1 \equiv 0 \Modwb{p}}$ if and only if ${2m + b-1 = p}$, i.e. ${b = p - 2m + 1}$. We show these values of $a$ are given by
    $$
    {
        1 =
        \left\{
        \begin{array}{ll}
             n_{m-b,b}&\text{for }1\leq b\leq m  \\
             n_{p-1+m-b,b}&\text{for }m+1\leq b\leq p-2m\\
             n_{3m-3,p-2m+1}&\\
             n_{3m-2,p-2m+1}&\\
             n_{p-2+m-b,b}&\text{for }p-2m+2\leq b\leq p-1\\
        \end{array}
        \right.
    }
    $$
    i.e.
    $$
    {
        1
        =
        \left\{
        \begin{array}{ll}
             n_{m-b,b}&\text{for }1\leq b\leq m  \\
             n_{p-1+m-b,b}&\text{for }m+1\leq b\leq p-2m+1\\
             n_{p-2+m-b,b}&\text{for }p-2m+1\leq b\leq p-1\\
        \end{array}
        \right. .
    }
    $$
    Recall by Theorem \ref{decomposition as FB module, q=p}, ${n_{a,b} = \sigma(b) + \psi(a,b-1) - \psi(a,b)}$ (where ${\psi}$,${\sigma}$ are defined in eq. (\ref{psi(a,j) definition}),(\ref{sigma(b) definition}) respectively). For ${1\leq b\leq p-1}$, we have
    $$
    {
        \sigma(b) =
        \left\{
            \begin{array}{ll}
                 1&\text{if }b=m  \\
                 0&\text{otherwise} 
            \end{array}
        \right. .
    }
    $$
    To see this, note that (1) ${\ell_{[0:1],b-1} = 0}$ if and only if ${b = m}$, and (2) ${\ell_{[0:1],p-2m} = 3m-2 \notin \{0,1\}}$ for ${m\geq 2}$.
    Hence for ${b\neq m}$, it suffices to verify ${\psi(a,b-1) - \psi(a,b) = 1}$ for the given values of $a$. Firstly, in the range ${1\leq b\leq m-1}$ we have
    \begin{align*}
        &\psi(m-b,b-1) - \psi(m-b,b)\\
        =&\left\{
            \begin{array}{ll}
                 1&\text{if }m-b < m-b+1\text{ and }m\leq m-b\leq p-2 \\
                 -1&\text{if }m-b < m\text{ and }m-b+1\leq m-b\leq p-2\\
                 0&\text{otherwise}
            \end{array}
        \right.
        \\
        &-\left\{
            \begin{array}{ll}
                 1&\text{if }m-b < m-b\text{ and }m\leq a\leq p-2 \\
                 -1&\text{if }m-b < m\text{ and }m-b\leq m-b\leq p-2\\
                 0&\text{otherwise}
            \end{array}
        \right. \\
        =&\ \ 0-(-1)=1
    \end{align*}
    as claimed. When ${b=m}$, ${\sigma(m) = 1}$ and
    \begin{align*}
        &\psi(m,m-1) - \psi(m,m)\\
        =&\left\{
            \begin{array}{ll}
                 1&\text{if }m < 1\text{ and }m\leq m\leq p-2 \\
                 -1&\text{if }m < m\text{ and }1\leq m\leq p-2\\
                 0&\text{otherwise}
            \end{array}
        \right.
        -\left\{
            \begin{array}{ll}
                 1&\text{if }m < p-1\text{ and }m\leq m\leq p-2 \\
                 -1&\text{if }m < m\text{ and }p-1\leq m\leq p-2\\
                 0&\text{otherwise}
            \end{array}
        \right.\\
        =&\ \ \ 0-0=0.
    \end{align*}
    Hence ${n_{0,m} = 1}$. Indeed, this $a$ value satisfies ${a = m-b}$. Next, we consider the range ${m+1\leq b\leq p-2m}$. Note that ${0\leq p-1 + m-b\leq p-2}$ in this region, and
    \begin{align*}
        &\psi(p-1 + m-b,b-1) - \psi(p-1 + m-b,b)\\
        =&\left\{
            \begin{array}{ll}
                 1&\text{if }p - 1 + m-b < p + m - b\text{ and }m\leq p-1 + m-b\leq p-2 \\
                 -1&\text{if }p-1 + m-b < m\text{ and }p + m-b\leq p-1 + m-b\leq p-2\\
                 0&\text{otherwise}
            \end{array}
        \right.\\
        &-\left\{
            \begin{array}{ll}
                 1&\text{if }p - 1 + m-b < p - 1 + m - b\text{ and }m\leq p-1 + m-b\leq p-2 \\
                 -1&\text{if }p-1 + m-b < m\text{ and }p-1 + m-b\leq p-1 + m-b\leq p-2\\
                 0&\text{otherwise}
            \end{array}
        \right.\\
        =&\ \ \ 1-0=1
    \end{align*}
    as claimed. We now consider the case ${b = p-2m+1}$ separately, since this is the unique value of $b$ for which we must find two values for $a$. Note that ${0\leq 3m-3\leq p-2}$ and ${0\leq 3m-2\leq p-2}$. Furthermore, we have
    \begin{align*}
        &\psi(a,p-2m) - \psi(a,p-2m + 1)\\
        =&\left\{
            \begin{array}{ll}
                 1&\text{if }a < 3m-1\text{ and }m\leq a\leq p-2 \\
                 -1&\text{if }a < m\text{ and }3m-1\leq a\leq p-2\\
                 0&\text{otherwise}
            \end{array}
        \right.
        -
        \left\{
            \begin{array}{ll}
                 1&\text{if }a < 3m-3\text{ and }m\leq a\leq p-2 \\
                 -1&\text{if }a < m\text{ and }3m-3\leq a\leq p-2\\
                 0&\text{otherwise}
            \end{array}
        \right. .
    \end{align*}
    The above expression is $1$ when ${a \in \{3m-3,3m-2\}}$, as claimed. Next, we consider the range\\ ${p-2m+2\leq b\leq p-2}$. We get ${0\leq p-2 + m-b\leq p-2}$, and
    \begin{align*}
        &\psi(p-2 + m-b,b-1) - \psi(p-2 + m-b,b)\\
        =&\left\{
            \begin{array}{ll}
                 1&\text{if }p-2 + m-b < p-1+m-b\text{ and }m\leq p-2 + m-b\leq p-2\\
                 -1&\text{if }p-2 + m-b < m\text{ and }p-1+m-b\leq p-2 + m-b\leq p-2 \\
                 0&\text{ otherwise}
            \end{array}
        \right.\\
        &-\left\{
            \begin{array}{ll}
                 1&\text{if }p-2 + m-b < p-2+m-b\text{ and }m\leq p-2 + m-b\leq p-2\\
                 -1&\text{if }p-2 + m-b < m\text{ and }p-2+m-b\leq p-2 + m-b\leq p-2 \\
                 0&\text{otherwise}
            \end{array}
        \right.\\
        =&\ \ \ 1-0=1
    \end{align*}
    as claimed. When ${b=p-1}$, note that
    \begin{align*}
        &\psi(m-1,p-2) - \psi(m-1,p-1)\\
        =&\left\{
            \begin{array}{ll}
                 1&\text{if }m-1 < m\text{ and }m\leq m-1\leq p-2  \\
                 -1&\text{if }m-1 < m\text{ and }m\leq m-1\leq p-2 \\
                 0&\text{ otherwise}
            \end{array}
        \right.
        -
        \left\{
            \begin{array}{ll}
                 1&\text{if }m-1 < m-1\text{ and }m\leq m-1\leq p-2  \\
                 -1&\text{if }m-1 < m\text{ and }m-1\leq m-1\leq p-2 \\
                 0&\text{otherwise}
            \end{array}
        \right.\\
        =&\ \ \ 0-(-1) = 1 .
    \end{align*}
    This $a$ value indeed satisfies ${a=p-2 + m-b}$.\\
    Finally, we consider ${b=p}$. Note that using Theorem \ref{decomposition as FB module, q=p} with the assumption ${p > 3m}$ yields
    \begin{align*}
        n_{a,p} &= m - \ceil*{\frac{m}{p-1}} + \floor*{\frac{m-1-\ceil*{\frac{2m-1}{p}}}{p-1}} + \psi(a,p-1)\\
        &=m-1 + \floor*{\frac{m-1-1}{p-1}} + \psi(a,p-1)\\
        &=m-1 + \psi(a,p-1)\\
        &=m-1 + \left\{
            \begin{array}{ll}
                 1&\text{if }a < m-1\text{ and }m\leq a  \\
                 -1&\text{if }a < m\text{ and }m-1\leq a \\
                 0&\text{ otherwise}
            \end{array}
                \right.\\
        &= \left\{
                \begin{array}{ll}
                     m-1&\text{if }a\neq m-1  \\
                     m-2&\text{if }a = m-1 
                \end{array}
            \right. .
    \end{align*}
    This concludes the proof of the corollary.
\end{proof}
We finish off this section by computing  the dimension of the space of $B$-coinvariants ${\dim_{\mathbb{F}}H^0(C,\Omega_C^{\otimes 2})_B}$. As explained in \cite[\S 3]{bleher}, this has applications to deformation theory.
\begin{cor}
    \label{FBconvariants computation}
    We have
    $$
    {
        \dim_{\mathbb{F}}H^0(C,\Omega_C^{\otimes 2})_B
        =
        \left\{
            \begin{array}{ll}
                 2&\text{if }p=3  \\
                 1&\text{if }p\geq 5  \\
            \end{array}
        \right. .
    }
    $$
\end{cor}
\begin{proof}
    Taking coinvariants is additive. Furthermore, since the ${\mathbb{F}[B]}$ modules ${U_{a,b}}$ are uniserial, ${(U_{a,b})_B \neq 0}$ if and only if ${U_{a,b}/\Rad(U_{a,b}) \cong S_0}$. Using Proposition \ref{parameterisation of F[B] modules}, the top composition factor ${U_{a,b}/\Rad(U_{a,b})}$ of ${U_{a,b}}$ is given by ${S_{a + 2(b-1)}}$. Hence, ${(U_{a,b})_B \neq 0}$ if and only if ${a + 2(b-1) \equiv 0 \Mod{p-1}}$. When ${p > 5}$, by Corollary \ref{final decomposition when p > 3m for B}
    \begin{align*}
        \Res^G_B(H^0(C,\Omega_C^{\otimes 2}))
        &\cong \bigoplus_{b=1}^{2}U_{2-b,b} 
            \oplus
            \bigoplus_{b=3}^{p-3}U_{p+1-b,b}
            \oplus
            \bigoplus_{b=p-3}^{p-1}U_{p-b,b}
            \oplus
            \bigoplus_{a=0,\ a\neq 1}^{p-2}U_{a,p}.
    \end{align*}
    Note that
    \begin{align*}
        b \in [1,2]:&\; 2-b + 2(b-1) = b\not\equiv 0\Mod{p-1}\\
        b \in [3,p-3]:&\; p+1-b + 2(b-1) = p-1 + b\not\equiv 0\Mod{p-1}\\
        b \in [p-3,p-1]:&\; p-b + 2(b-1) = p-2+b \equiv b-1\Mod{p-1}\not\equiv 0\Mod{p-1}\\
        a \in [0,p-2],\ a\neq 1:&\; a + 2(p-1) \equiv a\Mod{p-1}.
    \end{align*}
    Hence ${\dim_{\mathbb{F}}H^0(C,\Omega_C^{\otimes 2})_B = 1}$ when ${p > 5}$. When ${p=5}$, it can be verified using Theorem \ref{decomposition as FB module, q=p} that
    $$
    {
        \Res^G_B(H^0(C,\Omega_C^{\otimes 2})) \cong
        U_{0,2} \oplus U_{0,5} \oplus U_{1,1} \oplus
        U_{1,4} \oplus U_{2,3} \oplus U_{2,5} \oplus
        U_{3,2} \oplus U_{3,5}.
    }
    $$
    Hence ${\dim_{\mathbb{F}}H^0(C,\Omega_C^{\otimes 2})_B = 1}$ also when ${p=5}$. Finally, when ${p=3}$, it can be verified using Theorem \ref{decomposition as FB module, q=p} that
    $$
    {
        \Res^G_B(H^0(C,\Omega_C^{\otimes 2})) \cong
        U_{0,1} \oplus U_{0,3} \oplus U_{1,2}.
    }
    $$
    Hence ${\dim_{\mathbb{F}}H^0(C,\Omega_C^{\otimes 2})_B = 2}$ when ${p=3}$.
\end{proof}

\section{Composition factors of ${H^0(C,\Omega_C^{\otimes m})}$ as an ${\mathbb{F}[G]}$ module}
In this section, we compute the composition factors of ${H^0(C,\Omega_C^{\otimes m})}$ as an ${\mathbb{F}[G]}$ module. As explained in Remark \ref{F[G] module decomposition of H^0}, the ${\mathbb{F}[G]}$ module decomposition of ${H^0(C,\Omega_C^{\otimes m})}$ is then a direct corollary of Theorem \ref{decomposition as FB module, q=p} and Theorem \ref{lifting decomposition}.\\
\\
We derive our result using two different methods. The first method utilises results about the equivariant Euler characteristic from \cite{Kock_2004}. The second method provides a composition series using our explicit basis (Proposition \ref{basis for holomorphic polydifferentials}), thus yielding additional structural information. For ${0\leq t\leq p-1}$, let
\begin{equation}
    \label{sigma_t}
    \sigma_t = m - \ceil*{\frac{m+t}{p-1}}.
\end{equation}
\begin{thm}
    \label{composition factors of H^0}
    Both ${V_1}$ and ${V_p}$ occur as a composition factor of ${H^0(C,\Omega_{C}^{\otimes m})}$ with multiplicity ${1 + \sigma_{p-1}}$. For ${2\leq i\leq p-1}$, ${V_i}$ occurs as a composition factor of ${H^0(C,\Omega_C^{\otimes m})}$ with multiplicity ${1 + \sigma_{i-1} + \sigma_{p-i}}$.
\end{thm}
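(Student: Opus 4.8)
The plan is to sharpen the decomposition $H^0(C,\Omega_C^{\otimes m})\cong\bigoplus_{k=0}^{q}W_k$ of Proposition~\ref{partial decomposition of polydifferentials} by identifying each $W_k$ with an explicit symmetric power of the standard $2$-dimensional module, and then to read off the composition factors from the classical submodule structure of symmetric powers of the natural $\mathbb F[\mathrm{SL}_2(\mathbb F_p)]$-module.

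For $0\le k\le q$ let $D_k$ denote the largest integer with $D_k\equiv k\pmod{q+1}$ and $0\le D_k\le m(q-2)$ (and $W_k=0$ if no such integer exists). First I would prove that $W_k\cong\mathrm{Sym}^{D_k}(V_2)$ as $\mathbb F[G]$-modules, where $V_2=\Span_{\mathbb F}\{x,y\}$ is the standard module of Proposition~\ref{simple F[G] modules}. Put $M_d:=\Span_{\mathbb F}\{\omega_{ij}:i,j\ge 0,\ i+j=d\}$ for $d\le m(q-2)$; this is a $G$-submodule of $H^0(C,\Omega_C^{\otimes m})$ (holomorphy by Proposition~\ref{basis for holomorphic polydifferentials}, and $G$-stability by Lemma~\ref{action of element on holomorphic polydifferentials}, since $\omega_{ij}\cdot\sigma$ is a sum of $\omega_{i'j'}$ with $i'+j'=i+j$), and $\omega_{ij}\mapsto x^iy^j$ defines a $G$-equivariant map $M_d\to\mathbb F[x,y]_d=\mathrm{Sym}^d(V_2)$. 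This map is bijective because no nonzero homogeneous polynomial vanishes on the affine Drinfeld curve: the defining polynomial $xy^q-x^qy-1$ has top-degree and constant parts of non-comparable degree, so a short degree argument shows it divides no nonzero homogeneous polynomial; bijectivity then follows by counting dimensions. Finally $W_k=M_{D_k}$: by the reduction formula (Lemma~\ref{reduction lemma}) and the remark after Definition~\ref{degree of a polydifferentials}, every holomorphic $\omega_{ij}$ of degree $k$ lies in $\Span_{\mathbb F}\mathcal S_k=W_k$, while conversely, using $xy^q-x^qy=1$ on $C$ one obtains $M_d\subseteq M_{d'}$ whenever $d\le d'\le m(q-2)$ and $d\equiv d'\pmod{q+1}$, so $\mathcal S_k\subseteq M_{D_k}$.

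Granting this, the composition factors of $H^0(C,\Omega_C^{\otimes m})$ are the union over $k=0,\dots,q$ of those of $\mathrm{Sym}^{D_k}(V_2)$. I would then invoke the well-known description of $\mathrm{Sym}^{D}(V_2)$ as an $\mathbb F[\mathrm{SL}_2(\mathbb F_p)]$-module: it is the Weyl module $V(D)$, whose simple constituents for the algebraic group are given by the Steinberg/linkage recipe in terms of the base-$p$ digits of $D$; restricting to the finite group turns Frobenius twists into the identity and reduces one to Clebsch--Gordan for tensor products of the $V_t$ (with the usual wrap-around producing projective summands, whose composition factors are those of the $P_{V_i}$ of Proposition~\ref{projective indecomposable F[G] modules}), and for $0\le D\le p-1$ one simply has $\mathrm{Sym}^{D}(V_2)=V_{D+1}$. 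Substituting the explicit values $D_k=k+(q+1)\lfloor(m(q-2)-k)/(q+1)\rfloor$ and summing the multiplicities over $k$ should yield the claimed formulas: the constant $1$ is the generic contribution of the single symmetric power whose ``top'' constituent is $V_i$, while $\sigma_{i-1}$ and $\sigma_{p-i}$ collect the extra constituents contributed by those $k$ for which $D_k$ falls in a wrap-around range.

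The main obstacle is this final counting: tracking, as $k$ ranges over $0,\dots,q$, exactly which $V_t$ (with multiplicity, including contributions of projective summands) occur in $\mathrm{Sym}^{D_k}(V_2)$, and collapsing the grand total into the compact expressions $1+\sigma_{i-1}+\sigma_{p-i}$; the divisibility conditions $2m+b-1\equiv 0\pmod p$ and the residues $\ell_{[0:1],\bullet}$ that already appeared in Theorem~\ref{decomposition as FB module, q=p} are what organise this bookkeeping. As an independent verification one may instead compute the Brauer character directly: since $m\ge 2$ forces $H^1(C,\Omega_C^{\otimes m})=0$ by Serre duality (as $\deg\Omega_C^{\otimes(m-1)}=(m-1)(2g(C)-2)>0$), the holomorphic Lefschetz formula (valid in characteristic $p$ for $p$-regular $g$, because $\langle g\rangle$ has order prime to $p$ and $C\to C/\langle g\rangle$ is tamely ramified) gives $\mathrm{tr}(g\mid H^0)=\sum_{gP=P}\mu_P^{m}/(1-\mu_P^{-1})$ with $\mu_P$ the eigenvalue of $g$ on the cotangent space; a non-split semisimple element has no fixed point on $C$, a split $\mathrm{diag}(\lambda,\lambda^{-1})$ with $\lambda\neq\pm 1$ fixes only $[1:0:0]$ and $[0:1:0]$ and yields $\sum_{k=-m}^{m}\lambda^{k}$, and $-I$ fixes the $q+1$ points of $C$ at infinity each with $\mu_P=-1$ and yields $(-1)^{m}(q+1)/2$, so decomposing this virtual character against the Brauer characters of the $V_t$ recovers the same answer.
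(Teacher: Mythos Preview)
Your route is genuinely different from the paper's, and the structural observation $W_k\cong\mathrm{Sym}^{D_k}(V_2)$ is correct (and not made in the paper): the map $\omega_{ij}\mapsto x^iy^j$ is $G$-equivariant by Lemma~\ref{action of element on holomorphic polydifferentials}, injective because $xy^q-x^qy-1$ cannot divide a nonzero homogeneous polynomial, and the inclusions $M_d\subseteq M_{d+q+1}$ together with Lemma~\ref{reduction lemma} give $W_k=M_{D_k}$. The paper instead works entirely in $K_0(G,\mathbb F)_{\mathbb Q}$: since $H^1=0$, it applies the equivariant Riemann--Roch formula of \cite[Thm.~3.1]{Kock_2004} (using the ramification filtration of Proposition~\ref{ramification information for P=[1:0:0]} at the unique ramified $G$-orbit), which expresses $[H^0]$ as $c\,[\mathbb F[G]]$ minus a weighted sum of classes $[\Ind_B^G(S_a)]$; the multiplicity of each $V_i$ then drops out from the known composition factors of $\mathbb F[G]$ (Proposition~\ref{projective indecomposable F[G] modules}) and of $\Ind_B^G(S_a)$ (Proposition~\ref{Ind_B^G(S_a) composition factors}), reducing to a single residue modulo $p-1$ that gives the $\sigma$-terms directly. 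Your Lefschetz/Brauer-character alternative is correct (the fixed-point data you list for $\pm I$, split and non-split semisimple elements are right) and is in fact the same computation as the paper's, phrased pointwise on conjugacy classes rather than packaged as K\"ock's identity in $K_0$. The one genuine gap in your proposal is exactly the step you flag: you never carry out the bookkeeping that turns either the symmetric-power composition factors or the Brauer-character values into the closed formulas $1+\sigma_{p-1}$ and $1+\sigma_{i-1}+\sigma_{p-i}$. That step is not hard once you have the right organising tool, but it is precisely what the paper's choice of the K\"ock formula supplies and what your sketch still owes the reader.
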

Before giving the first proof of this Theorem, we recall the lower ramification groups for the action of $G$ on $C$:
\begin{prop}
    \label{ramification information for P=[1:0:0]}
    Let ${q=p}$, and let ${P = [1:0:0] \in C}$. Then the lower ramification groups at $P$ are given by:
    $$
    {
        G_{P,i}
        =
        \left\{
            \begin{array}{ll}
                 B&\text{if }i \in \{-1,0\}  \\
                 U&\text{if }i \in [1,p+1]  \\
                 1&\text{if }i \geq p+2
            \end{array}
        \right. .
    }
    $$
\end{prop}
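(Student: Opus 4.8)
The plan is to determine the filtration step by step, starting from $G_{P,-1}$ and working downwards. First I would identify the decomposition group $G_{P,-1}$ with the set-theoretic stabiliser $\mathrm{Stab}_G(P)$ of $P$. From the action (\ref{drinfeld curve action}), an element $\sigma=\left(\begin{smallmatrix}\alpha&\beta\\\gamma&\delta\end{smallmatrix}\right)\in G$ sends $[1:0:0]$ to $[\alpha:\gamma:0]$, so $\sigma$ fixes $P$ precisely when $\gamma=0$; hence $\mathrm{Stab}_G(P)=B$. Since $\mathbb{F}$ is algebraically closed the residue field at $P$ is $\mathbb{F}$ itself, on which every element of $G$ acts trivially, so the inertia group coincides with the decomposition group: $G_{P,-1}=G_{P,0}=B$.

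Next I would pin down the wild inertia $G_{P,1}$, which is the unique Sylow-$p$ subgroup of $G_{P,0}=B$; as $q=p$ we have $|B|=p(p-1)$, so $G_{P,1}=U\cong\mathbb{Z}/p$. For each $i\geq 1$ the group $G_{P,i}$ is a subgroup of the cyclic group $U$ of prime order, and the filtration is non-increasing, so there is a single break $j\geq 1$ with $G_{P,i}=U$ for $1\leq i\leq j$ and $G_{P,i}=1$ for $i\geq j+1$. It remains only to show $j=p+1$.

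To find $j$ I would compute $i_G(u)=\Ord_P\bigl(u(t)-t\bigr)$ for the generator $u=\left(\begin{smallmatrix}1&1\\0&1\end{smallmatrix}\right)$ of $U$, using the uniformiser $t=Z/X$ at $P$ supplied by \cite[\S 3, Lem. 3.1]{lucas}. Writing $s=Y/X$ and dividing the defining equation by $X^{p+1}$ gives $s^p-s=t^{p+1}$; since $s(P)=0$ while $1-s^{p-1}$ is a unit at $P$, this forces $\Ord_P(s)=p+1$. The element $u$ acts on functions by $t\mapsto t/(1+s)$ (as in Lemma \ref{action of element on holomorphic polydifferentials}), whence $u(t)-t=-ts/(1+s)$ and, $1+s$ being a unit at $P$,
$$
\Ord_P\bigl(u(t)-t\bigr)=\Ord_P(t)+\Ord_P(s)=1+(p+1)=p+2.
$$
By the defining property $u\in G_{P,i}\Leftrightarrow i_G(u)\geq i+1$ this gives $j=p+1$; equivalently, this is exactly the jumping number $j_1([1:0:0])=p+1$ recorded in \cite[\S 2.1, Lem. 2.6]{lucas-mphil} and already invoked in Lemma \ref{D_j lemma}. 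Assembling the three steps yields the asserted filtration. None of the individual computations is hard; the only point needing a little care is the normalisation convention linking $i_G(u)$ to the index at which $G_{P,i}$ collapses (and the harmless observation that replacing $u$ by $u^{-1}$, should one's convention require it, changes neither $\Ord_P(u(t)-t)$ nor the break).
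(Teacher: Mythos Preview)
Your argument is correct. The logical skeleton matches the paper's: identify $G_{P,0}$ as the stabiliser $B$, use general ramification theory to see that $G_{P,1}$ is the Sylow-$p$ subgroup $U$ of $B$, and then determine the unique break in the filtration of the cyclic $p$-group $U$.

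Where you differ is in self-containment. The paper's proof is essentially a list of citations: it invokes \cite[\S 1.2, Lem.~1.25]{lucas-mphil} for $G_{P,0}=B$ and \cite[\S 2.1, Lem.~2.6]{lucas-mphil} for the fact that $U\subseteq G_{P,i}$ for $i\leq p+1$, and then appeals to the standard structure theory of ramification groups (as in \cite[\S \RN{4}]{serre}) to fill in the rest. You instead carry out the underlying computations directly: the stabiliser calculation from the explicit action~(\ref{drinfeld curve action}), and---the main content---the valuation $\Ord_P(u(t)-t)=p+2$ via the relation $s^p-s=t^{p+1}$ and $u(t)=t/(1+s)$. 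This last calculation is exactly what lies behind the jump $j_1([1:0:0])=p+1$ that the paper imports from \cite{lucas-mphil} (and already used in Lemma~\ref{D_j lemma}). So your proof is not a different route so much as an unpacking of the cited references into an explicit, elementary verification; the benefit is that your argument stands on its own, at the cost of a few extra lines.
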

\begin{proof}
    The fact that the lower ramification groups are as given in the Proposition statement follows from:
    \begin{itemize}
        \item ${G_{P,0} = G_{P} = B}$. This is given in \cite[\S 1.2, Lem. 1.25]{lucas-mphil} (this can also be seen directly).
        \item ${U\subseteq G_{P,i}}$ if and only if ${i\leq p+1}$. This is given in \cite[\S 2.1, Lem. 2.6]{lucas-mphil}.
        \item ${G_{P,0}/G_{P,1}}$ is a cyclic group of order prime to $p$, and for ${i\geq 1}$ the quotient ${G_{P,i}/G_{P,i+1}}$ is a product of cyclic groups of order $p$. This follows from general theory about ramification groups (see e.g. \cite[\S \RN{4}]{serre}).
    \end{itemize}
\end{proof}
\begin{rem}
    In fact, Proposition \ref{ramification information for P=[1:0:0]} holds for arbitrary ${q}$.
\end{rem}
\begin{proof}[Proof of Theorem \ref{composition factors of H^0}]
     If ${d_i}$ is the multiplicity of ${V_i}$ as a composition factor of ${H^0(C,\Omega_C^{\otimes m})}$, then
    $$
    {
        [H^0(C,\Omega_C^{\otimes m})] = \sum_{i=1}^{p}d_i[V_i] \text{ in }K_0(G,\mathbb{F})_\mathbb{Q}
    }
    $$
    where ${K_0(G,\mathbb{F})_\mathbb{Q}}$ denotes the rationalised Grothendieck group, with basis given by simple ${\mathbb{F}[G]}$ modules. The equivariant Euler characteristic of the sheaf of polydifferentials is given by
    $$
    {
        \chi(G,C,\Omega_{C}^{\otimes m}) =
        [H^0(C,\Omega_C^{\otimes m})] -
        [H^1(C,\Omega_C^{\otimes m})] \in K_0(G,\mathbb{F})_{\mathbb{Q}}.
    }
    $$
    Recall that the genus of $C$ is ${\frac{p(p-1)}{2}}$ (see Proposition \ref{genus of C}). Furthermore, ${p\geq 3}$ (see \S \ref{notation-section}). Thus, \\${\deg(\Omega_C^{\otimes m}) = m(2g(C)-2) > 2g(C)-2}$, from which it follows that ${H^1(C,\Omega_C^{\otimes m}) = 0}$ (see \cite[\S \RN{4}, Ex. 1.3.3]{hartshorne} and \cite[\S \RN{4}, Ex. 1.3.4]{hartshorne}). The equivariant Euler characteristic is therefore equal to ${[H^0(C,\Omega_C^{\otimes m})]}$. Finding the ${d_i}$ thus comes down to computing ${\chi(G,C,\Omega_C^{\otimes m})}$.\\
    \\
    By \cite[\S 3, Thm. 3.1]{Kock_2004} and using the fact that ${\Omega_C(P)}$ is given by the fundamental character ${\theta_P}$ at $P$ and that ${\deg(\Omega_C) = 2g(C)-2}$, we can write the equivariant Euler characteristic as
    \begin{equation}
        \label{Theorem 3.1 bernhard}
        \chi(G,C,\Omega_{C}^{\otimes m})
        =
        c[\mathbb{F}[G]]
        -
        \frac{1}{|G|}
        \sum_{P \in C}e_P^w
        \sum_{d=0}^{e_{P}^t-1}d \cdot
        [\Ind_{G_P}^{G}(\theta_P^{\otimes (m+d)})]
        \in K_0(G,\mathbb{F})_{\mathbb{Q}}
    \end{equation}
    where
    \begin{equation}
        \label{Theorem 3.1 constant c}
        c =
        (1-g_Y)
        + \frac{m(2g-2)}{|G|}
        - \frac{1}{2|G|}
        \sum_{P \in C}\left(
        (e_P^w-1)(e_P^t + 1)
        +
        \sum_{s\geq 2}(|G_{P,s}|-1)
        \right)
    \end{equation}
    and
    \begin{itemize}
        \item ${\theta_P}$ is the fundamental character of ${G_P}$ at $P$.
        \item ${g_Y}$ is the genus of ${X/G \cong \mathbb{P}^1}$, which is $0$.
        \item ${g = g(C)}$ is the genus of $C$, which for our curve is ${g(C) = \frac{p(p-1)}{2}}$ (see Proposition \ref{genus of C}).
        \item ${e_P^w}$ is ${|G_{P,1}|}$.
        \item ${e_P^t}$ is ${|G_P/G_{P,1}|}$.
    \end{itemize}
    Recall from \cite[\S 1.2, Lem. 1.28]{lucas-mphil}, the points of $C$ ramified under the action of $G$ are the points at infinity ${[X:Y:0]}$. Furthermore, these points are all in the same $G$-orbit. In particular, if ${Q = [X:Y:0]}$, ${G_{Q,i}}$ is conjugate to ${G_{P,i}}$ for all ${i\geq 0}$. By elementary representation theory, the fundamental representation ${\theta_Q}$ at $Q$ is obtained from the fundamental representation ${\theta_P}$ at $P$ via conjugation, and ${\Ind_{G_Q}^G(\theta_Q) \cong \Ind_{G_P}^G(\theta_P)}$. The fundamental representation at ${P = [1:0:0]}$ is ${S_{p-2}}$. To see this, note by Proposition \ref{ramification information for P=[1:0:0]} we need to compute the action of ${G_P = B}$ on ${\mathfrak{m}_{C,P}/\mathfrak{m}_{C,P}^2}$. The subgroup $U$ acts trivially. It remains to compute the action of $T$. A local parameter at ${P}$ is ${t = Z/X}$ (\cite[\S 1.2, Lem. 1.26]{lucas-mphil}). Note that \\${\left(
        \begin{array}{ll}
             \zeta&0  \\
             0&\zeta^{-1} 
        \end{array}
    \right)\cdot t = Z/(\zeta X) = \zeta^{-1}t = \zeta^{p-2}t}$, and so by definition of ${S_{p-2}}$ in Proposition \ref{parameterisation of F[B] modules}, the fundamental representation at ${P}$ is ${S_{p-2}}$. Using the fact that all the ramified points are in the same orbit as ${P = [1:0:0]}$ and using Proposition \ref{ramification information for P=[1:0:0]} we obtain
    \begin{align*}
        \frac{1}{|G|}
        \sum_{P \in C}e_P^w
        \sum_{d=0}^{e_{P}^t-1}d \cdot
        \left[\Ind_{B}^{G}(\theta_P^{\otimes (m+d)})\right]
        &=
        \frac{1}{p(p^2-1)}p(p+1)
        \sum_{d=0}^{p-2}d \cdot
        \left[\Ind_{B}^{G}(S_{(p-2)(m+d)})\right]\\
        &=\frac{1}{p-1}
        \sum_{d=0}^{p-2}d \cdot
        \left[\Ind_{B}^{G}(S_{(p-2)(m+d)})\right].
    \end{align*}
    Once again using Proposition \ref{ramification information for P=[1:0:0]} and the fact all points in $C$ ramified under $G$ are in the same orbit, we get
    $$
    {
        \sum_{P \in C}\left(
        (e_P^w-1)(e_P^t + 1)
        +
        \sum_{s\geq 2}(|G_{P,s}|-1)
        \right)
        =
        (p+1)
        \Big(
            (p-1)
            p
            +
            p
            (p-1)
        \Big)
        =
        2p(p^2-1).
    }
    $$
    Using ${|G| = p(p^2-1)}$ we hence obtain
    $$
    {
        c
        =
        1 +
        \frac{m(p(p-1) - 2)}{p(p^2-1)}
        -
        \frac{p(p^2-1)}{p(p^2-1)}
        =
        \frac{m(p(p-1) - 2)}{p(p^2-1)}
        =
        \frac{m}{p+1}
        -
        \frac{2m}{p(p^2-1)}.
    }
    $$
    Hence overall, we obtain
    $$
    {
        \chi(G,C,\Omega_{C}^{\otimes m})
        =
        \left(\frac{m}{p+1} - \frac{2m}{p(p^2-1)}\right)\left[\mathbb{F}[G]\right]
        -
        \frac{1}{p-1}
        \sum_{d=0}^{p-2}d \cdot
        \left[\Ind_{B}^{G}(S_{(p-2)(m+d)})\right].
    }
    $$
    In what follows, to obtain the composition factors of the regular representation ${\mathbb{F}[G]}$, we make repeated use of the fact that ${\mathbb{F}[G] \cong \bigoplus_{i=1}^{p}P_{V_i}^{\oplus i}}$ (see \cite[\S \RN{2}.5, Cor. 4]{localrep}) and Proposition \ref{projective indecomposable F[G] modules}.\\
    \\
    We first compute the multiplicity ${d_1}$. If ${p=3}$, the only indecomposable projective module containing ${V_1}$ as a composition factor is ${P_{V_1}}$, with multiplicitly ${3}$. Hence ${V_1}$ occurs $3$ times as a composition factor of ${\mathbb{F}[G]}$. If ${p > 3}$, there are two projective indecomposable modules which have ${V_1}$ as a composition factor: ${P_{V_1}}$ and ${P_{V_{p-2}}}$. In ${P_{V_1}}$ it occurs twice, in ${P_{V_{p-2}}}$ it occurs once, and therefore occurs ${2 + 1\cdot (p-2) = p}$ times in ${\mathbb{F}[G]}$. In general, we obtain that ${V_1}$ occurs $p$ times as a composition factor of ${\mathbb{F}[G]}$. Using Proposition \ref{Ind_B^G(S_a) composition factors}, ${V_1}$ occurs as a composition factor of ${\Ind_{B}^G(S_a)}$ only when ${a = 0}$, for which it occurs once. Thus, we wish to determine for which ${0\leq d\leq p-2}$ do we have ${(p-2)(m+d)\equiv 0\Mod{p-1}}$. This is equivalent to ${d \equiv -m\Mod{p-1}}$ for ${0\leq d\leq p-2}$, and hence ${d = -m - (p-1)\floor*{\frac{-m}{p-1}}}$. Therefore we obtain
    \begin{align*}
        d_1
        &=
        p\left(\frac{m}{p+1} - \frac{2m}{p(p^2-1)}\right)
        -
        \frac{1}{p-1}\left(-m-(p-1)\floor*{\frac{-m}{p-1}}\right)\\
        &=
        \frac{m(p-2)}{p-1} + \frac{m}{p-1} - \ceil*{\frac{m}{p-1}}
        \\
        &= m - \ceil*{\frac{m}{p-1}}\\
        &= 1 + \sigma_{p-1}.
    \end{align*}
    Next, ${V_p}$ has dimension $p$ and is both simple and projective. Thus ${V_p}$ occurs $p$ times as a composition factor of ${\mathbb{F}[G]}$. Furthermore, just as for ${V_1}$, ${V_p}$ only occurs in ${\Ind_{B}^G(S_a)}$ as a composition factor when ${a=0}$, for which it occurs once. Therefore, by the same calculation above, ${d_p = d_1 = 1 + \sigma_{p-1}}$.\\
    \\
    Finally, we consider ${V_i}$ for ${2 \leq i\leq p-1}$. We first assume that ${i \neq (p\pm 1)/2}$. The projective modules with ${V_i}$ as a composition factor are ${P_{V_{p-1-i}}}$, ${P_{V_i}}$ and ${P_{V_{p+1-i}}}$ (note that these modules are distinct since ${i\neq (p\pm 1)/2}$). For ${P_{V_i}}$ it occurs twice, while it occurs once for ${P_{V_{p-1-i}}}$ and ${P_{V_{p+1-i}}}$. Hence ${V_i}$ occurs as a composition factor of ${\mathbb{F}[G]}$ a total of ${p-1-i + 2i + p+1-i = 2p}$ times. It's easily verified the same result holds when ${i = (p\pm 1)/2}$. Next, ${V_i}$ occurs as a composition factor of ${\Ind_B^G(S_a)}$ when ${a = i-1}$ or ${a = p-i}$, for which it occurs once each (if ${p-i = i-1}$, then it occurs twice for that single value of $a$). Similar to before,
    \begin{align*}
        (p-2)(m+d)\equiv i-1\Mod{p-1}\text{ if and only if }d \equiv -m-i+1\Mod{p-1}.\\
        (p-2)(m+d)\equiv p-i\Mod{p-1}\text{ if and only if }d \equiv -m-p+i\Mod{p-1}.
    \end{align*}
    Thus, we obtain
    \begin{align*}
        d_i
        &
        =
        2p
        \left(
          \frac{m}{p+1}
          -
          \frac{2m}{p(p^2-1)}
        \right)
        -
        \frac{1}{p-1}\left(
            -m-i+1
            -
            (p-1)\floor*{\frac{-m-i+1}{p-1}}
        \right)\\
        &
        \ \ \ -
        \frac{1}{p-1}\left(
            -m-p+i
            -
            (p-1)\floor*{\frac{-m-p+i}{p-1}}
        \right)\\
        &=  \frac{2m(p-2)}{p-1} + \frac{m+i-1}{p-1}
        + \frac{m+p-i}{p-1}
        -
        \ceil*{\frac{m+i-1}{p-1}}
        - \ceil*{\frac{m+p-i}{p-1}} \\
        &=
        1 + 2m -
        \ceil*{\frac{m+i-1}{p-1}}
        - \ceil*{\frac{m+p-i}{p-1}} \\
        &= 1 + \sigma_{i-1} + \sigma_{p-i}.
    \end{align*}
\end{proof}
We now give our second proof of Theorem \ref{composition factors of H^0} via an explicit composition series.
\begin{proof}[Second proof of Theorem \ref{composition factors of H^0}]
    Let
    $$
    {
        \mathcal{T} = \{(i,j) \in \mathbb{Z}\times \mathbb{Z}: 0\leq j\leq p-1,\ 0\leq i\leq m(p-2)-j\} \cup \{(0,j) \in \mathbb{Z}\times \mathbb{Z}: p\leq j\leq m(p-2)\}.
    }
    $$
    We consider the natural action of $G$ on the affine coordinate ring ${R = \mathbb{F}[x,y]/(xy^p-x^py-1)}$ given by:
    $$
    {
        x^iy^j \cdot
        \left(
            \begin{array}{ll}
                 \alpha&\beta  \\
                 \gamma&\delta 
            \end{array}
        \right)
        = (\alpha x + \beta y)^i (\gamma x + \delta y)^j.
    }
    $$
    Note that a version of the reduction formula given in Lemma \ref{reduction lemma} holds in $R$ by simply replacing ${\omega_{ij}}$ with ${x^iy^j}$. From this it follows that ${S = \mathrm{span}_{\mathbb{F}}\{x^iy^j: (i,j) \in \mathcal{T}\}}$ is a submodule of ${R}$. Furthermore, it follows from Proposition \ref{basis for holomorphic polydifferentials} and Lemma \ref{action of element on holomorphic polydifferentials} that the natural map ${H^0(C,\Omega_{C}^{\otimes m})\to S}$ given by ${\omega_{ij} \mapsto x^iy^j}$ is an isomorphism of ${\mathbb{F}[G]}$ modules. We will give a composition series of ${S \cong H^0(C,\Omega_{C}^{\otimes m})}$. For ${0\leq r\leq m(p-2)}$, it follows from Lemma \ref{reduction lemma} that the following subspace of ${S}$ is an ${\mathbb{F}[G]}$ submodule:
    \begin{align}
        \label{X_r submodule definition}
        X_r := \Span_{\mathbb{F}}\{x^i y^j: (i,j) \in \mathcal{T}\text{ and }0\leq i+j\leq r\}.
    \end{align}
    Furthermore, these submodules form a filtration of $S$:
    $$
    {
        0\subset X_0 \subset X_1 \subset ... \subset X_{m(p-2)} = S.
    }
    $$
    For ${p\leq r\leq m(p-2)}$, let ${1\leq t_r\leq p-1}$ such that ${r\equiv t_r\Mod{p-1}}$. Define the subspace of ${X_r}$:
    \begin{align}
        \label{Y_r definition}
        Y_r&:=X_{r-1} + \Span_{\mathbb{F}}\{x^r, x^{r-1}y,..., x^{r-t_r+1}y^{t_r-1}; y^r\}\subseteq X_r.
    \end{align}
    We will prove in Proposition \ref{Y_r tilde is simple} that in fact, ${Y_r}$ is an ${\mathbb{F}[G]}$-submodule of ${X_r}$ and ${Y_r/X_{r-1} \cong V_{t_r+1}}$. In Proposition \ref{X_r/Y_r = V_(p-t_r)}, we will prove that ${X_r/Y_r \cong V_{p-t_r}}$. It's clear that for ${0\leq r\leq p-1}$, we have ${X_r/X_{r-1} \cong V_{r+1}}$. It follows that
    \begin{align}
        \label{composition series for H0}
        0\subset X_0 \subset X_1 \subset ... \subset X_{p-1}\subset Y_p \subset X_p \subset Y_{p+1}\subset X_{p+1} \subset ... \subset Y_{m(p-2)} \subset X_{m(p-2)} = S
    \end{align}
    is a composition series for ${S\cong H^0(C,\Omega_{C}^{\otimes m})}$. It remains to count how many times each ${V_i}$ occurs as a quotient in the above series. Since ${m(p-2) \geq p-1}$ for ${m\geq 2}$ and ${p\geq 3}$, each ${V_i}$ for ${i=1,...,p}$ shows up exactly once at the start ${0\subset X_0 \subset ... \subset X_{p-1}}$ of (\ref{composition series for H0}). In the remaining part of (\ref{composition series for H0}), ${V_1,V_p}$ occur exactly when ${t_r = p-1}$. For ${2 \leq i\leq p-1}$ we have ${t_r + 1 = i}$ if and only if ${t_r = i-1}$ and ${p-t_r = i}$ if and only if ${t_r = p-i}$. In other words,
    \begin{itemize}
        \item The number of times both ${V_1,V_p}$ occur in (\ref{composition series for H0}) is one more than the number of integers in ${[p,m(p-2)]}$ congruent to ${p-1\Mod{p-1}}$.
        \item For ${2\leq i\leq p-1}$ the number of times ${V_i}$ occurs in (\ref{composition series for H0}) is one more than the sum of:
        \begin{enumerate}
            \item the number of integers in ${[p,m(p-2)]}$ congruent to ${i-1\Mod{p-1}}$ and
            \item the number of integers in ${[p,m(p-2)]}$ congruent to ${p-i\Mod{p-1}}$.
        \end{enumerate}
    \end{itemize}
    Thus, applying the following Lemma \ref{l_i counting lemma} finishes the proof of Theorem \ref{composition factors of H^0}.
\end{proof}
\begin{lem}
    \label{l_i counting lemma}
    Let ${1\leq t\leq p-1}$. Then ${\sigma_{t}}$ as defined in equation (\ref{sigma_t}) is equal to the number of integers in the interval ${[p,m(p-2)]}$ congruent to ${t\Mod{p-1}}$.
\end{lem}
\begin{proof}
    Note that the smallest integer ${\geq p}$ equivalent to ${t\Mod{p-1}}$ is ${p-1+t}$. First assume that ${m(p-2) \geq p-1+t}$. Then the number of integers between ${p\leq r\leq m(p-2)}$ equivalent to ${p-1+t}$ modulo ${p-1}$ is given by
    $$
    {
        1 + \floor*{\frac{m(p-2)-(p-1+t)}{p-1}} =
        1 + \floor*{m - 1 - \frac{m+t}{p-1}} = m - \ceil*{\frac{m+t}{p-1}}
        = \sigma_t.
    }
    $$
    We now verify that if ${m(p-2) < p-1+t}$, we have ${\sigma_t = 0}$. Note that this implies ${m(p-2) - (p-1+t) < 0}$, and hence by using the left-hand side of the above expression we see ${\sigma_t \leq 0}$. By utilising the right-hand expression for ${\sigma_t}$, we see that
    $$
    {
        \sigma_t=
        m - \ceil*{\frac{m+t}{p-1}}
        \geq
        m - \ceil*{\frac{m+p-1}{p-1}}
        =
        m - 1 - \ceil*{\frac{m}{p-1}}
        \geq 0.
    }
    $$
    Therefore in this case ${\sigma_t = 0}$.
\end{proof}
We will now prove Propositions \ref{Y_r tilde is simple} and \ref{X_r/Y_r = V_(p-t_r)}. We need the following congruence:
\begin{lem}
    \label{glaishers congruence}
    Let ${r\geq 1}$, let ${1\leq t_r\leq p-1}$ such that ${r \equiv t_r\Mod{p-1}}$, and let ${1\leq l\leq p-1}$. Then
    $$
    {
        \sum_{k\geq 0}{r \choose l + k(p-1)} \equiv {t_r \choose l} \Modwb{p},
    }
    $$
    where we take ${{n\choose m} = 0}$ whenever ${m > n}$.
\end{lem}
\begin{proof}
    This is equivalent to Glaisher's congruence: see \cite[\S 1]{zwsun}.
\end{proof}
For the remainder of this section, denote the two elements
$$
{
    g =
    \left(
        \begin{array}{ll}
             1&1  \\
             0&1 
        \end{array}
    \right),\ h
    =
    \left(
        \begin{array}{ll}
             1&0  \\
             1&1 
        \end{array}
    \right)
}
$$
which are generators of ${G = SL_2(\mathbb{F}_p)}$ (see \cite[\S 2, Thm. 8.4]{gorenstein}).
\begin{lem}
    \label{action of g on x^{r-j}y^j}
    Let ${j \in [0,p-1] \cup \{r\}}$ and consider ${x^{r-j}y^j \in X_r}$. We have
    $$
    {
        x^{r-j}y^j\cdot g
        =
        \begin{dcases}
            y^r&\text{if }j=r\\
            \sum_{i=0}^{t_r-j-1}{t_r-j \choose i}x^{r-i-j}y^{i+j} + y^r + u_j,\ u_j \in X_{r-1}&\text{if }0\leq j\leq t_r-1\\
            \sum_{i=0}^{p-1-j}{p-1-j+t_r \choose i}x^{r-i-j}y^{i+j} + v_j,\ v_j \in Y_r&\text{if }t_r\leq j\leq p-1
        \end{dcases}.
    }
    $$
\end{lem}
\begin{proof}
    The case of ${j=r}$ is straightforward by definition. For ${0\leq j\leq p-1}$, we have
    $$
    {
        x^{r-j}y^j\cdot g
        =
        \sum_{i=0}^{r-j}{r-j \choose i}x^{r-j-i}y^{i+j}.
    }
    $$
    By using the reduction formula (Lemma \ref{reduction lemma}), the above sum gives the following coefficients of our basis elements in ${X_r \setminus X_{r-1}}$, along with some ${u_j \in X_{r-1}}$:
    \begin{align}
        &\text{Coefficient of }x^r:&&\begin{dcases}1&\text{if }j=0\\0&\text{otherwise}\end{dcases}\label{coeff_xr}\\
        &1\leq i\leq j-1,\ \text{coefficient of }x^{r-i}y^i:&&\sum_{0\leq p-1-j+i+k(p-1) < r-j}{r-j\choose p-1-j+i+k(p-1)}\label{coeff_firstsum}\\
       &\text{Coefficient of }x^{r-j}y^j: &&\sum_{0 \leq k(p-1) < r-j}{r-j \choose k(p-1)}\label{coeff_secondsum}\\
       &j+1\leq i\leq p-1,\ \text{coefficient of }x^{r-i}y^i:&& \sum_{0 \leq i-j+k(p-1) < r-j}{r-j \choose i-j+k(p-1)}\label{coeff_thirdsum}\\
       &\text{Coefficient of }y^r:&&{r-j\choose r-j} = 1 \label{coeff_yr}.
    \end{align}
    Throughout this proof, we make repeated use of Lemma \ref{glaishers congruence}, and thus we begin by remarking
    \begin{equation}
        \label{r-j congruence}
        r-j
        \equiv
        \left.
        \begin{dcases}
            t_r&\text{if }j=0\\
            t_r-j&\text{if }1\leq j\leq t_r-1\\
            p-1 + t_r-j&\text{if }t_r\leq j\leq p-1
        \end{dcases}
        \right\}\Mod{p-1},
    \end{equation}
    where the right-hand side takes values in ${\{1,2,...,p-1\}}$.\\
    \\
    We start by considering the first sum (\ref{coeff_firstsum}), which occurs only when ${j\geq 1}$. Note that this implies \\${1\leq p-1-j+i\leq p-1}$. Furthermore, there exists a ${k \in \mathbb{N}}$ such that ${p-1-j+i+k(p-1) = r-j}$ if and only if ${i = t_r}$, and so (\ref{coeff_firstsum}) becomes
    $$
    {
        \sum_{k\geq 0}{r-j\choose p-1-j+i+k(p-1)}
        -
        \begin{dcases}
            1&\text{if }i=t_r\\
            0&\text{otherwise}
        \end{dcases}.
    }
    $$
    After applying Lemma \ref{glaishers congruence} and (\ref{r-j congruence}), this becomes
    $$
    {
        \left.
        \begin{dcases}
            {t_r-j\choose p-1-j+i}&\text{if }1\leq j\leq t_r-1\\
            {p-1+t_r-j\choose p-1-j+i}&\text{if }t_r\leq j\leq p-1
        \end{dcases}
        \right\}
        -
        \begin{dcases}
            1&\text{if }i=t_r\\
            0&\text{otherwise}
        \end{dcases}.
    }
    $$
    Note it is only possible to have ${i=t_r}$ when ${t_r+1\leq j\leq p-1}$, since we assumed ${1\leq i\leq j-1}$. Hence the above simplifies to
    $$
    {
        \begin{dcases}
            {t_r-j\choose p-1-j+i}&\text{if }1\leq j\leq t_r-1\\
            {p-1+t_r-j\choose p-1-j+i}&\text{if }t_r\leq j\leq p-1\text{ and }i\neq t_r\\
            {p-1+t_r-j\choose p-1-j+t_r}-1&\text{if }t_r+1\leq j\leq p-1\text{ and }i=t_r
        \end{dcases}.
    }
    $$
    Note that since ${1\leq i\leq j-1}$, we have ${p-1+i\leq t_r}$, and hence ${p-1-j+i > t_r-j}$. Hence the top-most case above simplifies to $0$, and we obtain
    $$
    {
        \begin{dcases}
            {p-1+t_r-j\choose p-1-j+i}&\text{if }t_r\leq j\leq p-1\text{ and }i\neq t_r\\
            0&\text{otherwise}
        \end{dcases}.
    }
    $$
    Now we consider the second sum (\ref{coeff_secondsum}), and assume ${j\geq 1}$ (${j=0}$ is already covered by (\ref{coeff_xr})). Note that there exists a ${k \in \mathbb{N}}$ such that ${k(p-1) = r-j}$ if and only if ${j=t_r}$. Hence
    $$
    {
        \sum_{0 \leq k(p-1) < r-j}{r-j \choose k(p-1)}
        =
        1 + \sum_{k\geq 0}{r-j \choose p-1 + k(p-1)}
        -
        \begin{dcases}
            {r-j\choose r-j}=1&\text{if }j=t_r\\
            0&\text{otherwise}
        \end{dcases}
    }
    $$
    which, after applying Lemma \ref{glaishers congruence} gives
    $$
    {
        1
        +
        \begin{dcases}
            {t_r-j\choose p-1}&\text{if }1\leq j\leq t_r-1\\
            {p-1+t_r-j \choose p-1}&\text{if }t_r\leq j\leq p-1
        \end{dcases}
        -
        \begin{dcases}
            1&\text{if }j=t_r\\
            0&\text{otherwise}
        \end{dcases}.
    }
    $$
    The above always simplifies to $1$, which can be easily seen by taking cases. Hence the second sum simplifies to $1$. We now consider the final sum (\ref{coeff_thirdsum}). Since ${j+1\leq i\leq p-1}$ and ${0\leq j\leq p-1}$, we have ${1\leq i-j\leq p-1}$. Also, there exists a ${k \in \mathbb{N}}$ such that ${i-j + k(p-1) = r-j}$ if and only if ${i=t_r}$. Hence (\ref{coeff_thirdsum}) becomes
    $$
    {
        \sum_{k\geq 0}{r-j \choose i-j+k(p-1)}
        -
        \begin{dcases}
            1&\text{if }i=t_r\\
            0&\text{otherwise}
        \end{dcases}.
    }
    $$
    Applying Lemma \ref{glaishers congruence} and (\ref{r-j congruence}), the above becomes
    $$
    {
        \left.
        \begin{dcases}
            {t_r\choose i}&\text{if }j=0\\
            {t_r-j\choose i-j}&\text{if }1\leq j\leq t_r-1\\
            {p-1+t_r-j\choose i-j}&\text{if }t_r\leq j\leq p-1
        \end{dcases}
        \right\}
        -
        \begin{dcases}
            1&\text{if }i=t_r\\
            0&\text{otherwise}
        \end{dcases}.
    }
    $$
    Since ${j+1\leq i\leq p-1}$, this simplifies to
    $$
    {
        \left.
        \begin{dcases}
            {t_r\choose i}&\text{if }j=0\text{ and }i\neq t_r\\
            {t_r\choose t_r}-1&\text{if }j=0\text{ and }i=t_r\\
            {t_r-j\choose i-j}&\text{if }1\leq j\leq t_r-1\text{ and }i\neq t_r\\
            {t_r-j\choose t_r-j}-1&\text{if }1\leq j\leq t_r-1\text{ and }i=t_r\\
            {p-1+t_r-j\choose i-j}&\text{if }t_r\leq j\leq p-1\\
        \end{dcases}
        \right\}
        =
        \begin{dcases}
            0&\text{if }i=t_r\\
            {t_r-j\choose i-j}&\text{if }0\leq j\leq t_r-1\text{ and }i\neq t_r\\
            {p-1+t_r-j\choose i-j}&\text{if }t_r\leq j\leq p-1
        \end{dcases}.
    }
    $$
    Adding (\ref{coeff_xr})-(\ref{coeff_yr}), for ${0\leq j\leq p-1}$ we obtain
    $$
    {
        x^{r-j}y^{j}\cdot g=
        \begin{dcases}
            x^r + \sum_{i=1}^{t_r-1}{t_r\choose i}x^{r-i}y^{i}+y^r&\text{if }j=0\\
            x^{r-j}y^j + \sum_{i=j+1}^{t_r-1}{t_r-j\choose i-j}x^{r-i}y^{i}+ y^r&\text{if }1\leq j\leq t_r-1\\
            \sum_{i=1}^{t_r-1}{p-1\choose p-1-t_r+i}x^{r-i}y^i + x^{r-t_r}y^{t_r} + \sum_{i=t_r+1}^{p-1}{p-1\choose i-t_r}x^{r-i}y^{i}+y^r&\text{if }j=t_r\\
            \sum_{i=1}^{t_r-1}{p-1+t_r-j\choose p-1-j+i}x^{r-i}y^{i}
            +
            x^{r-j}y^j
            +
            \sum_{i=j+1}^{p-1}{p-1+t_r-j\choose i-j}x^{r-i}y^i+y^r&\text{if }t_r+1\leq j\leq p-1
        \end{dcases}
    }
    $$
    plus some ${u_j \in X_{r-1}}$. We note that the top two and bottom two cases can be combined, and defining ${v_j := \sum_{i=1}^{t_r-1}{p-1+t_r-j\choose p-1-j+i}x^{r-i}y^{i} + y^r + u_j \in Y_r}$ for the bottom two cases gives us
    $$
    {
        x^{r-j}y^{j}\cdot g=
        \begin{dcases}
            \sum_{i=0}^{t_r-j-1}{t_r-j \choose i}x^{r-i-j}y^{i+j} + y^r + u_j,\ u_j \in X_{r-1}&\text{if }0\leq j\leq t_r-1\\
            \sum_{i=0}^{p-1-j}{p-1-j+t_r \choose i}x^{r-i-j}y^{i+j} + v_j,\ v_j \in Y_r&\text{if }t_r\leq j\leq p-1
        \end{dcases},
    }
    $$
    as claimed in Lemma \ref{action of g on x^{r-j}y^j}.
\end{proof}
To state Propositions \ref{Y_r tilde is simple} and \ref{X_r/Y_r = V_(p-t_r)}, we will use the explicit description of ${V_t}$ as given in Proposition \ref{simple F[G] modules}.
\begin{prop}
    \label{Y_r tilde is simple}
    ${Y_r}$ is an ${\mathbb{F}[G]}$ submodule of ${X_r}$, and ${Y_r/X_{r-1} = \Span_{\mathbb{F}}\{\tilde{x}^r,...,\tilde{x}^{r-t_r+1}\tilde{y}^{t_r-1};\tilde{y}^r\} \cong V_{t_r + 1}}$, under the linear map ${\phi_r : Y_r/X_{r-1} \to V_{t_r+1}}$ determined by
    $$
    {
        \phi_{r}(\tilde{x}^{r-j}\tilde{y}^j)
        =
        \left\{
            \begin{array}{ll}
                 \mathcal{X}^{t_r-j}\mathcal{Y}^j&\text{if }j\neq r  \\
                 \mathcal{Y}^{t_r}&\text{if }j=r 
            \end{array}
        \right. .
    }
    $$
\end{prop}
\begin{proof}
    The map ${\phi_r}$ is clearly bijective. It remains to show that ${Y_r}$ is a submodule of ${X_r}$ and that ${\phi_r}$ is a $G$-map. For ${0\leq j\leq t_r-1}$, it's clear that ${x^{r-j}y^j\cdot h \in Y_r}$ and ${\phi_r(\tilde{x}^{r-j}\tilde{y}^j\cdot h) = \phi_r(\tilde{x}^{r-j}\tilde{y}^j)\cdot h}$. Furthermore, because ${y^r\cdot h = x^r\cdot g}$ and ${y^{t_r}\cdot h = x^{t_r}\cdot g}$, it suffices to verify that ${Y_r}$ is closed under the action of $g$, and that ${\phi_r}$ preserves the action of $g$. By Lemma \ref{action of g on x^{r-j}y^j}, when ${0\leq j\leq t_r-1}$ we have
    $$
    {
        x^{r-j}y^j\cdot g=
        \sum_{i=0}^{t_r-j-1}{t_r-j \choose i}x^{r-i-j}y^{i+j} + y^r + u_j,\ u_j \in X_{r-1},
    }
    $$
    from which we see that ${x^{r-j}y^j\cdot g \in Y_r}$. Passing to the quotient ${Y_r/X_{r-1}}$ and applying ${\phi_r}$ then gives
    $$
    {
        \phi_r(\tilde{x}^{r-j}\tilde{y}^j\cdot g)
        =
        \sum_{i=0}^{t_r-j}{t_r-j\choose i}\mathcal{X}^{t_r-j-i}\mathcal{Y}^{j+i}
        =
        \phi_{r}(\tilde{x}^{r-j}\tilde{y}^j)\cdot g.
    }
    $$
    It remains to consider the final basis element ${y^r}$. We have ${y^r\cdot g = y^r}$, and ${\phi_r(\tilde{y}^r\cdot g) = \mathcal{Y}^{t_r} = \mathcal{Y}^{t_r}\cdot g}$. We conclude ${Y_r}$ is a submodule of ${X_r}$, that ${\phi_r}$ is a ${G}$-map, and that ${Y_r/X_{r-1} \cong V_{t_r+1}}$.
\end{proof}
We will now also show that ${X_r/Y_r}$ is isomorphic to ${V_{p-t_r}}$. For this, we need the following:
\begin{lem}
    \label{j choose tr lemma}
    Let ${1\leq t\leq p-1}$, and let ${t\leq j\leq p-1}$. Then
    $$
    {
        \frac{{p-1-t \choose j-t}}{{p-1 \choose j-t}} = {j \choose t}\text{ in }\mathbb{F}_p.
    }
    $$
\end{lem}
\begin{proof}
    Note that
    $$
    {
        \frac{{p-1-t \choose j-t}}{{p-1 \choose j-t}}
        =
        \frac{(p-1-t)!(p-1-j+t)!}{(p-1)!(p-1-j)!}
        =
        \frac{\prod_{i=1}^{t}(p-1-j+i)}{\prod_{i=1}^{t}(p-1-t+i)}.
    }
    $$
    Reducing the numerator and denominator modulo $p$ we obtain
    $$
    {
        \frac{\prod_{i=1}^{t}(-1-j+i)}{\prod_{i=1}^{t}(-1-t+i)}
        =
        \frac{\prod_{i=1}^{t}(j-i+1)}{\prod_{i=1}^{t}(t-i+1)}
        =
        \frac{j!}{t!(j-t)!}
        =
        {j \choose t}.
    }
    $$
\end{proof}
\begin{prop}
    \label{X_r/Y_r = V_(p-t_r)}
    For ${p\leq r\leq m(p-2)}$, we have that ${X_r/Y_r = \Span_{\mathbb{F}}\{\tilde{x}^{r-t_r}\tilde{y}^{t_r},...,\tilde{x}^{r-(p-1)}\tilde{y}^{p-1}}\}$ is ${\mathbb{F}[G]}$-isomorphic to ${V_{p-t_r} = \Span_{\mathbb{F}}\{x^{p-t_r-1},...,y^{p-t_r-1}}\}$ under the bijective ${\mathbb{F}}$-linear map determined by
    $$
    {
        \lambda_r(\tilde{x}^{r-j}\tilde{y}^j)
        =
        {j \choose t_r}\mathcal{X}^{p-1-j}\mathcal{Y}^{j-t_r}\text{ for }j\in \{t_r,...,p-1\}.
    }
    $$
\end{prop}
\begin{proof}
    We first consider the action of ${h}$. Note that
    $$
    {
        \tilde{x}^{r-j}\tilde{y}^j\cdot h =
        \sum_{i=0}^{j}{j \choose i}\tilde{x}^{r-j+i}\tilde{y}^{j-i} = \sum_{i=0}^{j-t_r}{j \choose i}\tilde{x}^{r-j+i}\tilde{y}^{j-i}
    }
    $$
    where for the latter equality we have used the fact that in the quotient module ${X_r/Y_r}$, if ${j-i < t_r}$ then ${\tilde{x}^{r-j+i}\tilde{y}^{j-i} = 0}$. Applying ${\lambda_r}$, we obtain
    $$
    {
        \lambda_r(\tilde{x}^{r-j}\tilde{y}^j\cdot h) =
        \sum_{i=0}^{j-t_r}{j \choose i}{j-i \choose t_r}\mathcal{X}^{p-1-j+i}\mathcal{Y}^{j-i-t_r}.
    }
    $$
    By using the fact that ${{j \choose i}{j-i \choose t_r} = {j\choose t_r}{j-t_r \choose i}}$, we obtain
    $$
    {
        \lambda_r(\tilde{x}^{r-j}\tilde{y}^j\cdot h) =
        {j \choose t_r}\sum_{i=0}^{j-t_r}{j-t_r\choose i}\mathcal{X}^{p-1-j+i}\mathcal{Y}^{j-t_r-i} =
        {j \choose t_r} \mathcal{X}^{p-1-j}\mathcal{Y}^{j-t_r}\cdot h
        =
        \lambda_r({\tilde{x}^{r-j}\tilde{y}^j})\cdot h.
    }
    $$
    We now consider the action of ${g}$. By Lemma \ref{action of g on x^{r-j}y^j}, we have
    $$
    {
        \tilde{x}^{r-j}\tilde{y}^j \cdot g =
        \sum_{i=0}^{p-1-j}{p-1-j+t_r \choose i}\tilde{x}^{r-i-j}\tilde{y}^{i+j}.
    }
    $$
    Applying ${\lambda_r}$, we get
    $$
    {
        \lambda_r(\tilde{x}^{r-j}\tilde{y}^j\cdot g)
        =
        \sum_{i=0}^{p-1-j}
          {p-1-j+t_r \choose i}
          {i+j \choose t_r}
          \mathcal{X}^{p-1-j-i}\mathcal{Y}^{j-t_r+i}
    }
    $$
    which, after applying Lemma \ref{j choose tr lemma}, can be written as
    $$
    {
        =
        \sum_{i=0}^{p-1-j}{p-1-j+t_r \choose i}
            \frac{{p-1-t_r \choose i+j-t_r}}{{p-1 \choose i+j-t_r}}
          \mathcal{X}^{p-1-j-i}\mathcal{Y}^{j-t_r+i}.
    }
    $$
    Since ${{p-1-j+t_r \choose i}\frac{{p-1-t_r \choose i+j-t_r}}{{p-1 \choose i+j-t_r}} = {p-1-j \choose i}
    \frac{{p-1-t_r \choose j-t_r}}{{p-1 \choose j-t_r}}}$, we get
    $$
    {
        \lambda_r(\tilde{x}^{r-j}\tilde{y}^j\cdot g)
        =
        \frac{{p-1-t_r \choose j-t_r}}{{p-1 \choose j-t_r}}
        \sum_{i=0}^{p-1-j}{p-1-j \choose i}
          \mathcal{X}^{p-1-j-i}\mathcal{Y}^{j-t_r+i}
        =
        \frac{{p-1-t_r \choose j-t_r}}{{p-1 \choose j-t_r}} \mathcal{X}^{p-1-j}\mathcal{Y}^{j-t_r}\cdot g
        = \lambda_r(\tilde{x}^{r-j}\tilde{y}^{j})\cdot g
    }
    $$
    where the final equality comes from Lemma \ref{j choose tr lemma}. This completes the proof that ${\lambda_r}$ is a ${G}$-map.
\end{proof}

\section{Appendix}
In this appendix, we will compute the \textit{p-rank} and \textit{a-number} of the Drinfeld curve for arbitrary ${q=p^r}$. As explained in the introduction of \cite{Pries_2007}, these two important non-negative integral invariants are related to the $p$-torsion of the Jacobian of the curve. The first author would like to thank the anonymous referee of \cite{lucas} for raising the problem of computing the $a$-number. We start by giving a definition of the \textit{Cartier operator}.
\begin{defn}
    \label{cartier operator definition}
    Let $K$ be a function field in one variable defined over a perfect field $k$ of positive characteristic $p$. Let $x$ be a separating variable. Each function ${z \in K}$ can be written in a unique way as
    $$
    {
        z = z_0^p + z_1^px + ... + z_{p-1}^{p}x^{p-1},
    }
    $$
    where ${z_0,...,z_{p-1} \in K}$ (see the exercises at the end of \cite[\S 4]{stichtenoth2009}). The Cartier operator on differential forms, which we denote in this section by ${\mathcal{C}}$, is then defined by
    $$
    {
        \mathcal{C}(zdx) = z_{p-1}dx.
    }
    $$
\end{defn}
It is explained at the end of \cite[\S 4]{stichtenoth2009} that the above definition of ${\mathcal{C}}$ is independent from the choice of separating variable. Furthermore, ${\mathcal{C}}$ sends globally holomorphic differentials of the corresponding curve to globally holomorphic differentials, and is ${(1/p)}$-linear: ${\mathcal{C}(z^p\omega) = z\mathcal{C}(\omega)}$. We can now give a definition of the ${p}$-rank:
\begin{defn}
    \label{p-rank definition}
    The ${p}$-rank of $C$ is ${\dim_{\mathbb{F}}\{\omega \in H^0(C,\Omega_C): \mathcal{C}(\omega) = \omega\}}$, where ${\mathcal{C}}$ is the Cartier operator associated with the function field of $C$. A curve with a ${p}$-rank of $0$ is called \textit{supersingular}.
\end{defn}
\begin{prop}
    \label{p-rank of drinfeld curve}
    The Drinfeld curve $C$ is supersingular.
\end{prop}
\begin{proof}
    The Deuring–Shafarevich formula (for example, see \cite{ds_formula_resource}) states that if ${\mathcal{U}}$ is a $p$-subgroup of ${\Aut(C)}$, then
    $$
    {
        \gamma - 1 = |\mathcal{U}|(\overline{\gamma}-1) + \sum_{P \in C}\left(|\mathcal{U}|-\left|\frac{\mathcal{U}}{\mathcal{U}_P}\right|\right)
    }
    $$
    where: ${\gamma}$ is the $p$-rank of $C$ and ${\overline{\gamma}}$ is the $p$-rank of the quotient curve ${C/\mathcal{U}}$. Let ${\mathcal{U} = U}$ to be the subgroup of $G$ of upper uni-triangular matrices. By Proposition \ref{the quotient C/U}, we have ${C/U \cong \mathbb{P}^1(\mathbb{F})}$, which has genus $0$. Hence, the $p$-rank of ${\mathbb{P}^1(\mathbb{F})}$ is $0$. By Proposition \ref{the quotient C/U}, the only point ${P \in C}$ with ${U_P\neq 1}$ is ${P = [1:0:0]}$, with ${U_P = U}$. Thus, we get ${\gamma - 1 = q(-1) + (q-1) = -1}$, and hence ${\gamma = 0}$.
\end{proof}
We now go on to define the ${a}$-rank, and compute it for the Drinfeld curve.
\begin{defn}
    \label{a-number definition}
    The $a$-number of $C$ is ${\dim_{\mathbb{F}}\{\omega \in H^0(C,\Omega_C): \mathcal{C}(\omega) = 0\}}$.
\end{defn}
For ${i\in \mathbb{Z}}$, let ${0\leq r_i \leq p-1}$ such that ${i\equiv r_i \Modwb{p}}$. Then, define ${\phi : \mathbb{Z}^2 \to \mathbb{Z}^2}$ by
\begin{equation}
    \label{phi definition for a-number}
    \phi(i,j)
    =
    \left(
    \phi(i,j)_1,\ \phi(i,j)_2
    \right)
    =
    \left(
        \frac{i-r_i+q(p-1-r_j)}{p},\ 
        \frac{j-r_j+q(p-1-r_i)}{p}
    \right).
\end{equation}

\begin{lem}
    \label{cartier operator prop}
    Consider ${\omega_{ij} \in H^0(C,\Omega_C)}$ (as in Proposition \ref{basis for holomorphic polydifferentials}). Let ${r_i,r_j}$ be defined as above. If ${r_i + r_j \leq p-2}$, then ${\mathcal{C}(\omega_{ij}) = 0}$. Otherwise,
    $$
    {
        \mathcal{C}(\omega_{ij})
        =
        (-1)^{r_i + r_j - p + 1} {r_j \choose {p-1-r_i}}
        \omega_{
            \phi(i,j)
        }
    }
    $$
    where ${\phi(i,j)}$ is defined as in (\ref{phi definition for a-number}); furthermore, ${\phi(i,j)_1,\phi(i,j)_2 \geq 0}$ and ${\phi(i,j)_1 + \phi(i,j)_2 \leq q-2}$ (in other words, ${\omega_{\phi(i,j)}}$ is a basis element as in Proposition \ref{basis for holomorphic polydifferentials}).
\end{lem}
\begin{proof}
    We can write
    $$
    {
        \omega_{ij}
        =
        \left(\frac{x^{\frac{i-r_i}{p}}y^{\frac{j-r_j}{p}}}{x^{q/p}}\right)^p x^{r_i}y^{r_j} dx.
    }
    $$
    We have that $x$ is a separating element. To see this, note that ${\mathbb{F}(C) = \text{Frac}(\mathbb{F}[x,y]/(xy^q-x^qy-1)) = \mathbb{F}(x,y)}$, and that ${y \in \mathbb{F}(C)}$ is separable over ${\mathbb{F}(x)}$. Using the fact ${\mathcal{C}}$ is ${(1/p)}$-linear, we deduce that
    $$
    {
        \mathcal{C}(\omega_{ij})
        =
        \left(\frac{x^{\frac{i-r_i}{p}}y^{\frac{j-r_i}{p}}}{x^{q/p}}\right) \mathcal{C}(x^{r_i}y^{r_j} dx).
    }
    $$
    It remains to compute ${\mathcal{C}(x^{r_i}y^{r_j} dx)}$. Recall that ${x,y}$ are related by ${xy^q - x^qy - 1 = 0}$. Rearranging, we get
    $$
    {
        y = \left(-\frac{1}{x^{q/p}}\right)^p + \left(\frac{y^{q/p}}{x^{q/p}}\right)^p x.
    }
    $$
    If ${r_i + r_j \leq p-2}$, then the coefficient of ${x^{p-1}}$ in the expansion of ${x^{r_i}y^{r_j}}$ is $0$. Thus ${\mathcal{C}(x^{r_i}y^{r_j}dx) = 0}$ in this case, and so ${\mathcal{C}(\omega_{ij}) = 0}$. Otherwise, we obtain
    $$
    {
        y^{r_j}
        =
        \sum_{k=0}^{r_j}
            {r_j \choose k}
            \left((-1)^{r_j-k}\frac{1}{x^{q(r_j-k)/p}}\right)^{p}
            \left(\frac{y^{qk/p}}{x^{qk/p}}\right)^{p}
            x^{k}
        =
        \sum_{k=0}^{r_j}
            {r_j \choose k}
            \left(
                (-1)^{r_j-k}
                \frac{y^{qk/p}}{x^{qr_j/p}}
            \right)^{p}
            x^{k}
    }
    $$
    hence
    $$
    {
        x^{r_i}y^{r_j}
        =
        \sum_{k=0}^{r_j}
            {r_j \choose k}
            \left(
                (-1)^{r_j-k}
                \frac{y^{qk/p}}{x^{qr_j/p}}
            \right)^{p}
            x^{k+r_i}.
    }
    $$
    Note that ${x^{k + r_i}}$ contributes to ${\mathcal{C}(\omega_{ij})}$ if and only if ${k + r_i \equiv (p-1)\Modwb{p}}$. Since ${r_i + r_j \leq 2p-2}$, this happens only when ${k + r_i = p-1}$, i.e. ${k = p-1-r_i}$. Thus, we overall deduce that
    $$
    {
        \mathcal{C}(x^{r_i}y^{r_j}dx)
        =
        {r_j \choose p-1-r_i}
            \left(
                (-1)^{r_i + r_j-p+1}
                \frac{y^{q(p-1-r_i)/p}}{x^{qr_j/p}}
            \right)
        dx.
    }
    $$
    Finally, we get
    \begin{align*}
      \mathcal{C}(\omega_{ij})
        &=
        \left(\frac{x^{\frac{i-r_i}{p}}y^{\frac{j-r_i}{p}}}{x^{q/p}}\right) \mathcal{C}(x^{r_i}y^{r_j} dx)
        =
        (-1)^{r_i + r_j-p+1}
        {r_j \choose p-1-r_i}
        \left(\frac{x^{\frac{i-r_i}{p}}y^{\frac{j-r_j}{p}}}{x^{q/p}}\right)
        \left(
                \frac{y^{q(p-1-r_i)/p}}{x^{qr_j/p}}
        \right)
        dx\\
        &= (-1)^{r_i + r_j-p+1}
        {r_j \choose p-1-r_i}
        \left(
                \frac{x^{\frac{i-r_i + q(p-1-r_j)}{p}}y^{\frac{j-r_j+q(p-1-r_i)}{p}}}{x^{q}}
        \right) \\
        &= (-1)^{r_i + r_j-p+1}
        {r_j \choose p-1-r_i}\omega_{\phi(i,j)}.
    \end{align*}
    It's clear that ${\phi(i,j)_1,\phi(i,j)_2\geq 0}$. It remains to show ${\phi(i,j)_1 + \phi(i,j)_2 \leq q-2}$. For this, first note that this is equivalent to showing
    $$
    {
        (i + j) - (r_i + r_j)
        + q(2(p-1) - (r_i + r_j))
        \leq qp-2p.
    }
    $$
    To show the above inequality is true, we will consider the largest possible value of ${i+j}$ given a value of ${r_i + r_j}$ and show that the inequality holds in those cases. Recall that ${p-1\leq r_i + r_j\leq 2p-2}$ by assumption. The largest value of ${i+j}$ when ${r_i + r_j = 2p-2}$ is ${i + j = q-2}$. The largest value of ${i+j}$ when ${r_i + r_j = 2p-3}$ is thus ${i + j = q-3}$. In general, for ${0\leq k\leq p-1}$, the largest value of ${i+j}$ when ${r_i + r_j = 2p - 2 - k}$ is ${i + j = q - 2 - k}$. Thus, for a given ${r_i + r_j = 2p-2-k}$, the left-hand side of our inequality is bounded by
    \begin{align*}
        &(i + j) - (r_i + r_j)
        + q(2(p-1) - (r_i + r_j))\\
        \leq&(q-2-k) - (2p-2-k) + q(2(p-1) - (2p-2-k))\\
        =&q-2-2p+2 + qk\\
        =&q(k+1)-2p\\
        \leq& qp-2p.
    \end{align*}
\end{proof}

\begin{thm}
    \label{a-number computation}
    The $a$-number of the Drinfeld curve $C$ is ${\frac{q(q+p)(p-1)}{4p}}$.
\end{thm}
Note that if ${q=p}$, we obtain ${{p \choose 2} = g(C)}$.
\begin{proof}
    Let ${\mathcal{W} := \{(i,j): i,j\geq 0,\ 0\leq i+j\leq q-2\}}$. Our first step is to show that ${\phi|_{\mathcal{W}} : \mathcal{S} \to \mathbb{Z}^2}$ is injective. Assume that ${\phi(i,j) = \phi(k,w)}$. Write ${i = m_ip + r_i}$ and ${k = m_kp + r_k}$. Note then that
    $$
    {
        0
        =f
        \phi(i,j)_1 - \phi(k,w)_1
        =
        \frac{m_ip - m_kp + q(r_w - r_j)}{p}
    }
    $$
    hence
    $$
    {
        m_ip - m_kp = q(r_j - r_w).
    }
    $$
    Since both ${m_ip,m_kp}$ are less than $q$ (as ${i,k \leq q-2}$), we have ${-q < m_ip - m_kp < q}$. Thus, we deduce ${r_j = r_w}$ and ${m_ip = m_kp}$. Applying the same argument to the pair ${(j,w)}$, we deduce that ${r_i = r_k}$ and ${m_jp = m_wp}$, and hence ${i=k}$ and ${j = w}$. Overall, ${\phi|_{\mathcal{W}}}$ is injective.\\
    \\
    From Lemma \ref{cartier operator prop} and the injectivity of ${\Phi|_\mathcal{W}}$ we conclude that ${\ker(\mathcal{C})}$ is spanned by \\${\{\omega_{ij}: (i,j) \in \mathcal{W},\ r_i + r_j\leq p-2\}}$. Hence, the $a$-number is equal to
    $$
    {
        \#
        \{
            (i,j) \in \mathbb{Z}^2:\ 
            0\leq i,j\leq q-2,\ 0\leq i+j\leq q-2,\ r_i + r_j \leq p-2
        \}.
    }
    $$
    This can be computed as the sum:
    \begin{equation}
        \label{sum for computing the order of S1}
        \sum_{r_i=0}^{p-2}
        \sum_{r_j=0}^{p-2-r_i}
        \sum_{n_i=0}^{\floor*{(q-2-r_j-r_i)/p}}
        \sum_{n_j=0}^{\floor*{(q-2-r_j-r_i-n_ip)/p}}
        1.
    \end{equation}
    To see this, write ${i = n_ip + r_i}$ and ${j = n_jp + r_j}$. For a given choice of ${0\leq r_i\leq p-2}$, there are ${p-2-r_i}$ choices for ${r_j}$. Then, since we require ${i + j \leq q-2}$, there are ${\floor*{\frac{q-2-r_j-r_i}{p}}}$ choices for ${n_i}$. After choosing ${n_i}$, using ${i+j\leq q-2}$ once again there are ${\floor*{\frac{q-2-r_j-r_i-n_ip}{p}}}$ choices for ${n_j}$.\\
    \\
    We now simplify the sum. First of all, we simplify the bounds of the inner two sums. Note that because of the ranges of ${r_i,r_j}$, ${\ceil*{\frac{r_i+r_j+2}{p}} = 1}$. Hence,
    $$
    {
        \floor*{\frac{q-2-r_j-r_i}{p}} =
        \frac{q}{p} - \ceil*{\frac{r_i+r_j+2}{p}} = \frac{q}{p} - 1.
    }
    $$
    Also,
    $$
    {
        \floor*{\frac{q-2-r_j-r_i - n_ip}{p}}
        =
        \frac{q}{p} - n_i - \ceil*{\frac{r_i+r_j+2}{p}} = \frac{q}{p} - n_i - 1.
    }
    $$
    Hence, (\ref{sum for computing the order of S1}) becomes
    \begin{align*}
        &\sum_{r_i=0}^{p-2}
        \sum_{r_j=0}^{p-2-r_i}
        \sum_{n_i=0}^{q/p - 1}
        \sum_{n_j=0}^{q/p - n_i - 1}
        1\\
        =&\left(
            \sum_{r_i=0}^{p-2}
            \sum_{r_j=0}^{p-2-r_i}1
        \right)
        \left(
            \sum_{n_i=0}^{q/p - 1}
            \sum_{n_j=0}^{q/p - n_i - 1}1
        \right)\\
        =&\left(
            \sum_{r_i=0}^{p-2}
            (p-1-r_i)
        \right)
        \left(
            \sum_{n_i=0}^{q/p - 1}
            \left(\frac{q}{p}-n_i\right)
        \right)\\
        =&\frac{1}{2}p\left(p-1\right)\cdot \frac{1}{2}\frac{q}{p}\left(\frac{q}{p}+1\right)\\
        =&\frac{q(q+p)(p-1)}{4p}.
    \end{align*}
\end{proof}

\section{Bibliography}
\printbibliography[heading=none]

\end{document}